\documentclass[12pt]{amsart}
\usepackage{amsthm}
\newtheorem{theorem}{Theorem}[section]
\newtheorem{lemma}[theorem]{Lemma}
\newtheorem{proposition}[theorem]{Proposition}
\theoremstyle{definition}
\newtheorem{definition}[theorem]{Definition}
\newtheorem{assumption}[theorem]{Assumption}

\newtheorem{example}[theorem]{Example}
\theoremstyle{remark}
\newtheorem{remark}[theorem]{Remark}
\newtheorem{corollary}[theorem]{Corollary}

\counterwithin*{section}{part}

\usepackage{amssymb}
\usepackage{empheq}
\usepackage{stmaryrd}
\usepackage{enumerate}
\usepackage[shortlabels]{enumitem}
\usepackage{calc}
\usepackage{url}
\usepackage{comment}

\newcommand{\norm}[1]{\left\lVert#1\right\rVert}

\def\EE{\mathbb{E}}
\def\NN{\mathbb{N}}
\def\PP{\mathbb{P}}

\newcommand{\CAT}{{\rm{CAT}(0)}}

\usepackage[left=2.0cm,%
                right=2.0cm,%
                top=2.5cm,%
                bottom=3.5cm,%
                headheight=12pt,%
                a4paper]{geometry}%

\begin{document}

\title[Convergence guarantees for stochastic algorithms in metric spaces]{Convergence guarantees for stochastic algorithms solving non-unique problems in metric spaces}

\author[N. Pischke and T. Powell]{Nicholas Pischke and Thomas Powell}
\date{\today}
\maketitle
\vspace*{-5mm}
\begin{center}
{\scriptsize 
Department of Computer Science, University of Bath,\\
Claverton Down, Bath, BA2 7AY, United Kingdom,\\
E-mails: $\{$nnp39,trjp20$\}$@bath.ac.uk}
\end{center}

\maketitle
\begin{abstract}
We prove a general quantitative theorem on the asymptotic behavior of stochastic quasi-Fej\'er monotone sequences in a broad metric context.  Concretely, our result explicitly constructs a rate of convergence for such process, both in mean and almost surely, under an abstract stochastic regularity assumption, derived from previous work of Kohlenbach, L\'opez-Acedo and Nicolae [Isr.\ J.\ Math.\ 232(1), pp.\ 261-297, 2019] on such notions in a deterministic context. Our notion of regularity extends and unifies many common conditions from the literature, such as generalized contractivity for self maps, weak sharp minima and error bounds for real-valued functions, uniform monotonicity and global metric subregularity for set-valued operators, related Polyak-{\L}ojasiewicz or Kurdyka-{\L}ojasiewicz conditions, as well as expected sharp growth as e.g.\ studied by Asi and Duchi [SIAM J.\ Optim.\ 29(3), pp.\ 2257-2290, 2019]. The rate is moreover highly uniform, depending only on very few data of the surrounding objects. We also discuss special cases which allow for the construction of fast rates in the form of linear non-asymptotic guarantees. We conclude by presenting three concrete methods from stochastic approximation where our results yield new rates of convergence, including the classical example of the stochastic proximal point method, a randomized variant of the Krasnoselskii-Mann scheme for solving stochastic fixed-point equations, and a Busemann subgradient method recently introduced by Goodwin, Lewis, L\'opez-Acedo and Nicolae [Math.\ Program., to appear], all of which make use of our metric generality by being formulated over complete geodesic metric spaces of nonpositive curvature.
\end{abstract}
\noindent
{\bf Keywords:} Regularity, rates of convergence, stochastic approximation, proof mining\\ 
{\bf MSC2020 Classification:} 62L20, 90C15, 60G42, 03F10

\section{Introduction}

\subsection{The setup}

We are concerned with the study of stochastic algorithms $(x_n)$ that approximate almost-sure solutions to the abstract problem
\[
\text{find a zero }z\in\mathrm{zer}F:=\{z\in X\mid F(z)=0\}\text{ for a function }F:X\to [0,\infty],
\]
over arbitrary separable and complete metric spaces $(X,d)$. A wide range of deterministic and stochastic problems can be naturally brought into this form, including almost-sure fixed point problems or mean minimization problems (see Section \ref{sec:examples} later on).

As with the problem formulation, we consider a broad class of algorithms: Instead of confining ourselves to a specific iteration schema, we consider arbitrary stochastic processes $(x_n)$, adapted to a filtration $(\mathsf{F}_n)$ over a probability space $(\Omega,\mathsf{F},\PP)$, that are stochastically quasi-Fej\'er monotone, i.e.
\[
\EE[d(z,x_{n+1})\mid\mathsf{F}_n]\leq (1+\zeta_n)d(z,x_n)+\xi_n\text{ a.s.}
\]
for all $n\in\mathbb{N}$ and all $z\in\mathrm{zer}F$ and where $(\zeta_n),(\xi_n)$ are summable nonnegative random variables adapted to $(\mathsf{F}_n)$. This represents a relaxed supermartingale condition in line with similar abstract approaches in the literature.

The driving question of our paper is concerned with quantitative aspects of convergence:\\

\noindent\textbf{Fundamental question:} When can effective rates towards solutions, in mean and almost surely, be explicitly described, for general classes of problems $F$ and algorithms $(x_n)$?\\

As is well-known, this problem is in particular nontrivial since \emph{effective} rates cannot always be found, a phenomenon that in optimization theory is sometimes referred to as \emph{arbitrary slow convergence}. This can be formally explained using tools from mathematical logic and computability theory, where so-called Specker sequences \cite{Specker1949} can be used to show that already in very simple deterministic cases, computable rates of convergence are generally unachievable for general quasi-Fej\'er monotone methods $(x_n)$ and problems $F$ (see in particular \cite{Neumann2015}).

\subsection{The theoretical contributions of the paper}

Inspired by the work of Kohlenbach, L\'opez-Acedo and Nicolae \cite{KohlenbachLopezAcedoNicolae2019},\footnote{The present paper is part of an ongoing eﬀort, initiated in the last two years, to bring methods from proof mining \cite{Kohlenbach2008,Kohlenbach2019} systematically to bear on probability theory and stochastic optimization. The works that this paper builds on, notably \cite{KohlenbachLopezAcedoNicolae2019} and \cite{NeriPischkePowell2025} together with \cite{Pischke2025b} (as well as the related \cite{FreundPischke2026,NeriPischkePowell2026}) are similarly part of this proof-theoretic approach to computational results in mathematics. Here, for example, this logical approach in particular influenced the way in which we formulated the various (stochastic) moduli. The rest of the paper avoids any technical reference to mathematical logic.} we consider a broad class of problems that can be characterised, in an abstract way, by considering functions $F:X\to [0,\infty]$ that satisfy a generalized regularity assumption. Indeed, we extend the generalized notion of regularity introduced in \cite{KohlenbachLopezAcedoNicolae2019} here to a stochastic context, taking the form of a modulus $\tau:(0,\infty)\to (0,\infty)$ satisfying
\[
\forall \varepsilon>0\ \forall x\in D\left(\EE[F(x)]<\tau(\varepsilon)\to \EE[\mathrm{dist}_{\mathrm{zer}F}(x)]<\varepsilon\right)
\]
for a suitable collection of $X$-valued random variables $D$. As we will show, such regularity conditions are intimately related to associated growth conditions in mean
\[
\forall x\in D\left(\EE[F(x)]\geq \tau(\EE[\mathrm{dist}_{\mathrm{zer}F}(x)])\right)
\]
for the mapping $F$.

As we will discuss in detail (see Section \ref{sec:examples}), these conditions uniformly cover various notions such as: generalized contractions and retractions; generalized weak sharp minima in the sense of \cite{KohlenbachLopezAcedoNicolae2019} (extending \cite{BurkeFerris1993}, see also \cite{BurkeDeng2002,Ferris1991,LiMordukhovichWangYao2011}) and related to that corresponding notions of error bounds (see e.g.\ \cite{BolteNguyenPeypouquetSuter2017,DrusvyatskiyLewis2018}) as well as polynomial growth conditions (see e.g.\ \cite{Shapiro1994}) and expected sharp growth \cite{AsiDuchi2019}; generalized metric subregularity (see e.g.\  \cite{HermerLukeSturm2019,LiMordukhovichZhu2026}, extending \cite{Leventhal2009,DontchevRockafellar2009}) and related Polyak-{\L}ojasiewicz, or more generally Kurdyka-{\L}ojasiewicz conditions (see e.g.\ \cite{BolteDaniilidisLeyMazet2010,BolteNguyenPeypouquetSuter2017,Zhang2020}); uniform and strong accretivity and monotonicity for operators and vector fields; uniform and strong convexity for functions. We note that many of these conditions, along with our abstract regularity notion in general, do not necessarily entail that $\mathrm{zer}F$ has a unique solution.

It is known that already for the deterministic Picard iteration, such a (deterministic) modulus of regularity can be explicitly constructed from an associated (uniform) rate of convergence for the process towards a solution (see Proposition 4.4 in \cite{KohlenbachLopezAcedoNicolae2019}), illustrating that the presence of this generalized regularity condition is fundamentally connected to the existence of explicit (and suitably uniform) rates of convergence. This in particular justifies the wide generality of the approach not only in the deterministic but also in the stochastic setting.

Our main results are general quantitative convergence theorems for stochastically quasi-Fej\'er monotone iterations relative to solution sets of problems that satisfy such a stochastic notion of regularity, requiring in addition only a mild approximation property (see Theorems \ref{thm:main} and \ref{thm:mainWeak}). These results provide explicit constructions for rates of convergence, in mean and almost surely, which are moreover highly uniform, depending only on very few data of the surrounding objects, and in particular being independent of the distribution of the space. Furthermore, we provide a similarly general result on fast (that is in this case linear) nonasymptotic guarantees in the context of linear regularity and suitable assumptions on the surrounding parameters (see Theorem \ref{thm:fast}). Finally, all of these results are formulated in the even broader context of general distance functions $\phi:X\times X\to [0,\infty)$, following previous work of the first author in a deterministic context \cite{Pischke2025b} (and of the authors and Neri \cite{NeriPischkePowell2025} in certain stochastic settings, as discussed below), which allow us to uniformly cover perturbations of the metric and distance functions beyond that, such as Bregman distances.

Some initial results on effective convergence for stochastic quasi-Fej\'er monotone sequences are contained in the authors' earlier work \cite{NeriPischkePowell2025}, confined to the more restrictive class of problems with unique solutions. The present paper extends those results substantially: Along with a much more general and extensive treatment of stochastic regularity, we are confronted with multiple demanding technical difficulties that are entirely absent in the case of unique solutions. For example, to deal with non-unique problems, our main results combine a quantitative martingale argument motivated by \cite{NeriPischkePowell2025} with a new measurable selection argument along the filtration $(\mathsf{F}_n)$ of the process $(x_n)$. By nature of the filtration, these results from measurable selection theory cannot assume the completeness of the measure space and hence require additional care beyond that of canonical results. In that context, we are forced to consider a strengthened variant of stochastic quasi-Fej\'er monotonicity, introduced abstractly for the first time here, which allows one to maintain the supermartingale condition 
\[
\EE[d(z,x_{n+1})\mid\mathsf{F}_n]\leq (1+\zeta_n)d(z,x_n)+\xi_n\text{ a.s.}
\]
even for $\mathsf{F}_n$-measurable and suitably integrable $\mathrm{zer}F$-valued random variables $z$. This notion is often satisfied outright for many methods (in particular for the applications discussed in this paper), but we additionally examine this stronger property in relation to the usual formulation, providing a general lifting result for a class of distances including metric powers (abstracting recent work of Combettes and Madariaga \cite{CombettesMadariaga2025}).

\subsection{The applied contributions of the paper}

We provide three example applications of our theoretical work, yielding new results for both classical and recently introduced methods. To illustrate the metric generality of our framework, all applications are set in the context of Hadamard spaces, that is complete geodesic metric spaces with nonpositive curvature in the sense of Alexandrov (see e.g.\ \cite{BridsonHaefliger1999}), covering in particular Hilbert spaces, $\mathbb{R}$-trees, as well as Hadamard manifolds (i.e.\ complete simply connected Riemannian manifolds of nonpositive sectional curvature). This generality is particularly beneficial since stochastic optimization over nonlinear spaces such as manifolds plays a key role in modern machine learning (see e.g.\ \cite{ZhangSra2016}). However, Hadamard spaces are not limited to such settings and beyond that also cover practically relevant spaces without immediate differentiable structure, such as the Billera-Holmes-Vogtmann tree space \cite{BilleraHolmesVogtmann2001} prominently used in phylogenetics, and its recent variation for networks \cite{MoultonSpillner2025}. In all cases, our results not only provide rates at a level of generality not considered previously, but also yield the strong convergence of these methods without local compactness assumptions, which for our abstract notion of regularity is qualitatively novel, even disregarding the quantitative aspects.
	
The first method we discuss is a stochastic variant of the proximal point method for minimizing the mean of a randomized convex function satisfying a relaxed Lipschitz condition as studied e.g.\ in \cite{Bacak2018,OhtaPalfia2015} (see also \cite{Bacak2014b}). Here we extend the strong convergence results provided in \cite{OhtaPalfia2015} to more general regularity assumptions on the function, and quantitatively outfit them with explicit rates. The second is a randomized variant of the central Krasnoselskii-Mann iteration for solving stochastic common fixed point problems, similar in nature to recent work by Combettes and Madariaga \cite{CombettesMadariaga2025} and recently studied over proper Hadamard spaces in \cite{NeriPischkePowell2026}. Finally, we consider the recently introduced projected subgradient method of Goodwin, Lewis, L\'opez-Acedo and Nicolae \cite{GoodwinLewisLopezAcedoNicolae2024} and its extension studied in \cite{Pischke2026} for stochastic minimization, which utilizes the so-called Busemann subgradients introduced in \cite{GoodwinLewisLopezAcedoNicolae2024}, a new type of subgradient that is particularly suitable for nonlinear geometric contexts.

\subsection{Future work}

We envisage this paper as the basis for future work, not only as a tool for providing effective convergence guarantees for various other methods in stochastic optimization and approximation, but also for instigating further theoretical developments. On the applied side, future work should in particular be concerned with stochastic iterations that make use of our generalized distances. For example, a unified approach to stochastic optimization via Bregman distances, extending stochastic quasi-Fej\'er monotonicity in the particular form treated in \cite{CombettesMadariaga2025}, has been recently considered in \cite{ZhangMiDuSunWangLiZhou2025}, and we anticipate that several of the examples given there can be suitably adapted to our framework. Also, the present work could provide guiding principles for developing effective convergence guarantees for stochastic algorithms with super relaxations in the style of \cite{CombettesMadariaga2025} also over metric contexts such as Hadamard spaces, making use of geodesic rays. On the theoretical as well as applied side, future work could include extensions of the present results to continuous-time processes, combining the approach of this paper (as well as \cite{NeriPischkePowell2025,NeriPischkePowell2026}) with the recent work \cite{FreundPischke2026} of the first author and Freund on similarly broad quantitative considerations of continuous-time dynamical systems in a deterministic setting. These results would presumably rely on a strategy that, in the style of the present paper, combines measurable selection theory with continuous-time martingale theory, a general quantitative approach to the latter being interesting in its own right. Potential applications of these combined results could then in particular include works such as \cite{BotSchindler2024,BotSchindler2025,LiuLongLiHuang2025,LukeSchnebelStaudiglPeypouquetQu2026,MaulenSotoFadiliAttouch2025,MaulenSotoFadiliAttouchOchs2026} on stochastic differential equations and inclusions and related dynamical systems.

\subsection*{Preliminaries and notation}

Throughout, we fix a probability space $(\Omega,\mathsf{F},\PP)$ together with a filtration $(\mathsf{F}_n)$. We denote the (conditional) expectation over that space by $\EE[\cdot]$ and we denote characteristic functions of measurable sets $A\in\mathsf{F}$ by $\mathbf{1}_A$. Further, if not stated otherwise, $(X,d)$ will denote a separable and complete metric space. If seen as a measure space, we assume that $X$ is endowed with its Borel $\sigma$-algebra $\mathsf{B}(X)$. We denote closed balls relative to the metric by 
\[
\overline{B}_r(a):=\{x\in X\mid d(x,a)\leq r\},
\]
given $r>0$ and $a\in X$. We often call a sequence of $X$-valued random variables an $X$-valued stochastic process. If a stochastic process $(x_n)$ is such that $x_n$ is $\mathsf{F}_n$-measurable for any $n\in\mathbb{N}$, we call it adapted to $(\mathsf{F}_n)$. We write $\ell_+(\mathsf{F}_n)$ for the set of all sequences  $(\xi_n)$ of nonnegative random variables adapted to $(\mathsf{F}_n)$ and $\ell^1_+(\mathsf{F}_n)$ for all such sequences that further satisfy $\sum_{n=0}^\infty \xi_n<\infty$ a.s. Above, and throughout the paper, we generally write $\infty$ for $+\infty$, unless we are explicitly differentiating $+\infty$ from $-\infty$.

\section{Carath\'eodory distance functions and stochastic approximation properties}

At our most abstract, we will be concerned with general distance functions $\phi:X\times X\to [0,\infty)$, potentially distinct from the metric. For such a distance and a non-empty set $S\subseteq X$, we write
\[
\mathrm{dist}^\phi_S(x):=\inf_{s\in S}\phi(s,x)
\]
and we omit the superscript $\phi$ only if $\phi=d$. Naturally, not all such distance functions will be permissible in stochastic contexts, and we have to place some assumptions on the measurability of $\phi$. The central such assumption will be that $\phi$ is a Carath\'eodory function:

\begin{assumption}[Carath\'eodory distance]\label{ass:main}
Assume $\phi$ is a Carath\'eodory distance, in the sense that $\phi$ is continuous in its left argument and measurable in its right argument.
\end{assumption}

Throughout, whenever we use $\phi$ to refer to a general distance function, we always implicitly assume the above even if not stated explicitly. As we ultimately care for metric convergence, we will be concerned with converting from a generic distance $\phi$ back to the underlying metric $d$. If points equal in the sense of the distance $\phi$ are also equal in the sense of the metric, we call such a distance consistent. The following uniform quantitative formulation of consistency defines our main assumption in that direction:

\begin{definition}[Uniformly consistent distance]
A distance $\phi$ is called uniformly consistent with modulus $\theta:(0,\infty)\to (0,\infty)$ if 
\[
\forall \varepsilon>0\ \forall x,y\in X\left(\phi(x,y)<\theta(\varepsilon)\rightarrow d(x,y)<\varepsilon\right).
\]
\end{definition}

Such a modulus essentially induces a growth condition on the distance $\phi$ in terms of the metric, as we highlight in the following remark:

\begin{remark}\label{rem:consistencyGrowth}
Let $\theta:[0,\infty)\to [0,\infty)$ be such that $\theta(0)=0$ and $\theta(\varepsilon)>0$ for $\varepsilon>0$. If $\theta$ is a modulus of uniform consistency for $\phi$, then $\phi(x,y)\geq \theta(d(x,y))$ for all $x,y\in X$.
\end{remark}

Some key examples of uniformly consistent distances are collected in the following example:

\begin{example}\label{ex:distances}
The following distance functions are Carath\'eodory distances:
\begin{enumerate}
\item Perturbed metric distances, that is $\phi=G\circ d$ where $d$ is the metric of the space and $G:[0,\infty)\to [0,\infty)$ is continuous. In particular, if $G$ is inverse continuous at $0$ with a modulus $g:(0,\infty)\to (0,\infty)$, that is
\[
\forall a\geq 0\ \forall \varepsilon >0 \left( G(a)<g(\varepsilon)\to a<\varepsilon\right),
\]
then $\phi$ is uniformly consistent with modulus $\theta(\varepsilon):=g(\varepsilon)$. A common example of a perturbation function is $G=(\cdot)^2$, which is immediately inverse continuous at $0$ with modulus $g(\varepsilon):=\varepsilon^2$. 
\item Perturbed Bregman distances, that is $\phi=G\circ D_f$ for $G$ continuous as above and
\[
D_f(x,y):= f(x)-f(y)-\langle x-y,\nabla f(y)\rangle
\]
over a reflexive Banach space $(X,\norm{\cdot})$, where $f:X\to\mathbb{R}$ is lsc, convex and Fr\'echet differentiable on $X$. Suppose further that $G$ is inverse continuous at $0$ with modulus $g$ as above, and that $f$ is sequentially consistent\footnote{Crucially, if $X$ contains at least two points, a function $f$ as above is sequentially consistent iff it is totally convex on bounded sets iff it is uniformly convex on bounded sets (see Theorem 2.10 in \cite{ButnariuResmerita2006} and see also \cite{ButnariuIusem2000}).} with a modulus of sequential consistency $\rho:(0,\infty)^2\to (0,\infty)$ in the sense of \cite{PischkeKohlenbach2024}, i.e.\
\[
\forall \varepsilon,b>0\ \forall x,y\in\overline{B}_b(0)\left( D_f(x,y)<\rho(\varepsilon,b)\to \norm{x-y}<\varepsilon\right).
\]
Then $\phi$ is uniformly consistent on every ball $\overline{B}_b(0)\subseteq X$ with modulus $\theta(\varepsilon):=g(\rho(\varepsilon,b))$. Indeed, if $x,y\in \overline{B}_b(0)$ are such that $G(D_f(x,y))<g(\rho(\varepsilon,b))$, then $D_f(x,y)<\rho(\varepsilon,b)$ as before and so $\norm{x-y}<\varepsilon$. We refer to \cite{PischkeKohlenbach2024} for related discussions.
\end{enumerate}
\end{example}

If a distance is consistent, then we can in particular also convert the associated set-distance functions:

\begin{lemma}\label{lem:consistentDistance}
Let $\phi$ be uniformly consistent with modulus $\theta$. Further, let $S\subseteq X$ be non-empty. Then for any $\varepsilon>0$ and any $x\in X$:
\[
\mathrm{dist}^\phi_S(x)<\theta(\varepsilon)\to \mathrm{dist}_S(x)<\varepsilon.
\]
\end{lemma}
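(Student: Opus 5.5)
The argument is a direct unfolding of the infimum, with no need for earlier results beyond the definition of uniform consistency. Fix $\varepsilon>0$ and $x\in X$, and assume $\mathrm{dist}^\phi_S(x)<\theta(\varepsilon)$. Since $\mathrm{dist}^\phi_S(x)=\inf_{s\in S}\phi(s,x)$ is an infimum over the non-empty set $S$, strict inequality below $\theta(\varepsilon)$ gives some $s\in S$ with $\phi(s,x)<\theta(\varepsilon)$. This is the only point where the strictness of the hypothesis matters: it is what guarantees an actual witness, without any attainment of the infimum.

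Next, apply the definition of uniform consistency with modulus $\theta$ to the pair $(s,x)$. It yields $d(s,x)<\varepsilon$. Then, since $s\in S$, we get $\mathrm{dist}_S(x)=\inf_{s'\in S}d(s',x)\leq d(s,x)<\varepsilon$, which is the claim.

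There is no genuine obstacle here. The only care needed concerns the order of arguments: $\mathrm{dist}^\phi_S$ is defined using $\phi(s,x)$ with the point of $S$ on the left. Uniform consistency is stated for all $x,y\in X$ and so applies to $(s,x)$ in that order, and the metric is symmetric, so the two conventions match without further comment.
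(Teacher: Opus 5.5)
Your proof is correct and is essentially the paper's argument: pick a witness $s\in S$ with $\phi(s,x)<\theta(\varepsilon)$ from the strict infimum inequality, then apply uniform consistency to get $d(s,x)<\varepsilon$ and hence $\mathrm{dist}_S(x)<\varepsilon$. Your remarks on strictness and argument order are accurate.
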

\begin{proof}
Suppose $\mathrm{dist}^\phi_S(x)<\theta(\varepsilon)$. Thus, there exists an $s\in S$ with $\phi(s,x)<\theta(\varepsilon)$. By the assumption on $\theta$, we get $d(s,x)<\varepsilon$ and so in particular $\mathrm{dist}_S(x)<\varepsilon$.
\end{proof}

We now turn to measurability properties of Carath\'eodory distances. Firstly, this assumption entails the following crucial measurability properties on $\phi$ and $\mathrm{dist}^\phi$:

\begin{lemma}\label{lem:measurable}
If $\phi$ is a Carath\'eodory distance, then
\begin{enumerate}
\item $\phi$ is measurable w.r.t.\ $\mathsf{B}(X)\otimes \mathsf{B}(X)$,
\item $\mathrm{dist}^\phi_S(x)$ is measurable for any non-empty $S\in\mathsf{B}(X)$.
\end{enumerate}
\end{lemma}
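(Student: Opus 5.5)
For part (1) we would use the classical fact that a Carath\'eodory function on a product with a separable metrizable factor is jointly measurable. The plan is to approximate $\phi$ by simple-in-the-left-argument functions. Since $X$ is separable, fix a countable dense set $\{q_k\}_{k\in\mathbb{N}}\subseteq X$. For each $m\in\mathbb{N}$ and $s\in X$, let $k_m(s)$ be the least $k$ with $d(s,q_k)<2^{-m}$. Define
\[
\phi_m(s,x):=\phi(q_{k_m(s)},x).
\]
The sets $A_{m,k}:=\{s\mid k_m(s)=k\}$ are Borel, being finite Boolean combinations of open balls. Hence $\phi_m=\sum_k \mathbf{1}_{A_{m,k}}(s)\,\phi(q_k,x)$ is a countable sum of products of a Borel function of $s$ and a Borel function of $x$. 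Therefore each $\phi_m$ is $\mathsf{B}(X)\otimes\mathsf{B}(X)$-measurable. Since $q_{k_m(s)}\to s$ and $\phi(\cdot,x)$ is continuous, we get $\phi_m\to\phi$ pointwise. As a pointwise limit of measurable functions, $\phi$ is measurable.

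For part (2), the first step is the case of a closed or open $S$, where separability reduces the infimum to a countable one. If $D\subseteq S$ is countable and dense in $S$, which exists since subspaces of separable metric spaces are separable, then continuity of $\phi$ in the left argument gives $\mathrm{dist}^\phi_S(x)=\inf_{s\in D}\phi(s,x)$. Each $\phi(s,\cdot)$ is measurable, so $\mathrm{dist}^\phi_S$ is measurable as a countable infimum. This density argument in fact works for \emph{any} non-empty $S\subseteq X$, Borel or not: every subset of a separable metric space has a countable dense subset $D$. Fix $s\in S$ and $x$, and pick $s_j\in D$ with $s_j\to s$. Then $\phi(s_j,x)\to\phi(s,x)$, so $\inf_D\phi(\cdot,x)\le\phi(s,x)$, while the reverse inequality holds trivially since $D\subseteq S$. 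So the Borel hypothesis is not even needed, and no measurable projection theorem is required.

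The only point needing care is the measurability of the sets $A_{m,k}$ in part (1) together with the countable-sum representation. Both are routine, so I expect no genuine obstacle. If one preferred, part (1) could instead be cited as the standard result that Carath\'eodory functions are jointly measurable (e.g.\ Aliprantis--Border, Lemma 4.51). The slight twist is that here continuity is in the left argument and measurability in the right, which the argument above handles directly.
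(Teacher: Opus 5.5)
Your proof is correct and is essentially the paper's argument written out: the paper simply cites Lemma~8.2.6 of Aubin--Frankowska for joint measurability of Carath\'eodory functions and the first part of their Lemma~8.2.11 for the infimum (marginal) function. The standard proofs of those results use the same two dense-set arguments you give, namely the approximation $\phi_m\to\phi$ and the reduction of $\inf_{s\in S}\phi(s,x)$ to a countable infimum over a dense subset of $S$. Your remark that (2) holds for arbitrary non-empty $S$ is also right, since the Borel hypothesis is only needed later in Lemma~\ref{lem:approximations}, where $\Omega\times S$ must be measurable.
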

\begin{proof}
Joint measurability of $\phi$, that is item (1), follows from e.g.\ Lemma 8.2.6 in \cite{AubinFrankowska2009}. The measurability of $\mathrm{dist}^\phi_S(x)$ follows from e.g.\ the first part of Lemma 8.2.11 in \cite{AubinFrankowska2009}, noting that for that first part it suffices to just work over a measurable space.
\end{proof}

As a key step in our convergence proofs and construction of associated rates, we will later crucially rely on the property that given a set $S\subseteq X$, we can measurably select points $s\in S$ so that $\phi(s,x)$ approximates $\mathrm{dist}^\phi_S(x)$, and that with arbitrary degree of precision. A result in that vein appears, for $\phi=d$, in the work of R\"omisch \cite{Roemisch1986} and our results effectively extend his. However, \cite{Roemisch1986} operates under the assumption that the underlying $\sigma$-algebra is complete. Indeed, it is exactly this completeness of the underlying probability space, which occurs as a common assumption whenever results from measurable selection theory are used, that is problematic in our context as we will later require this property for all elements $\mathsf{F}_n$ of an associated filtration, which will generally not be complete. Nevertheless, the above property can be guaranteed without any completeness assumptions of the underlying probability space for the present Carath\'eodory distances $\phi$, as we will now show. 

For this, we rely on a few results from measurable selection theory which we now collect. Given a measurable space $(T,\mathsf{T})$ and a complete separable metric space $X$, a set-valued map $\varphi:T\to 2^X$ is called graph measurable if
\[
\mathrm{gra}(\varphi):=\{(t,x)\in T\times X\mid x\in\varphi(t)\}\in\mathsf{T}\otimes\mathsf{B}(X).
\]
Over complete $\sigma$-finite measure spaces, and if $\varphi$ has non-empty closed images, this is equivalent to the (weak) measurability of $\varphi$ as commonly considered in measurable selection theory but generally, the above presents a weaker notion. We refer to \cite{AliprantisBorder2006,AubinFrankowska2009,CastaingValadier1977} for further background on that area.

We begin with the well-known measurable selection theorem of Aumann \cite{Aumann1969}, which forms a crucial ingredient for dispensing of completeness.

\begin{lemma}[Aumann \cite{Aumann1969}, see also Corollary 18.27 in \cite{AliprantisBorder2006}]\label{lem:Aumann}
Let $(T,\mathsf{T},\mu)$ be a finite measure space and let $X$ be a complete separable metric space. Let $\varphi:T\to 2^X$ be a graph measurable with non-empty values. Then there is a measurable function $x:T\to X$ such that $x(t)\in \varphi(t)$ almost everywhere.
\end{lemma}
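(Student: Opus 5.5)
The strategy is to reduce to the classical von Neumann--Aumann selection theorem for analytic sets, using the completion of $\mu$ only as an intermediate device, and then to recover a $\mathsf{T}$-measurable selector by modifying the selection on a null set.

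First, I complete the measure space: let $\overline{\mathsf{T}}$ be the $\mu$-completion of $\mathsf{T}$ and $\overline{\mu}$ the extension of $\mu$. Since $\mathrm{gra}(\varphi)\in\mathsf{T}\otimes\mathsf{B}(X)$ and $X$ is Polish, the projection theorem gives the following. For any set $G\in\mathsf{T}\otimes\mathsf{B}(X)$, its projection onto $T$ is $\mu$-measurable, i.e.\ lies in $\overline{\mathsf{T}}$. This holds because such projections are $\mathsf{T}$-analytic (Suslin) sets, and these are universally measurable.

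Second, I apply the von Neumann--Aumann selection theorem in its standard form. It provides a map $\bar x:T\to X$ that is measurable with respect to $\sigma(\mathcal{S}(\mathsf{T}))$, the $\sigma$-algebra generated by Suslin sets over $\mathsf{T}$, with $\bar x(t)\in\varphi(t)$ for every $t$ in the projection of the graph. Non-emptiness of the values means this projection is all of $T$. Since $\sigma(\mathcal{S}(\mathsf{T}))\subseteq\overline{\mathsf{T}}$, the map $\bar x$ is $\overline{\mathsf{T}}$-measurable.

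Third, I modify $\bar x$ on a null set to make it $\mathsf{T}$-measurable. Here I use separability of $X$. Fix a countable base $(U_k)$ of $X$. Each preimage $\bar x^{-1}(U_k)$ lies in $\overline{\mathsf{T}}$, so there is $A_k\in\mathsf{T}$ whose symmetric difference with $\bar x^{-1}(U_k)$ is contained in a $\mu$-null set $N_k\in\mathsf{T}$. Set $N:=\bigcup_k N_k\in\mathsf{T}$, which is null, fix any $x_0\in X$, and define
\[
x(t):=\bar x(t)\text{ for }t\notin N,\qquad x(t):=x_0\text{ for }t\in N.
\]
Then
\[
x^{-1}(U_k)=\bigl(A_k\setminus N\bigr)\cup\bigl(N\text{ if }x_0\in U_k\bigr),
\]
which lies in $\mathsf{T}$. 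Since every open set is a countable union of the $U_k$, the map $x$ is $\mathsf{T}$-measurable. Moreover, $x(t)=\bar x(t)\in\varphi(t)$ for all $t\notin N$, i.e.\ almost everywhere, which is exactly the claim. Finiteness of $\mu$ enters only to make the completion and the universal-measurability argument straightforward; $\sigma$-finiteness would also suffice.

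The main obstacle is the second step, namely justifying that the Suslin-measurable selector is $\overline{\mathsf{T}}$-measurable. This requires the measurability of Suslin operations over an arbitrary measurable space, that is, the fact that $\mu$-completions are closed under the Suslin operation. I would cite this from \cite{AliprantisBorder2006} or \cite{CastaingValadier1977} rather than reprove it. Everything else is routine.
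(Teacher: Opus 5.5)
Your argument is correct and is essentially the standard proof behind the reference the paper relies on. The paper gives no proof of its own and only cites Aumann and Corollary 18.27 in Aliprantis--Border, and that corollary is obtained exactly as you do: first produce a selector measurable for the $\mu$-completion $\overline{\mathsf{T}}$ (via the von Neumann--Aumann machinery together with closure of $\overline{\mathsf{T}}$ under the Suslin operation), then redefine it on a $\mathsf{T}$-measurable null set using a countable base of $X$; your first step is not actually needed, since non-emptiness of the values already makes the projection of the graph all of $T$.
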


Beyond this selection theorem, the other main result we need is a variant of the inverse image theorem in measurable selection theory (see e.g.\ Theorem 8.2.9 in \cite{AubinFrankowska2009}). This result is commonly stated under the assumption that the measure space is complete, an assumption which we, as discussed before, crucially want to avoid. We in the following give a variant which dispenses of that assumption while weakening the conclusion to the graph measurability of the respective function, which will however suffice in the context of Aumann's selection theorem. That argument essentially follows the usual proof of the inverse image theorem, and so harbors no surprises. Nevertheless, we rederive it here for the convenience of the reader.

\begin{lemma}\label{lem:inverseTheoremMeas}
Let $(T,\mathsf{T})$ be a measurable space and let $X,Y$ be complete separable metric spaces. Let $\varphi:T\to 2^X$ and $\psi:T\to 2^Y$ be graph measurable and let $c:T\times X\to Y$ be a Carath\'eodory function. Then $\chi$ defined by
\[
\chi(t):=\{x\in \varphi(t)\mid c(t,x)\in \psi(t)\}
\]
is graph measurable.
\end{lemma}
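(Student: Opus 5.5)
The plan is to write the graph of $\chi$ as an intersection of two sets and check that each lies in $\mathsf{T}\otimes\mathsf{B}(X)$. By definition,
\[
\mathrm{gra}(\chi)=\mathrm{gra}(\varphi)\cap\{(t,x)\in T\times X\mid (t,c(t,x))\in\mathrm{gra}(\psi)\}.
\]
The first set is in $\mathsf{T}\otimes\mathsf{B}(X)$ by assumption. For the second, consider the map $h:T\times X\to T\times Y$ given by $h(t,x):=(t,c(t,x))$. The second set is exactly $h^{-1}(\mathrm{gra}(\psi))$. Since $\mathrm{gra}(\psi)\in\mathsf{T}\otimes\mathsf{B}(Y)$, it suffices to show that $h$ is measurable from $\mathsf{T}\otimes\mathsf{B}(X)$ to $\mathsf{T}\otimes\mathsf{B}(Y)$.

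Measurability of $h$ into a product $\sigma$-algebra reduces to measurability of its two components, because the product $\sigma$-algebra is generated by the coordinate projections. The first component $(t,x)\mapsto t$ is trivially measurable. The second component is $c$ itself, viewed as a function on $T\times X$. We therefore need that a Carath\'eodory function is jointly measurable w.r.t.\ $\mathsf{T}\otimes\mathsf{B}(X)$.

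This is the standard fact already used in Lemma \ref{lem:measurable}(1), e.g.\ Lemma 8.2.6 in \cite{AubinFrankowska2009}. It holds for arbitrary measurable spaces $(T,\mathsf{T})$ and separable metric $X$, with no completeness needed. For completeness I would sketch the argument, which is the main (though routine) technical step:
\begin{enumerate}
\item Fix a countable dense set $\{x_k\}\subseteq X$.
\item For each $m$, define $c_m(t,x):=c(t,x_k)$, where $k$ is the least index with $d(x,x_k)<1/m$.
\item Each $c_m$ is $\mathsf{T}\otimes\mathsf{B}(X)$-measurable, since it is a countable patching of the measurable maps $t\mapsto c(t,x_k)$ over the Borel pieces $\{x\mid k \text{ least with } d(x,x_k)<1/m\}$.
\item By continuity of $c(t,\cdot)$, we have $c_m\to c$ pointwise.
\item A pointwise limit of measurable maps into the metric space $Y$ is measurable. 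To see this, for open $U\subseteq Y$ write $U=\bigcup_j F_j$ with $F_j:=\{y\mid \mathrm{dist}(y,Y\setminus U)\ge 1/j\}$, and then
\[
c^{-1}(U)=\bigcup_j\bigcup_N\bigcap_{m\ge N}c_m^{-1}\bigl(\{y\mid\mathrm{dist}(y,Y\setminus U)>1/j\}\bigr).
\]
\end{enumerate}

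Combining the pieces, $h^{-1}(\mathrm{gra}(\psi))\in\mathsf{T}\otimes\mathsf{B}(X)$. Intersecting with $\mathrm{gra}(\varphi)$ then gives graph measurability of $\chi$. The only step with any obstacle is the joint measurability of $c$, and it is settled by the citation above. The conclusion is only graph measurability, rather than measurability in the usual sense of measurable selection theory, because no completeness is used. This avoids the projection theorem, which is exactly where completeness enters the classical inverse image theorem.
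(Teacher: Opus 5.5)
Your proof is correct and follows the paper's argument. You use the same decomposition $\mathrm{gra}(\chi)=\mathrm{gra}(\varphi)\cap h^{-1}(\mathrm{gra}(\psi))$ with $h(t,x)=(t,c(t,x))$, and you get measurability of $h$ from joint measurability of the Carath\'eodory function $c$ (the paper checks preimages of rectangles $A\times B$, which amounts to your componentwise argument). Your sketch of joint measurability via a countable dense set is a correct self-contained substitute for the fact the paper simply asserts.
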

\begin{proof}
Define $d(t,x):=(t,c(t,x))$ and note that
\[
\mathrm{gra}(\chi)=\mathrm{gra}(\varphi)\cap d^{-1}(\mathrm{gra}(\psi)).
\] 
As $c$ is a Carath\'eodory function, it is $\mathsf{T}\otimes \mathsf{B}(X)$/$\mathsf{B}(Y)$-measurable. In particular, we thus have
\[
d^{-1}(A\times B)=c^{-1}(B)\cap (A\times X)\in\mathsf{T}\otimes\mathsf{B}(X)
\]
for any $A\in\mathsf{T}$ and $B\in\mathsf{B}(Y)$. In particular, we thus have $d^{-1}(C)\in \mathsf{T}\otimes\mathsf{B}(X)$ for any $C\in\mathsf{T}\otimes\mathsf{B}(Y)$ and so $d$ is measurable. In particular, as $\mathrm{gra}(\varphi)$ and $\mathrm{gra}(\psi)$ are measurable, we thus have that $\mathrm{gra}(\chi)$ is measurable as well.
\end{proof}

The following result now is an extension of the canonical result that closed balls in complete separable metric spaces are measurable if their origins and radii are (see e.g.\ Corollary 8.2.13 in \cite{AubinFrankowska2009}). For that, given $r>0$ and $x\in X$, we define
\[
\overline{B}^\phi_r(x):=\{y\in X\mid \phi(y,x)\leq r\}
\]
as the closed ball around $x$ with radius $r$, relative to $\phi$.

\begin{lemma}\label{lem:ballsMeas}
Let $(T,\mathsf{T})$ be a measurable space and let $f:T\to X$, $\rho:T\to [0,\infty)$ be measurable and let $\phi$ be a Carath\'eodory distance. Then $t\mapsto \overline{B}^\phi_{\rho(t)}(f(t))$ is graph measurable.
\end{lemma}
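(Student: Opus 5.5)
The plan is to write the graph of $t\mapsto \overline{B}^\phi_{\rho(t)}(f(t))$ explicitly as
\[
\mathrm{gra}=\{(t,y)\in T\times X\mid \phi(y,f(t))\leq \rho(t)\}
\]
and show this set lies in $\mathsf{T}\otimes\mathsf{B}(X)$. We do this by exhibiting $(t,y)\mapsto \phi(y,f(t))-\rho(t)$ as a $\mathsf{T}\otimes\mathsf{B}(X)$-measurable real function and taking the preimage of $(-\infty,0]$.

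For the measurability of this function, we first note that the map $(t,y)\mapsto (y,f(t))$ from $(T\times X,\mathsf{T}\otimes\mathsf{B}(X))$ into $(X\times X,\mathsf{B}(X)\otimes\mathsf{B}(X))$ is measurable. Its components are the projection $(t,y)\mapsto y$, which is measurable, and $(t,y)\mapsto f(t)$, which is measurable because $f$ is. Measurability into a product $\sigma$-algebra is checked componentwise, using measurable rectangles as in the proof of Lemma \ref{lem:inverseTheoremMeas}. By Lemma \ref{lem:measurable}(1), $\phi$ is $\mathsf{B}(X)\otimes\mathsf{B}(X)$-measurable, so the composition $(t,y)\mapsto\phi(y,f(t))$ is $\mathsf{T}\otimes\mathsf{B}(X)$-measurable. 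Likewise $(t,y)\mapsto\rho(t)$ is measurable, being $\rho$ composed with a projection, and the difference of two real-valued measurable functions is measurable.

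An alternative route phrases the argument through Lemma \ref{lem:inverseTheoremMeas}. Take $\varphi(t):=X$, $\psi(t):=[0,\rho(t)]\subseteq Y:=[0,\infty)$ (graph measurable, being the set where $0\le s\le\rho(t)$), and $c(t,y):=\phi(y,f(t))$. One then checks that $c$ is Carath\'eodory: it is continuous in $y$ by Assumption \ref{ass:main}, and measurable in $t$ for fixed $y$ as the composition of $f$ with the measurable map $\phi(y,\cdot)$. The direct argument above seems cleaner, so I would present that one and mention this as a remark.

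The only point needing care, which I see as the main obstacle, is that Lemma \ref{lem:measurable}(1) gives measurability with respect to the product of Borel $\sigma$-algebras rather than the Borel $\sigma$-algebra of $X\times X$. This is exactly what the composition needs, and since $X$ is separable the two coincide anyway. No completeness of $\mathsf{T}$ is used at any point, which is the reason the conclusion is graph measurability rather than measurability in the usual sense.
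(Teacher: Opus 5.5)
Your proof is correct, but your main argument takes a slightly different route from the paper's. The paper's proof is exactly the ``alternative route'' you sketch: it observes that $c(t,x):=\phi(x,f(t))$ is a Carath\'eodory function and applies Lemma \ref{lem:inverseTheoremMeas} with $\varphi\equiv X$ and $\psi(t):=[0,\rho(t)]$.

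Your direct argument instead uses joint measurability of $\phi$ itself (Lemma \ref{lem:measurable}(1)). You compose it with the measurable map $(t,y)\mapsto(y,f(t))$ and exhibit the graph as the sublevel set $\{(t,y)\mid \phi(y,f(t))-\rho(t)\le 0\}$. This avoids checking that $c$ is Carath\'eodory on $T\times X$ and does not need the intermediate lemma.

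Both proofs rest on the same fact, that Carath\'eodory functions are jointly measurable. You apply it to $\phi$ on $X\times X$; the paper applies it to $c$ on $T\times X$, inside the proof of Lemma \ref{lem:inverseTheoremMeas}. Your version is more self-contained. The paper's version presents the statement as an instance of its completeness-free inverse image lemma, which keeps it within the measurable-selection framework and carries over directly to other choices of $\varphi$ and $\psi$.

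The point you flag as the main obstacle is not really one. Lemma \ref{lem:measurable}(1) gives exactly $\mathsf{B}(X)\otimes\mathsf{B}(X)$-measurability, which is what your composition needs.
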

\begin{proof}
Define $c(t,x):=\phi(x,f(t))$. As $\phi$ is a Carath\'eodory distance as described above, we get that also $c$ is a Carath\'eodory function. The result now follows from the previous Lemma \ref{lem:inverseTheoremMeas} by setting $\varphi\equiv X$ and $\psi(t):= [0,\rho(t)]$.
\end{proof}

\begin{lemma}\label{lem:approximations}
Let $\mathcal{F}$ be a sub-$\sigma$-algebra of $\mathsf{F}$. Further, let $\phi$ be a Carath\'eodory distance and let $S\in\mathsf{B}(X)$ be non-empty. Then $\mathrm{dist}^\phi_S$ has $\mathcal{F}$-measurable approximations in the sense that for all $\mathcal{F}$-measurable $X$-valued random variables $x$ and any $\varepsilon>0$, there exists some $X$-valued $\mathcal{F}$-measurable random variable $s$ such that
\[
s\in S\text{ and }\phi(s,x)\leq \mathrm{dist}^\phi_S(x)+\varepsilon\text{ a.s.}
\]
\end{lemma}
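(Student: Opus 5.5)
The plan is to realize the desired $s$ as a measurable selection, via Aumann's theorem (Lemma \ref{lem:Aumann}), of the set-valued map
\[
\chi(\omega):=\{y\in S\mid \phi(y,x(\omega))\leq \mathrm{dist}^\phi_S(x(\omega))+\varepsilon\}.
\]
We work on the finite measure space $(\Omega,\mathcal{F},\PP|_{\mathcal{F}})$, which need not be complete. First, $\chi(\omega)$ is non-empty for every $\omega$: the quantity $\mathrm{dist}^\phi_S(x(\omega))$ is a finite infimum, since $\phi$ takes values in $[0,\infty)$ and $S\neq\emptyset$. Because $\varepsilon>0$, there is some $y\in S$ with $\phi(y,x(\omega))<\mathrm{dist}^\phi_S(x(\omega))+\varepsilon$.

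The main step is to show that $\chi$ is graph measurable with respect to $\mathcal{F}\otimes\mathsf{B}(X)$. I would apply Lemma \ref{lem:inverseTheoremMeas} with the following choices:
\begin{enumerate}
\item $T=\Omega$ and $\mathsf{T}=\mathcal{F}$;
\item $\varphi(\omega):=S$, which is graph measurable since $\mathrm{gra}(\varphi)=\Omega\times S$ and $S\in\mathsf{B}(X)$;
\item $Y=\mathbb{R}$ and $c(\omega,y):=\phi(y,x(\omega))$;
\item $\psi(\omega):=[0,\rho(\omega)]$ with $\rho(\omega):=\mathrm{dist}^\phi_S(x(\omega))+\varepsilon$.
\end{enumerate}

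Each ingredient then needs to be checked. The function $c$ is Carath\'eodory: it is continuous in $y$ because $\phi$ is continuous in its left argument. For fixed $y$, the map $\omega\mapsto\phi(y,x(\omega))$ is $\mathcal{F}$-measurable, being the composition of the measurable map $\phi(y,\cdot)$ with the $\mathcal{F}$-measurable $x$. The function $\rho$ is $\mathcal{F}$-measurable, since $\mathrm{dist}^\phi_S$ is Borel by Lemma \ref{lem:measurable}(2) and is composed with $x$. Hence $\psi$ is graph measurable, because
\[
\mathrm{gra}(\psi)=\{(\omega,r)\mid 0\leq r\leq\rho(\omega)\}=\{(\omega,r)\mid r-\rho(\omega)\leq 0\}\cap(\Omega\times[0,\infty)),
\]
and $(\omega,r)\mapsto r-\rho(\omega)$ is $\mathcal{F}\otimes\mathsf{B}(\mathbb{R})$-measurable. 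This is essentially the argument of Lemma \ref{lem:ballsMeas}, except that $\varphi$ is now $S$ rather than $X$. Lemma \ref{lem:inverseTheoremMeas} then gives exactly the graph measurability of $\chi$.

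Aumann's theorem now yields an $\mathcal{F}$-measurable $s:\Omega\to X$ with $s(\omega)\in\chi(\omega)$ for $\PP$-almost every $\omega$. Unpacking the definition of $\chi$, this says $s\in S$ and $\phi(s,x)\leq\mathrm{dist}^\phi_S(x)+\varepsilon$ almost surely, which is the claim. The main obstacle is to avoid the completeness of $\mathcal{F}$ throughout. Graph measurability is the only form of measurability we establish, so we cannot invoke the usual Kuratowski–Ryll-Nardzewski theorem. Aumann's lemma is what makes this work, at the price of obtaining the conclusion only almost surely, and that weaker conclusion is precisely what the statement asks for.
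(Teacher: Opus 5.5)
Your proposal is correct and follows essentially the same route as the paper: establish graph measurability of the set of $\varepsilon$-approximate minimizers with respect to the possibly incomplete $\mathcal{F}$, then apply Aumann's theorem (Lemma~\ref{lem:Aumann}). The only differences are cosmetic. You apply Lemma~\ref{lem:inverseTheoremMeas} directly with $\varphi\equiv S$, whereas the paper takes the $\phi$-ball from Lemma~\ref{lem:ballsMeas} and intersects its graph with $\Omega\times S$. You also make explicit two points the paper leaves implicit: that the values are non-empty, and that $\psi$ is graph measurable.
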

\begin{proof}
Let $x$ be an $X$-valued $\mathcal{F}$-measurable random variable and let $\varepsilon>0$ be given. Define
\[
A(\omega):=\{s\in S\mid \phi(s,x(\omega))\leq \mathrm{dist}^\phi_S(x(\omega))+\varepsilon\}=S\cap \overline{B}^\phi_{r(\omega)}(x(\omega))
\]
where $r(\omega):=\mathrm{dist}^\phi_S(x(\omega))+\varepsilon$. As $\mathrm{dist}_\phi$ is measurable, $r(\omega)$ is an $\mathcal{F}$-measurable random variable. By Lemma \ref{lem:ballsMeas}, we get that $\overline{B}^\phi_{r(\omega)}(x(\omega))$ is graph measurable w.r.t.\ $\mathcal{F}$. As we have
\[
\mathrm{gra}A=\mathrm{gra}\left(\overline{B}^\phi_{r}(x)\right)\cap (\Omega\times S)
\]
and since $S$ is Borel, we also get that $A$ is graph measurable w.r.t.\ $\mathcal{F}$. Aumann's measurable selection theorem, that is Lemma \ref{lem:Aumann}, now yields the existence of an $X$-valued $\mathcal{F}$-measurable random variable $s$ such that $s\in S$ and $\phi(s,x)\leq \mathrm{dist}^\phi_S(x)+\varepsilon$ a.s.
\end{proof}

We will actually only rely on the above property being true in mean:

\begin{corollary}\label{cor:approximationsMean}
Let $\mathcal{F}$ be a sub-$\sigma$-algebra of $\mathsf{F}$. Further, let $\phi$ be a Carath\'eodory distance and let $S\in\mathsf{B}(X)$ be non-empty. Then $\mathrm{dist}^\phi_S$ has $\mathcal{F}$-measurable approximations in mean, in the sense that for all $\mathcal{F}$-measurable $X$-valued random variables $x$ and any $\varepsilon>0$, there exists some $X$-valued $\mathcal{F}$-measurable random variable $s$ such that 
\[
s\in S\text{ a.s.\ and }\EE[\phi(s,x)]\leq \EE[\mathrm{dist}^\phi_S(x)]+\varepsilon.
\]
\end{corollary}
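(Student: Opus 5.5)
The plan is to deduce the statement directly from Lemma \ref{lem:approximations} by integrating the almost sure inequality. Let $x$ be an $\mathcal{F}$-measurable $X$-valued random variable and fix $\varepsilon>0$. Lemma \ref{lem:approximations}, applied with the same $\varepsilon$, gives an $\mathcal{F}$-measurable $X$-valued random variable $s$ such that
\[
s\in S\text{ and }\phi(s,x)\leq \mathrm{dist}^\phi_S(x)+\varepsilon\text{ a.s.}
\]
The first part of the conclusion is then immediate.

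For the second part, I will check that both sides of the pointwise inequality are nonnegative random variables, so that their expectations are defined in $[0,\infty]$.
\begin{enumerate}
\item By Lemma \ref{lem:measurable}(1), $\phi$ is $\mathsf{B}(X)\otimes\mathsf{B}(X)$-measurable. The map $\omega\mapsto(s(\omega),x(\omega))$ is $\mathcal{F}/\mathsf{B}(X)\otimes\mathsf{B}(X)$-measurable, since $X$ is separable and so $\mathsf{B}(X\times X)=\mathsf{B}(X)\otimes\mathsf{B}(X)$. Hence $\phi(s,x)$ is measurable.
\item By Lemma \ref{lem:measurable}(2), $\mathrm{dist}^\phi_S$ is Borel measurable, so $\mathrm{dist}^\phi_S(x)$ is a nonnegative random variable.
\end{enumerate}

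Monotonicity of the expectation for nonnegative random variables then gives
\[
\EE[\phi(s,x)]\leq \EE[\mathrm{dist}^\phi_S(x)+\varepsilon]=\EE[\mathrm{dist}^\phi_S(x)]+\varepsilon,
\]
using linearity and $\PP(\Omega)=1$. If $\EE[\mathrm{dist}^\phi_S(x)]=\infty$ the inequality holds trivially, so no integrability assumption is needed.

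There is no real obstacle here. The only point requiring care is the joint measurability of $\phi(s,x)$ in item (1), and that is already supplied by Lemma \ref{lem:measurable}.
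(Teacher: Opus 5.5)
Your proof is correct and follows exactly the route the paper intends: the paper states the corollary without proof as an immediate consequence of Lemma \ref{lem:approximations}, obtained by taking expectations in the almost sure inequality, with measurability supplied by Lemma \ref{lem:measurable}. One small remark is that the appeal to separability in your item (1) is not needed: $\omega\mapsto(s(\omega),x(\omega))$ is always $\mathcal{F}/\mathsf{B}(X)\otimes\mathsf{B}(X)$-measurable, since the preimage of a rectangle $A\times B$ is $s^{-1}(A)\cap x^{-1}(B)$, and Lemma \ref{lem:measurable}(1) already gives product measurability of $\phi$.
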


\section{Stochastic regularity for abstract problems}\label{sec:regularity}

\subsection{Regularity in mean}

As motivated in the introduction already, we consider an arbitrary function $F:X\to [0,\infty]$ and the associated problem of finding an element of 
\[
\mathrm{zer}F:=\{z\in X\mid F(z)=0\}
\]
for a general and abstract problem formulation. Naturally, also not all such problem formulations will be permissible in stochastic contexts. We make the following assumption:

\begin{assumption}[Stochastic problem]\label{ass:mainProblem}
Assume $F$ is measurable, and that $\mathrm{zer}F$ is a closed non-empty set.
\end{assumption}

As we will see, a range of stochastic problems can be formulated in this simple manner, satisfying the above assumption (see in particular Section \ref{sec:examples} later). In particular, while the function $F$ and its associated zero problem are themselves deterministic, they also naturally cover stochastic zero problems as illustrated abstractly in the following example:

\begin{example}\label{ex:problemEx}
Let $(T,\mathsf{T},\mu)$ be a $\sigma$-finite measure space and $h:T\times X\to [0,\infty]$ be a Carath\'eodory function. The associated stochastic problem
\[
\mathrm{zer}h:=\{z\in X\mid h(t,z)=0\text{ almost everywhere}\}
\]
can be recognized as an instance of a problem $\mathrm{zer}F$ as above by setting $F(z):=\int h(t,z)\,d\mu(t)$. In particular, $F$ is measurable by the Fubini-Tonelli theorem as $h$ is $\mathsf{T}\otimes \mathsf{B}(X)$-measurable. Further, note that $\mathrm{zer}F$ is closed: If $(z_n)\subseteq \mathrm{zer}F$ with $\lim_{n\to\infty}z_n=z$, then $h(t,z_n)=0$ almost everywhere for all $n\in\mathbb{N}$, say on $T^c_n$ with $T_n$ of measure $0$. Define $T':=\bigcup_{n\in\mathbb{N}}T_n$. Then ${T'}$ still has measure $0$ and for $t\in ({T'})^c$, we have $h(t,z_n)=0$ for all $n\in\mathbb{N}$. As $h$ is continuous in its right argument, we have $h(t,z)=0$. Therefore $h(t,z)=0$ almost everywhere, so that $z\in\mathrm{zer}F$.
\end{example}

We are primarily interested in quantitative stochastic regularity conditions on such problems which allow for the construction of explicit rates of convergence of stochastic processes that satisfy a standard almost-supermartingale condition. An important ``regularity'' assumption that is often imposed in this regard, even though often left implicit, is that the respective problem has a unique solution $\mathrm{zer}F=\{z\}$ quantified by an explicit \emph{modulus of uniqueness} in the following stochastic sense:

\begin{definition}[Stochastic uniqueness in mean]\label{def:stochasticUniqueness}
Let $\phi$ be a Carath\'eodory distance and let $D$ be a collection of $X$-valued random variables. Assume $\mathrm{zer}F=\{z\}$. A modulus of $\phi$-uniqueness for $F$ in mean w.r.t.\ $D$ is a function $\tau:(0,\infty)\to (0,\infty)$ with
\[
\forall \varepsilon>0\ \forall x\in D\left(\EE[F(x)]<\tau(\varepsilon)\to \EE[\phi(z,x)]<\varepsilon\right).
\]
\end{definition}

The more general regularity notion we introduce below arises as a natural extension of this quantitative notion of uniqueness in mean to \emph{non-unique} problems, obtained by replacing the distance $\phi(z,x)$ to the (unique) solution $z$ with the distance $\mathrm{dist}^\phi_{\mathrm{zer}F}(x)$ to the solution set $\mathrm{zer}F$. 

\begin{definition}[Stochastic regularity in mean]
Let $\phi$ be a Carath\'eodory distance and let $D$ be a collection of $X$-valued random variables. A modulus of $\phi$-regularity for $F$ in mean w.r.t.\ $D$ is a function $\tau:(0,\infty)\to (0,\infty)$ with
\[
\forall \varepsilon>0\ \forall x\in D\left(\EE[F(x)]<\tau(\varepsilon)\to \EE[\mathrm{dist}^\phi_{\mathrm{zer}F}(x)]<\varepsilon\right).
\]
\end{definition}
If $D$ is the class of all $X$-valued random variables, we simply call such a function $\tau$ a modulus of $\phi$-regularity for $F$ in mean. We note that this notion coincides with the previous modulus of uniqueness if $\mathrm{zer}F=\{z\}$. Crucially, such a function induces a growth condition on $F$ in mean, as we collect in the following remark.

\begin{remark}\label{rem:meanGrowth}
Let $\tau:[0,\infty)\to[0,\infty)$ be such that $\tau(0)=0$ and $\tau(\varepsilon)>0$ for $\varepsilon>0$. If $\tau$ is a modulus of $\phi$-regularity for $F$ in mean w.r.t.\ $D$, then
\[
\EE[F(x)]\geq \tau(\EE[\mathrm{dist}^\phi_{\mathrm{zer}F}(x)])
\]
for all $x\in D$. Further, if $\tau$ is additionally nondecreasing, these two properties are equivalent. To see this equivalence, simply note that for any $\varepsilon>0$, if $\EE[F(x)]<\tau(\varepsilon)$, then it holds that $\tau(\EE[\mathrm{dist}^\phi_{\mathrm{zer}F}(x)])<\tau(\varepsilon)$ which yields that $\EE[\mathrm{dist}^\phi_{\mathrm{zer}F}(x)]<\varepsilon$, as $\tau$ is nondecreasing.
\end{remark}

Instantiations of such general stochastic moduli of regularity appear in various situations already throughout the literature, as we will survey later in this section. It is the goal of this paper to develop a uniform quantitative theory of these regularity assumptions and in particular to illustrate how they can be used to systematically derive rates of convergence for stochastic algorithms. As discussed in the introduction, related (but slightly different) results for the notion of stochastic uniqueness in mean were obtained by the present authors and Neri in \cite{NeriPischkePowell2025} (with moduli of uniqueness in mean called \emph{moduli of strong uniqueness in expectation} therein), derived from more abstract quantitative results for certain stochastic processes, and these also motivate part of our approach here. However, as also highlighted before, the uniqueness assumption heavily simplifies the problem, so that the present paper in particular relies on substantiative additional theory that complements the work \cite{NeriPischkePowell2025}.

\subsection{Variants of stochastic regularity}

We now discuss abstract ways in which regularity in mean can arise, and how corresponding moduli can be derived and defined in these cases. These abstract results are then used later on to capture well-known regularity notions from the literature as instances of our abstract notion. Our first result in this vein shows how stochastic regularity in mean can be obtained from a pointwise property, for suitable $\tau$.

\begin{definition}[Pointwise regularity]
Let $\phi$ be a Carath\'eodory distance. A modulus of $\phi$-regularity for $F$ is a function $\tau:(0,\infty)\to (0,\infty)$ with
\[
\forall \varepsilon>0\ \forall x\in X\left(F(x)<\tau(\varepsilon)\to \mathrm{dist}^\phi_{\mathrm{zer}F}(x)<\varepsilon\right).
\]
\end{definition}

For $\phi=d$, this property coincides with (a special case\footnote{In \cite{KohlenbachLopezAcedoNicolae2019}, the authors further consider a ``local'' variant of this notion, with the property required only on a ball around a fixed solution $z\in\mathrm{zer}F$ (see Definition 3.1 therein). In this paper, we omit this additional locality condition, as it has implications on the boundedness of random variables that seem to limit the stochastic theory. In any way, most regularity notions commonly encountered in the literature are even of this ``global'' form, as we will later survey in Section \ref{sec:examples}, so that this creates no severe limitations.} of) the deterministic notion of a modulus of regularity as defined in \cite{KohlenbachLopezAcedoNicolae2019}. The case of more general distance functions first appeared in \cite{Pischke2025b}. Similarly to before, we can recognize such a regularity property as a different formulation of a growth condition for $F$. This was already highlighted in \cite{KohlenbachLopezAcedoNicolae2019} (see Remark 3.2 therein), after which the following remark (and in fact already Remark \ref{rem:meanGrowth}) are modelled.

\begin{remark}\label{rem:growth}
Let $\tau:[0,\infty)\to[0,\infty)$ be such that $\tau(0)=0$ and $\tau(\varepsilon)>0$ for $\varepsilon>0$. If $\tau$ is a modulus of $\phi$-regularity for $F$, then
\[
F(x)\geq \tau(\mathrm{dist}^\phi_{\mathrm{zer}F}(x))
\]
for all $x\in X$. Again, if $\tau$ is additionally nondecreasing, these two properties are equivalent, which can be shown as in Remark \ref{rem:meanGrowth}. 
\end{remark}

While in \cite{KohlenbachLopezAcedoNicolae2019} (and \cite{Pischke2025b}) this modulus is studied only in the context of nonstochastic problems, we can now show that in the case that $\tau$ is nondecreasing and convex and $\mathrm{dist}^\phi_{\mathrm{zer}F}(x)$ integrable for all $x\in D$, any such deterministic modulus is also a modulus of $\phi$-regularity for $F$ in mean w.r.t.\ $D$. In particular, it thus follows that essentially all deterministic regularity notions as studied in \cite{KohlenbachLopezAcedoNicolae2019}, which are rather numerous as we will later discuss, immediately entail a stochastic regularity notion with the same modulus under mild conditions that are in most cases satisfied. 

\begin{lemma}\label{lem:RegEquivs}
Let $\tau:[0,\infty)\to [0,\infty)$ be convex and nondecreasing with $\tau(0)=0$ and $\tau(\varepsilon)>0$ for all $\varepsilon>0$, and let $D$ be a collection of random variables such that $\mathrm{dist}^\phi_{\mathrm{zer}F}(x)$ is integrable for all $x\in D$. Then, if $\tau$ is a modulus of $\phi$-regularity for $F$, it also a modulus of $\phi$-regularity for $F$ in mean w.r.t.\ $D$.
\end{lemma}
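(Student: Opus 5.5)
The plan is to pass from the pointwise growth inequality to the mean growth inequality via Jensen's inequality, and then invoke the equivalence recorded in Remark \ref{rem:meanGrowth}.

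\emph{Pointwise growth.} Since $\tau$ is nondecreasing with $\tau(0)=0$ and $\tau(\varepsilon)>0$ for $\varepsilon>0$, Remark \ref{rem:growth} applies to the given modulus of $\phi$-regularity for $F$. It yields
\[
F(y)\geq \tau(\mathrm{dist}^\phi_{\mathrm{zer}F}(y))\quad\text{for all } y\in X.
\]
Now fix $x\in D$. Applying this inequality pointwise along $x(\omega)$ gives $F(x)\geq \tau(\mathrm{dist}^\phi_{\mathrm{zer}F}(x))$ almost surely.

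\emph{Passing to expectations.} Both sides of this inequality are measurable and nonnegative. Measurability of the left side uses Assumption \ref{ass:mainProblem}. Measurability of $\mathrm{dist}^\phi_{\mathrm{zer}F}(x)$ comes from Lemma \ref{lem:measurable}, since $\mathrm{zer}F$ is closed and hence Borel, and composing with the Borel map $\tau$ preserves measurability. Taking expectations (allowing the value $+\infty$) therefore gives
\[
\EE[F(x)]\geq \EE[\tau(\mathrm{dist}^\phi_{\mathrm{zer}F}(x))].
\]

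\emph{Jensen's inequality.} By hypothesis $\mathrm{dist}^\phi_{\mathrm{zer}F}(x)$ is integrable, and $\tau$ is convex on $[0,\infty)$. Jensen's inequality then gives
\[
\EE[\tau(\mathrm{dist}^\phi_{\mathrm{zer}F}(x))]\geq \tau(\EE[\mathrm{dist}^\phi_{\mathrm{zer}F}(x)]).
\]
This step needs some care because $\tau(\mathrm{dist}^\phi_{\mathrm{zer}F}(x))$ need not be integrable. The workaround is to bound $\tau$ below by an affine support line $\ell$ at the point $m:=\EE[\mathrm{dist}^\phi_{\mathrm{zer}F}(x)]$. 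Such a line exists: a convex function on $[0,\infty)$ has a subgradient at every interior point $m>0$, and if $m=0$ the inequality is trivial because $\tau(0)=0$ and $\tau\geq 0$. We then have $\EE[\tau(\cdot)]\geq \EE[\ell(\cdot)]=\ell(m)=\tau(m)$, where the middle expectation is finite by integrability.

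\emph{Conclusion.} Combining the two displayed inequalities gives $\EE[F(x)]\geq \tau(\EE[\mathrm{dist}^\phi_{\mathrm{zer}F}(x)])$ for all $x\in D$. Since $\tau$ is nondecreasing, the equivalence in Remark \ref{rem:meanGrowth} converts this into the modulus property in mean. Explicitly, if $\EE[F(x)]<\tau(\varepsilon)$, then $\tau(\EE[\mathrm{dist}^\phi_{\mathrm{zer}F}(x)])<\tau(\varepsilon)$, and monotonicity forces $\EE[\mathrm{dist}^\phi_{\mathrm{zer}F}(x)]<\varepsilon$.

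The only delicate point is the Jensen step, specifically handling a possibly non-integrable $\tau(\mathrm{dist}^\phi_{\mathrm{zer}F}(x))$ and the boundary case $m=0$. The supporting-line argument above is what I would use to deal with both.
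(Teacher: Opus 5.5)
Your proposal is correct and follows essentially the same route as the paper: pointwise growth from Remark~\ref{rem:growth}, Jensen's inequality, then the monotonicity argument of Remark~\ref{rem:meanGrowth}. Your supporting-line justification of Jensen's inequality when $\tau(\mathrm{dist}^\phi_{\mathrm{zer}F}(x))$ may fail to be integrable, and your measurability remarks, make explicit details that the paper leaves implicit.
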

\begin{proof}
Let $\tau$ be a modulus of $\phi$-regularity for $F$. Using Remark \ref{rem:growth}, we in particular have $\tau(\mathrm{dist}^\phi_{\mathrm{zer}F}(x))\leq F(x)$ pointwise everywhere for all $x\in D$. Using that $\tau$ is convex, Jensen's inequality yields
\[
\tau(\EE[\mathrm{dist}^\phi_{\mathrm{zer}F}(x)])\leq \EE[\tau(\mathrm{dist}^\phi_{\mathrm{zer}F}(x))]\leq \EE[F(x)]
\]
for all $x\in D$. As in Remark \ref{rem:meanGrowth}, since $\tau$ is nondecreasing, this yields that $\tau$ is a modulus of $\phi$-regularity for $F$ in mean w.r.t.\ $D$.
\end{proof}

As this relation between pointwise and stochastic regularity has a crucial impact on the range of the stochastic theory laid out in this paper, a natural question is when such \emph{convex} moduli of regularity can be obtained. Interestingly, if $\tau$ satisfies a certain mild growth condition, then we can always guarantee this:

\begin{remark}
Suppose that $\tau:[0,\infty)\to [0,\infty)$ is strictly increasing with $\tau(0)=0$. If $\tau$ satisfies $\liminf_{x\to\infty}\tau(x)/x>0$, then its convex envelope
\[
\check{\tau}(x):=\sup\{f(x)\mid f\leq \tau\text{ is convex}\}
\]
is also strictly increasing. While this result seems folklore, we are not aware of a reference and so provide a proof below. However, first note that in such a case, by virtue of Remark \ref{rem:growth}, the convex envelope $\check{\tau}$ is also a modulus of $\phi$-regularity for $F$, provided $\tau$ was one, as we in particular have
\[
F(x)\geq \tau(\mathrm{dist}^\phi_{\mathrm{zer}F}(x))\geq \check{\tau}(\mathrm{dist}^\phi_{\mathrm{zer}F}(x))
\]
for all $x\in X$. Now, to see the above result, note that since $\liminf_{x\to\infty}\tau(x)/x>0$, we have $\tau(x)/x>c>0$ for all $x\geq x_0>0$ for some $c$ and $x_0$. Take $x_1\in (0,x_0)$ and define $m:=\min\{c,\tau(x_1)/(x_0-x_1)\}$. The function $f(x):=m(x-x_1)$ is clearly convex and further satisfies $f\leq \tau$. In particular, we therefore have $0<f(x)\leq \check{\tau}(x)$ for all $x\in (x_1,\infty)$ and as $x_1$ can be chosen arbitrarily close to $0$, we have $\check{\tau}(x)>0$ for all $x\in (0,\infty)$. This implies that $\check{\tau}$ is strictly increasing as for $0<x<y$, using convexity of $\check{\tau}$ and that $\check{\tau}(0)=0$, we have
\[
\check{\tau}(x)=\check{\tau}(\tfrac{x}{y}y)\leq \tfrac{x}{y}\check{\tau}(y)<\check{\tau}(y),
\]
using that $\check{\tau}(y)>0$.
\end{remark}

We return to the setting of the previous Example \ref{ex:problemEx} and now consider problems of the form $F(z):=\int h(t,z)\,d\mu(t)$ where $h:T\times X\to [0,\infty]$ is a Carath\'eodory function over a $\sigma$-finite measure space $(T,\mathsf{T},\mu)$. In that context, we can generally guarantee the existence of a stochastic modulus of regularity already under a relatively broad probabilistic condition, stating that $h$ has a pointwise modulus of regularity with positive probability. This property is readily checked in concrete scenarios, as we discuss briefly in Section \ref{sec:examples}, and in particular bears a resemblance to probabilistic regularity notions recently investigated by Asi and Duchi \cite{AsiDuchi2019} (see Sections 4.1 and 4.2 therein) or Combettes and Madariaga \cite{CombettesMadariaga2025} (see eq.\ (5.7) therein).

\begin{lemma}\label{lem-reg-combettes}
Let $(T,\mathsf{T},\mu)$ be a $\sigma$-finite measure space and let $h:T\times X\to [0,\infty]$ be a Carath\'eodory function, where we set $F(z):=\int h(t,z)\,d\mu(t)$. Suppose that $\tau,\sigma:[0,\infty)\to [0,\infty)$ are nondecreasing functions with $\tau(0),\sigma(0)=0$ and $\tau(\varepsilon),\sigma(\varepsilon)>0$ for $\varepsilon>0$. If
\[
\mu\left(\left\{t\in T\mid h(t,x)\geq \tau(\mathrm{dist}^\phi_{\mathrm{zer}F}(x))\right\}\right)\geq \sigma(\mathrm{dist}^\phi_{\mathrm{zer}F}(x))
\]
for all $x\in X$, then $(\sigma\cdot\tau)(\varepsilon):=\sigma(\varepsilon)\tau(\varepsilon)$ is a modulus of $\phi$-regularity for $F$. In particular, whenever $\tau$ and $\sigma$ are convex and $\mathrm{dist}^\phi_{\mathrm{zer}F}(x)$ is integrable for all $x\in D$, then $\sigma\cdot\tau$ is a modulus of $\phi$-regularity for $F$ in mean w.r.t.\ $D$.
\end{lemma}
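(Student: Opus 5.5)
The plan is to first establish the pointwise growth inequality
\[
F(x)\geq \sigma(\mathrm{dist}^\phi_{\mathrm{zer}F}(x))\,\tau(\mathrm{dist}^\phi_{\mathrm{zer}F}(x))\quad\text{for all }x\in X,
\]
and then invoke Remark \ref{rem:growth} (in its converse direction) together with Lemma \ref{lem:RegEquivs}. For fixed $x\in X$, write $\delta:=\mathrm{dist}^\phi_{\mathrm{zer}F}(x)$ and $A_x:=\{t\in T\mid h(t,x)\geq \tau(\delta)\}$. The set $A_x$ is measurable because $h(\cdot,x)$ is measurable, since $h$ is Carath\'eodory. As $h\geq 0$, we can bound
\[
F(x)=\int h(t,x)\,d\mu(t)\geq \int_{A_x}h(t,x)\,d\mu(t)\geq \tau(\delta)\mu(A_x)\geq \tau(\delta)\sigma(\delta).
\]
This inequality also holds trivially when $\delta=0$, since the right-hand side is then $0$, and when $F(x)=\infty$.

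Next, $\sigma\cdot\tau$ is nondecreasing, as it is a product of nonnegative nondecreasing functions. It vanishes at $0$ and is positive on $(0,\infty)$. Remark \ref{rem:growth} shows that such a growth condition with a nondecreasing function is equivalent to being a modulus of regularity. To be safe, I would rederive this directly. If $F(x)<\sigma(\varepsilon)\tau(\varepsilon)$, then $(\sigma\tau)(\delta)\leq F(x)<(\sigma\tau)(\varepsilon)$. Monotonicity then forces $\delta<\varepsilon$, because $\delta\geq\varepsilon$ would give $(\sigma\tau)(\delta)\geq(\sigma\tau)(\varepsilon)$. This proves the first claim.

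For the "in particular" part, I would apply Lemma \ref{lem:RegEquivs} to $\sigma\cdot\tau$. That lemma requires $\sigma\cdot\tau$ to be convex, nondecreasing, zero at $0$ and positive elsewhere; all of these except convexity are already established. The main obstacle is therefore showing that the product of convex functions is convex. This fails in general, but here we have additional structure: both functions are nonnegative and nondecreasing.

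I would handle convexity as follows. For nonnegative, nondecreasing, convex $f,g$ on $[0,\infty)$, one has the pointwise identity, for $\lambda\in[0,1]$ and $a,b\geq 0$,
\[
\lambda f(a)g(a)+(1-\lambda)f(b)g(b)-\bigl(\lambda f(a)+(1-\lambda)f(b)\bigr)\bigl(\lambda g(a)+(1-\lambda)g(b)\bigr)=\lambda(1-\lambda)(f(a)-f(b))(g(a)-g(b)).
\]
The right-hand side is $\geq 0$ because $f$ and $g$ are similarly ordered, both being nondecreasing. Convexity and nonnegativity of each factor then give
\[
\bigl(\lambda f(a)+(1-\lambda)f(b)\bigr)\bigl(\lambda g(a)+(1-\lambda)g(b)\bigr)\geq f(\lambda a+(1-\lambda)b)\,g(\lambda a+(1-\lambda)b).
\]
Combining the two yields convexity of $f g$. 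Lemma \ref{lem:RegEquivs} then applies, with the integrability of $\mathrm{dist}^\phi_{\mathrm{zer}F}(x)$ for $x\in D$ as assumed, and shows that $\sigma\cdot\tau$ is a modulus of $\phi$-regularity for $F$ in mean w.r.t.\ $D$.
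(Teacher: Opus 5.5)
Your proposal is correct and follows the paper's proof essentially step for step. You use the same lower bound $F(x)\geq\int_{S_x}h(t,x)\,d\mu(t)\geq\sigma(\mathrm{dist}^\phi_{\mathrm{zer}F}(x))\tau(\mathrm{dist}^\phi_{\mathrm{zer}F}(x))$, the converse direction of Remark \ref{rem:growth} via monotonicity of $\sigma\cdot\tau$, and Lemma \ref{lem:RegEquivs} for the in-mean part; the only difference is that you verify convexity of the product of nonnegative, nondecreasing, convex functions through the identity with $\lambda(1-\lambda)(f(a)-f(b))(g(a)-g(b))\geq 0$, which the paper simply cites as a standard fact.
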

\begin{proof}
Given $x\in X$, write $S_x$ for the set $\{t\in T\mid h(t,x)\geq \tau(\mathrm{dist}^\phi_{\mathrm{zer}F}(x))\}$. Then we have
\[
F(x)\geq \int_{S_x}h(t,x)\, d\mu(t)\geq \int_{S_x}\tau(\mathrm{dist}^\phi_{\mathrm{zer}F}(x))\, d\mu(t)\geq \mu(S_x)\cdot \tau(\mathrm{dist}^\phi_{\mathrm{zer}F}(x))\geq (\sigma\cdot \tau)(\mathrm{dist}^\phi_{\mathrm{zer}F}(x))
\]
which yields the first part using Remark \ref{rem:growth}, as $\sigma\cdot\tau$ must also be nondecreasing. The second part then follows from Lemma \ref{lem:RegEquivs} using the standard fact that the product of two convex, nondecreasing, nonnegative functions is convex.
\end{proof}

\begin{remark}
Let $(T,\mathsf{T},\mu)$ be a probability space. In the special case of $\sigma$ defined as $\sigma(0):=0$ and $\sigma(\varepsilon):=p$ for $\varepsilon>0$, given a $p\in (0,1]$, Lemma \ref{lem-reg-combettes} represents the following stochastic variant of Lemma \ref{lem:RegEquivs}: If, for any $x\in X$, we have $h(t,x)\geq \tau(\mathrm{dist}^\phi_{\mathrm{zer}F}(x))$ with probability $p$ w.r.t.\ $\mu$, where $\tau$ is a suitable (in particular convex) function, then $p\cdot \tau$ is a modulus of $\phi$-regularity for $F$ in mean w.r.t.\ $D$.
\end{remark}

\subsection{Examples from practice}\label{sec:examples}

There are numerous concrete instantiations of the abstract notions of regularity presented above. As already highlighted, by virtue of Lemma \ref{lem:RegEquivs}, essentially all of the examples studied in \cite{KohlenbachLopezAcedoNicolae2019} immediately lift to the stochastic setting. Among others, these encompass:

\begin{itemize}
\item \emph{Fixed point problems}, formalized via $F(x):=d(Tx,x)$ for some measurable mapping $T:X\to X$ such that $\mathrm{Fix}T\neq\emptyset$ is closed. Here, explicit pointwise moduli of regularity can be constructed when, e.g., $T$ is a quasi-contraction, a continuous orbital contraction, a retraction onto a subset of $X$ or the composition of reflected resolvents for convex semi-algebraic sets in the sense of \cite{BorweinLiTam2017}. Explicit constructions of such pointwise moduli are given in Example 3.6 in \cite{KohlenbachLopezAcedoNicolae2019} (with quasi-contractivity being a simple modification of the construction given therein). Moreover, these moduli of regularity are all linear, and hence in particular convex, and so lift to regularity in mean by Lemma \ref{lem:RegEquivs}.\smallskip
\item \emph{Minimization problems}, formalized via $F(x):=f(x)-\min f$ for some measurable function $f:X\to (-\infty,+\infty]$ such that $\mathrm{argmin}f\neq\emptyset$ is closed. Here, explicit moduli of regularity can be constructed when, e.g., $f$ has a $\tau$-global weak sharp minimum for some strictly increasing $\tau:[0,\infty)\to [0,\infty)$ with $\tau(0)=0$, i.e.\
\[
f(x)- \min f\geq\tau(\mathrm{dist}_{\mathrm{argmin}f}(x))\text{ for all }x\in X.
\]
Concretely, $\tau$ is then immediately a pointwise modulus of regularity. This notion was introduced in \cite{KohlenbachLopezAcedoNicolae2019} (see Example 3.7 therein), extending the well-known notion of weak sharp minima \cite{BurkeFerris1993} (see also \cite{BurkeDeng2002,Ferris1991,LiMordukhovichWangYao2011}). Inverses of this property, that is increasing functions $\tilde{\tau}$ such that
\[
\tilde{\tau}(f(x)- \min f)\geq \mathrm{dist}_{\mathrm{argmin}f}(x)\text{ for all }x\in X,
\]
are also known as error bounds \cite{BolteNguyenPeypouquetSuter2017,DrusvyatskiyLewis2018}. In particular, weak sharp minima as defined in \cite{BurkeFerris1993} arise from the above by considering $\tau(\varepsilon)=k\varepsilon$ for $k>0$, so that this immediately induces regularity in mean by Lemma \ref{lem:RegEquivs}. However, by that lemma, the above pointwise property of course induces a regularity property in mean also for more general convex moduli $\tau$, which in particular further encompasses polynomial growth conditions (see e.g.\ \cite{Shapiro1994} among many others), that is
\[
c(f(x)- \min f)\geq (\mathrm{dist}_{\mathrm{argmin}f}(x))^\theta\text{ for all }x\in X,
\]
for some $c>0$ and $\theta \geq 1$. The quadratic growth condition $\theta=2$ in particular is closely related (and under suitable condition equivalent) to the well-known Polyak-{\L}ojasiewicz, or more generally Kurdyka-{\L}ojasiewicz conditions, where we refer e.g.\ to \cite{BolteDaniilidisLeyMazet2010,BolteNguyenPeypouquetSuter2017,Zhang2020}. A particular situation where a function has weak sharp minima, and which moreover guarantees uniqueness of the solution, is e.g.\ when $f$ is uniformly (and hence in particular strongly) quasiconvex.\smallskip
\item \emph{Set-valued inclusion problems}, formalized via $F(x):=\mathrm{dist}(O_Y,A(x))$ for $A:X\to 2^Y$ where $X$, $Y$ are two given metric spaces and $O_Y\in Y$ is a designated point such that $A^{-1}(O_Y):=\{x\in X\mid O_Y\in A(x)\}\neq\emptyset$ is closed. Here, explicit moduli of regularity can be constructed when e.g. $A$ is $\tau$-global metrically subregular for some strictly increasing $\tau:[0,\infty)\to [0,\infty)$ with $\tau(0)=0$, i.e.\
\[
\mathrm{dist}_{A(x)}(O_Y)\geq \tau(\mathrm{dist}_{A^{-1}(O_Y)}(x))\text{ for all }x\in X.
\]
Concretely, $\tau$ is then immediately a pointwise modulus of regularity. A local variant of this generalized notion of metric subregularity was studied recently in \cite{LiMordukhovichZhu2026}, extending the well-known notion of (local) metric subregularity \cite{Leventhal2009,DontchevRockafellar2009} (a global variant of which was studied, in the context of stochastic iterations, in \cite{HermerLukeSturm2019}). In particular, this usual notion of metric subregularity arises from the above by considering $\tau(\varepsilon)=\varepsilon/k$ for $k>0$, which immediately implies regularity in mean by Lemma \ref{lem:RegEquivs}. However, as before this is not limited to that case but holds more generally whenever $\tau$ is convex, in particular encompassing polynomial growth conditions similar to the above, that is
\[
c\mathrm{dist}_{A(x)}(O_Y)\geq (\mathrm{dist}_{A^{-1}(O_Y)}(x))^\theta\text{ for all }x\in X,
\]
for some $c>0$ and $\theta \geq 1$, and we refer again to \cite{BolteDaniilidisLeyMazet2010,BolteNguyenPeypouquetSuter2017,Zhang2020} for discussions of such conditions for subgradients and other set-valued operators as well as their relation to Polyak-{\L}ojasiewicz or Kurdyka-{\L}ojasiewicz conditions. Another case where such a regularity conditions for set-valued inclusions arise naturally, and moreover guarantee uniqueness of the solution, is when, over Banach spaces, the operator is $\tau$-uniformly accretive for a strictly increasing (convex) $\tau$ as above, covering in particular strongly accretive operators (see Example 3.9 in \cite{KohlenbachLopezAcedoNicolae2019}). This moreover applies to other notions of monotonicity for set-valued operators, in particular uniformly and strongly monotone vector fields over Hadamard manifolds (see e.g.\ \cite{LiLopezMartinMarquez2009}) or Hadamard spaces (see \cite{ChaipunyaKohsakaKumam2021}).
\end{itemize}

These examples already encompass many of the standard regularity assumptions from deterministic and stochastic optimization. In the setting of stochastic optimization, some further problem formulations and associated regularity notions, which are of a genuinely probabilistic nature, feature prominently, and we now illustrate how some of these examples fit into our general notion:

\begin{itemize}
\item Consider the generic (sometimes called online) stochastic minimization problem
\[
\mbox{find some minimizer of }\underline{f}(x):=\int f(e,x)\,d\mu(e)
\]
for some suitable\footnote{A very common assumption on $f$ in particular is that $f$ is a normal convex integrand, that is $f$ is $\mathsf{E}\otimes\mathsf{B}(X)$-measurable and $f(e,\cdot)$ is proper, lower-semicontinuous and convex, which we discuss in our applications later.} function $f:E\times X\to (-\infty,+\infty]$, over some suitable probability space $(E,\mathsf{E},\mu)$. Assume that a minimizer $z\in\mathrm{argmin}\underline{f}$ exists. Instead of requiring a direct growth condition on $\underline{f}$, such as quadratic growth considered e.g.\ in \cite{Shapiro1994}, or the more general conditions surveyed above, which in particular leads to associated expected growth conditions (recall Lemma \ref{lem:RegEquivs}), one can assume a more pointwise and probabilistic condition. Assume for this that the above $z$ is actually a minimizer almost everywhere, that is 
\[
f(e,z)=\inf_{x\in X}f(e,x)\text{ for $\mu$-almost every $e$.}
\]
Problems satisfying this condition are called \emph{interpolation problems} in \cite{AsiChadhaChengDuchi2020} (or \emph{easy problems} in \cite{AsiDuchi2019}, where it is however required that the above condition holds for all solutions $z$; see also e.g.\ \cite{SchmidtLeRoux2013} for similar such conditions). We can then consider the following probabilistic growth condition
\[
\mu\left(\left\{e\in E\mid f(e,x)-f(e,z)\geq \tau(\mathrm{dist}_{\mathrm{argmin}\underline{f}}(x))\right\}\right)\geq p\text{ for all }x\in X,
\]
for some $p\in (0,1]$ and a convex strictly increasing function $\tau:[0,\infty)\to [0,\infty)$ with $\tau(0)=0$, i.e.\ that $f$ satisfies a growth condition induced by $\tau$ with non-zero probability. By Lemma \ref{lem-reg-combettes} (setting $h(e,x):=f(e,x)-f(e,z)$), we thereby in particular get that $p\cdot\tau$ is a modulus of regularity for $F(x):=\underline{f}(x)-\min\underline{f}$ in mean.

This in particular generalizes examples surveyed in \cite{AsiDuchi2019} (see in particular Sections 4.1 and 4.2 therein), where the special cases of $\tau(\varepsilon):=\lambda\varepsilon$ or $\tau(\varepsilon):=\lambda\varepsilon^2$ are considered, which are reasonably easy to check in some situations as discussed in \cite{AsiDuchi2019}, in particular including (see Section 4.3 in \cite{AsiDuchi2019}) overdetermined linear systems, data interpolation problems, or convex feasibility problems for suitable sets (such as halfspaces \cite{AsiDuchi2019}, or more generally convex semi-algebraic sets \cite{BorweinLiTam2017}, as discussed also above).\smallskip
\item Consider the generic stochastic common fixed point problem
\[
\mbox{find some point $z$ such that }T_kz=z \ \PP\mbox{-a.s.}
\]
for some suitable\footnote{A common assumption might be that each $T_k$ is continuous (e.g.\ nonexpansive), and that the function $(k,x)\mapsto T_kx$ is $\mathsf{K}\otimes\mathsf{B}(X)$/$\mathsf{B}(X)$-measurable, as we discuss in our applications later.} family of mappings $(T_k)_{k\in K}$ over some measurable space $(K,\mathsf{K})$ and a $K$-valued random variable $k:\Omega\to K$ over a probability space $(\Omega,\mathsf{F},\PP)$. Assuming that a solution $z\in \mathrm{Fix}T:=\{z\in X\mid T_kz=z \ \PP\mbox{-a.s.}\}$ exists, we can consider the probabilistic condition
\[
\PP\left(\left\{\omega\in \Omega\mid d^2(T_{k(\omega)}x,x)\geq \tau(\mathrm{dist}^{2}_{\mathrm{Fix}T}(x))\right\}\right)\geq p\text{ for all }x\in X,
\]
for some $p\in (0,1]$ and a convex strictly increasing function $\tau:[0,\infty)\to [0,\infty)$ with $\tau(0)=0$, similar to before. Again, by Lemma \ref{lem-reg-combettes} (setting $h(\omega,x):=d^2(T_kx,x)$), we thereby in particular get that $p\cdot\tau$ is a modulus of regularity for $F(x):=\EE[d^2(T_kx,x)]$ in mean. For a linear function $\tau(\varepsilon):=\varepsilon/v$, given some $v\in [1,\infty)$, the above is a probabilistic variant of the common assumption of pointwise \emph{linear regularity} of $(T_k)$ in the sense of
\[
\mathrm{dist}^2_{\mathrm{Fix}T}(x)\leq v\EE[d^2(T_kx,x)]\text{ for all }x\in X,
\]
as e.g.\ recently considered in the work of Combettes and Madariaga (cf.\ condition (5.10) in \cite{CombettesMadariaga2025}, and also Remark 5.6 of that paper which discusses related literature in which variants of this regularity notion appear), which by Lemma \ref{lem:RegEquivs} of course also immediately induces a resulting regularity modulus for $F$ in mean. A simple example in which linear regularity is achieved is for finite families $T_1,\dots,T_N$ of nonexpansive mappings where $k:\Omega\to \{1,\dots,N\}$ is some random variable with $0<p_i:=\PP(E_i)$ for $E_i:=\{k=i\}$ for all $i=1,\dots,N$. Then $\mathrm{Fix}T=\bigcap_{i=1}^N \mathrm{Fix}T_i$
and linear regularity follows, for example, from the piecewise property
\[
\forall x\in X\left(\mathrm{dist}_{\mathrm{zer}F}(x)\leq ad(T_ix,x)\text{ for some $i\in \{1,\dots,N\}$}\right),
\]
which by Lemma \ref{lem-reg-combettes} yields linear regularity with $v:=a^2/\mu$ for $\mu:=\min_{i\in \{1,\dots,N\}}p_i$.
\end{itemize}

\section{Rates for monotone stochastic processes under regularity conditions}\label{sec:rates}

We now utilise moduli of regularity in mean in order to derive explicit rates of convergence for stochastic methods that solve problems of the form $\mathrm{zer}F$. Here, we will focus on methods that are suitably monotone.

\subsection{Stochastic quasi-Fej\'er monotonicity}

Our central notion is motivated by the property of stochastic quasi-Fej\'er monotonicity:

\begin{definition}[Stochastic quasi-Fej\'er monotonicity]
\label{def:fejer}
Let $(\mathsf{F}_n)$ be a filtration and let $(x_n)$ be an $X$-valued stochastic process adapted to $(\mathsf{F}_n)$. Then $(x_n)$ is called stochastically $\phi$-quasi-Fej\'er monotone w.r.t.\ $S\subseteq X$ and $(\mathsf{F}_n)$ if 
\[
\EE[\phi(z,x_{n+1})\mid\mathsf{F}_n]\leq (1+\zeta_n)\phi(z,x_n)+\xi_n\text{ a.s.}
\]
for all $n\in\mathbb{N}$ and all $z\in S$, where $(\zeta_n),(\xi_n)\in\ell^1_+(\mathsf{F}_n)$.
\end{definition}

Already in the deterministic context, quasi-Fej\'er monotonicity is one of the most fundamental concepts in the modern study of numerical algorithms (see e.g.\ \cite{Combettes2001,Combettes2009}), and this notion retains its relevance in stochastic contexts, where it is typically also referred to simply as a ``supermartingale property'', and indeed the convergence theory of supermartingales fundamentally underlies this notion (in particular the Robbins-Siegmund theorem \cite{RobbinsSiegmund1971}). In Euclidean contexts, this stochastic notion appears already in the pioneering works of Ermol'ev \cite{Ermolev1969,Ermolev1971,ErmolevTuniev1973} together with a wide theory. In the general context of Hilbert spaces, central convergence results (almost surely and in mean) are then presented and streamlined in the seminal work of Combettes and Pesquet \cite{CombettesPesquet2015,CombettesPesquet2019}, which were extended to the metric context of nonlinear Hadamard spaces in the recent work \cite{Pischke2026} (see also \cite{NeriPischkePowell2025,NeriPischkePowell2026} for different metric considerations). 

As it appears above, this general notion of stochastic quasi-Fej\'er monotonicity was already investigated in the preceding work \cite{NeriPischkePowell2025}. While this notion proved suitable for constructing rates of convergence under uniqueness conditions, it turns out that the generality gained by the abstract regularity notions considered here, which in particular allow for non-unique problems, requires us to consider a strengthened version of the above quasi-Fej\'er monotonicity property, where we allow $z$ to be an $S$-valued $\mathsf{F}_n$-measurable random variable:

\begin{definition}[Strong stochastic quasi-Fej\'er monotonicity]
\label{def:fejerStrong}
Let $(\mathsf{F}_n)$ be a filtration and let $(x_n)$ be an $X$-valued stochastic process adapted to $(\mathsf{F}_n)$. Then $(x_n)$ is called strongly stochastically $\phi$-quasi-Fej\'er monotone w.r.t.\ $S\subseteq X$ and $(\mathsf{F}_n)$ if 
\[
\EE[\phi(z,x_{n+1})\mid\mathsf{F}_n]\leq (1+\zeta_n)\phi(z,x_n)+\xi_n\text{ a.s.}
\]
for all $n\in\mathbb{N}$ and all $S$-valued $\mathsf{F}_n$-measurable random variables $z$ with $z\in S$ a.s.\ such that $\EE[\phi(z,x_n)]<\infty$, where $(\zeta_n),(\xi_n)\in\ell^1_+(\mathsf{F}_n)$.
\end{definition}

As we will see in Section \ref{sec:applications} below, many standard methods satisfy our strong stochastic quasi-Fej\'er monotonicity property outright, virtue of the fact that they naturally satisfy an even stronger pointwise inequality. However, we can further justify the reach of our strong stochastic quasi-Fej\'er property by showing that it immediately arises from the standard property under a relaxed variant of the triangle inequality for $\phi$, which is satisfied in many cases. In this context, we in particular generalise an argument set out in \cite{CombettesMadariaga2025} to establish a similar fact, tailored to a specific iteration at hand, in Hilbert spaces and for $\phi=\norm{\cdot}^2$ (contained in their Proposition 2.4 and Theorem 3.2 (iii)).

\begin{definition}[Weak quasi-triangle inequality]
A distance $\phi$ is satisfies the weak quasi-triangle inequality if there is a concave and nondecreasing function $H:[0,\infty)\to [0,\infty)$ such that
\[
\phi(x,y)\leq H(\phi(x,o)+\phi(y,o))
\]
for any $x,y,o\in X$.
\end{definition}

Related notions are e.g.\ studied in \cite{Grasmair2010} in the context of generalized distance functions. We here restrict ourselves to the canonical examples of $p$-th orders of the metric, similar to Example 3.1 in \cite{Grasmair2010}, and certain examples of Bregman distances.

\begin{example}
The following distance functions satisfy the weak quasi-triangle inequality:
\begin{enumerate}
\item In the case that $\phi=d^q$ for some $q\geq 1$, it follows by the (discrete) Jensen's inequality that
\[
d^q(x,y)\leq 2^{q-1}(d^q(x,o)+d^q(y,o))
\]
for all $x,y,o\in X$. Hence, $\phi=d^q$ satisfies the weak quasi-triangle inequality with function $H(a):= 2^{q-1}a$.
\item In the case that $\phi=D_f$ over a reflexive Banach space $(X,\norm{\cdot})$, where $f:X\to\mathbb{R}$ is lsc, convex and Fr\'echet differentiable on $X$, assume further that there are nondecreasing functions $\theta,\Theta:(0,\infty)\to (0,\infty)$ such that $D_f(x,y)\geq\theta(b)\norm{x-y}^2$ and $\norm{\nabla f(x)-\nabla f(y)}\leq\Theta(b)\norm{x-y}$ for any $b>0$ and $x,y\in \overline{B}_b(0)$. Such functions are (essentially) considered over Euclidean spaces in particular in \cite{BauschkeLewis2000}, where it is shown that they exist whenever $f$ is \emph{very strictly convex}. Related discussions for the infinite dimensional case can be found in \cite{PintoPischke2026}. In particular, by Lemma 2.5 in \cite{PintoPischke2026}, the latter condition on the gradient in particular implies that $D_f(x,y)\leq\Theta(b)\norm{x-y}^2$ for any $b>0$ and $x,y\in \overline{B}_b(0)$. As such, for $b>0$ and $x,y,o\in \overline{B}_b(0)$, we ultimately have
\begin{align*}
D_f(x,y)&\leq \Theta(b)\norm{x-y}^2\\
&\leq \frac{2\Theta(b)}{\theta(b)}\left(\theta(b)\norm{x-o}^2+\theta(b)\norm{y-o}^2\right)\\
&\leq \frac{2\Theta(b)}{\theta(b)}(D_f(x,o)+D_f(y,o)),
\end{align*}
using in particular also item (1) above. Hence, in such a case, $\phi=D_f$ satisfies the weak quasi-triangle inequality on every ball $\overline{B}_b(0)\subseteq X$ with function $H(a):= (2\Theta(b)/\theta(b))a$.
\end{enumerate}
\end{example}

We now give our central result on the relation between stochastic quasi-Fej\'er monotone and strongly stochastic quasi-Fej\'er monotone processes.

\begin{proposition}\label{prop:fejerUpgrade}
Let $X$ be a separable and complete metric space and let $S\subseteq X$ be non-empty and closed. Further, let $(x_n)$ be an $X$-valued stochastic process adapted to $(\mathsf{F}_n)$ which is stochastically $\phi$-quasi-Fej\'er monotone w.r.t.\ $S$ and $(\mathsf{F}_n)$. Suppose that $\EE[\phi(o,x_n)]<\infty$ for all $n\in\mathbb{N}$, where $o\in S$ is fixed. If $\phi$ is such that $\phi(x,x)=0$ for all $x\in X$, and $\phi$ satisfies the weak quasi-triangle inequality with a concave and nondecreasing function $H$, then $(x_n)$ is strongly stochastically $\phi$-quasi-Fej\'er monotone w.r.t.\ $S$ and $(\mathsf{F}_n)$.
\end{proposition}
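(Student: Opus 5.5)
We need to show that for an $\mathsf{F}_n$-measurable $z$ with $z\in S$ a.s.\ and $\EE[\phi(z,x_n)]<\infty$, the Fej\'er inequality holds with $z$ in place of a deterministic point. The plan is to approximate $z$ by countably-valued $\mathsf{F}_n$-measurable random variables and then pass to the limit. The approximants should be countably-valued with values in $S$, should converge pointwise to $z$, and should keep enough integrability to make the conditional expectations finite.

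\emph{Countably-valued case.} Let $z=\sum_k \mathbf{1}_{A_k} s_k$ with $A_k\in\mathsf{F}_n$ a partition and $s_k\in S$. Then
\[
\EE[\phi(z,x_{n+1})\mid\mathsf{F}_n]=\sum_k \mathbf{1}_{A_k}\EE[\phi(s_k,x_{n+1})\mid\mathsf{F}_n]\quad\text{a.s.}
\]
This identity holds for nonnegative variables via generalized conditional expectation, pulling out the $\mathsf{F}_n$-measurable indicators. Applying the hypothesis to each $s_k$ on $A_k$ gives
\[
\EE[\phi(z,x_{n+1})\mid\mathsf{F}_n]\le (1+\zeta_n)\phi(z,x_n)+\xi_n\quad\text{a.s.}
\]
Integrability is only needed for the general case.

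\emph{Approximation.} Fix a countable dense set $\{d_j\}\subseteq S$; this uses that $S$ is separable, being a subset of a separable metric space. Define $z_m:=d_{j(m,\omega)}$, where $j(m,\omega)$ is the least $j$ with $d(d_j,z(\omega))<1/m$. Then $z_m$ is $\mathsf{F}_n$-measurable, $S$-valued and countably-valued, and $z_m\to z$ pointwise on $\{z\in S\}$. Continuity of $\phi$ in its left argument (Assumption \ref{ass:main}) gives $\phi(z_m,x_k)\to\phi(z,x_k)$ for $k=n,n+1$. Fatou's lemma for conditional expectations then yields
\[
\EE[\phi(z,x_{n+1})\mid\mathsf{F}_n]\le\liminf_m \EE[\phi(z_m,x_{n+1})\mid\mathsf{F}_n]\le \liminf_m\big((1+\zeta_n)\phi(z_m,x_n)+\xi_n\big)=(1+\zeta_n)\phi(z,x_n)+\xi_n .
\]
This already gives the claim, but only with generalized (possibly infinite-valued) conditional expectations. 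To show that $\EE[\phi(z,x_{n+1})\mid\mathsf{F}_n]$ is a genuine, finite conditional expectation, I would use the weak quasi-triangle inequality. It gives
\[
\phi(z,x_{n+1})\le H(\phi(z,o)+\phi(x_{n+1},o)).
\]
Concavity makes $H$ sublinear up to an affine bound, $H(a)\le H(0)+ca$ for some $c$ (since a concave nondecreasing function is dominated by any of its tangent lines). Moreover $\phi(z,o)\le H(\phi(z,x_n)+\phi(o,x_n))$, which is integrable because $\EE[\phi(z,x_n)]<\infty$ and $\EE[\phi(o,x_n)]<\infty$. The term $\phi(x_{n+1},o)$ has the arguments in the wrong order. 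I would control it by $H(\phi(x_{n+1},x_{n+1})+\phi(o,x_{n+1}))=H(\phi(o,x_{n+1}))$, taking the midpoint to be $x_{n+1}$ and using $\phi(x,x)=0$. Finally, $\EE[\phi(o,x_{n+1})]<\infty$ by assumption, so $\phi(z,x_{n+1})$ is integrable.

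\emph{Expected obstacle.} The main difficulty is this integrability bookkeeping, together with the ordering of arguments in $\phi$, which need not be symmetric. If the affine domination of $H$ or the truncation needed to apply conditional Fatou causes trouble, I would localize instead. On the $\mathsf{F}_n$-sets $\{\phi(z,x_n)\le K\}$ I would prove the inequality multiplied by the indicator, where everything is bounded by $H$ of integrable quantities, and then let $K\to\infty$.
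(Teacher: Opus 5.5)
Your proof is correct, but it takes a different route from the paper's.

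\textbf{How the paper argues.} It approximates $z$ by \emph{finitely}-valued $\mathsf{F}_n$-simple functions $z_k$. These are built from a dense sequence $(p_k)$ in $S$ with an extra constraint $\phi(p_i,o)\le\phi(z,o)+1$, imposed through the index sets $I_{k,y}$ and a special choice of $p_0$. The constraint is there so that $\phi(z_k,x_{n+1})\le H(\phi(z,o)+H(\phi(o,x_{n+1}))+1)$ provides an integrable dominating variable. The paper then passes to the limit by conditional dominated convergence.

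\textbf{How you argue.} You allow \emph{countably}-valued approximants; the partition step is justified by conditional monotone convergence for nonnegative variables. You then pass to the limit with conditional Fatou, which needs no domination, so your naive nearest-index construction suffices.

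\textbf{What each route buys.} Your route shows that the inequality itself holds using only three ingredients, with $[0,\infty]$-valued conditional expectations:
\begin{itemize}
\item left-continuity of $\phi$;
\item separability of $S$;
\item the Fej\'er hypothesis at deterministic points.
\end{itemize}
In particular, the inequality needs neither $\EE[\phi(z,x_n)]<\infty$ nor the weak quasi-triangle inequality. Those assumptions, together with $\phi(x,x)=0$ and $\EE[\phi(o,x_{n+1})]<\infty$, are used only to show that $\phi(z,x_{n+1})$ is integrable, so that the conditional expectation is an ordinary one. The paper's domination delivers integrability and the limit passage in one stroke. The price is the more delicate approximation scheme, and it obscures that most hypotheses serve only integrability.

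\textbf{Two small fixes.}
\begin{itemize}
\item The bound $H(a)\le H(0)+ca$ is false in general: $H=\sqrt{\cdot}$ is concave and nondecreasing but has infinite slope at $0$. Use a tangent at an interior point instead, $H(a)\le H(a_0)+H'_+(a_0)(a-a_0)$ for some $a_0>0$. Alternatively, use Jensen's inequality $\EE[H(Y)]\le H(\EE[Y])$ as the paper does.
\item If you only assume $z\in S$ a.s., define $z_m$ arbitrarily (say $z_m:=d_0$) on the null set $\{z\notin S\}$. There your least index $j(m,\omega)$ need not exist.
\end{itemize}
With the Fatou argument in place, the localization fallback you mention is unnecessary.
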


\begin{proof}
We start by showing that the strong stochastic quasi-Fej\'er property holds for $S$-valued $\mathsf{F}_n$-simple functions $z$, that is random variables $z$ such that there are $z_0,\dots,z_k\in S$ and disjoint $A_0,\dots,A_k\in \mathsf{F}_n$ with $\bigcup_{i=0}^k A_i=\Omega$ and $z(\omega)=z_i$ if, and only if, $\omega\in A_i$. To see this, we observe for such $z$ that
\begin{align*}
\EE[\phi(z,x_{n+1})\mid\mathsf{F}_n]&=\EE\left[\sum_{i=0}^k\phi(z_i,x_{n+1})\mathbf{1}_{A_i}\mid\mathsf{F}_n\right]\\
&=\sum_{i=0}^k\EE[\phi(z_i,x_{n+1})\mid\mathsf{F}_n] \mathbf{1}_{A_i}\\
&\leq (1+\zeta_n)\sum_{i=0}^k \phi(z_i,x_n) \mathbf{1}_{A_i}+\xi_n\\
&=(1+\zeta_n)\phi(z,x_n)+\xi_n,
\end{align*}
where for the second equality, we used that $A_i\in \mathsf{F}_n$. Next, we show that for any $S$-valued $\mathsf{F}_n$-measurable $z$ such that $\EE[\phi(z,x_n)]<\infty$, there exists a sequence $(z_k)$ of $S$-valued $\mathsf{F}_n$-simple functions that converge to $z$ a.s.\ and satisfy $\sup_{k\in\NN}\phi(z_k,o)\leq \phi(z,o)+1$ a.s. To this end, write $\psi(x):=\phi(x,o)$ and let $(p_k)$ be a countable dense subset of $S$ where $p_0$ is chosen to satisfy $\psi(p_0)\leq \inf_{y\in S}\psi(y)+1$, and define for $k\in\mathbb{N}$ and $y\in S$ the set $I_{k,y}\subset\mathbb{N}$ by
\[
I_{k,y}:=\{i\in \{0,\dots,k\}\mid \psi(p_i)\leq \psi(y)+1\},
\]
noting that $0\in I_{k,y}$ for all $k$ and $y$. Now, fixing such a $S$-valued $\mathsf{F}_n$-measurable $z$ with $\EE[\phi(z,x_n)]<\infty$, for each $k\in\mathbb{N}$, define $(A_i^{k,z})$ for $i\in \{0,\dots,k\}$ by
\begin{align*}
A^{k,z}_0&:=\left\{\omega\in\Omega\mid d(z(\omega),p_0)=\min_{j\in I_{k,z(\omega)}} d(z(\omega),p_j)\right\},\\
A^{k,z}_i&:=\left\{\omega\in\Omega\mid i\in I_{k,z(\omega)}\text{ and } \min_{j\in I_{i-1,z(\omega)}}d(z(\omega),p_j)>d(z(\omega),p_i)=\min_{j\in I_{k,z(\omega)}} d(z(\omega),p_j)\right\},
\end{align*}
noting that some of these sets might be empty. It is easy to check that $A_i^{k,z}\in \mathsf{F}_n$ when $z$ is $\mathsf{F}_n$-measurable, using also that $\psi$ is continuous, and moreover we clearly have $\bigcup_{i=0}^k A_i^{k,z}=\Omega$ where this is a union of disjoint sets. Therefore for $k\in\NN$, defining $z_k(\omega):=p_i$ if, and only if, $\omega\in A^{k,z}_i$, we have that $z_k$ is an $S$-valued $\mathsf{F}_n$-simple function. We see by definition that for any $\omega\in\Omega$ we have $\psi(z_k(\omega))\leq \psi(z(\omega))+1$, and moreover
\[
z_k(\omega)=p_{i_k} \text{ for the least $i_k\in I_{k,z(\omega)}$ such that $d(z(\omega),p_{i_k})=\min_{j\in I_{k,z(\omega)}}d(z(\omega),p_j)$},
\]
and since $(p_k)$ is dense in $S$ and $\psi$ is continuous, we have $z_k(\omega)\to z(\omega)$. We now finish the proof by combining the first two steps. Fixing an $S$-valued $\mathsf{F}_n$-measurable $z$ with $\EE[\phi(z,x_n)]<\infty$, and an approximating sequence $(z_k)$ as above, it follows by the first part that
\[
\EE[\phi(z_k,x_{n+1})\mid\mathsf{F}_n]\leq (1+\zeta_n)\phi(z_k,x_n)+\xi_n \ \ \text{a.s.}\label{eq:fejer}\tag{$\ast$}
\]
for any $n,k\in \mathbb{N}$. Using the weak quasi-triangle inequality for $\phi$, we get 
\[
\phi(z,o)\leq H(\phi(z,x_n)+\phi(o,x_n)),
\]
which, using concavity of $H$, yields
\[
\EE[\phi(z,o)]\leq H(\EE[\phi(z,x_n)]+\EE[\phi(o,x_n)])<\infty.
\]
Using the weak quasi-triangle inequality again, together with $\phi(x,x)=0$ for all $x\in X$, we get 
\[
\phi(x_{n+1},o)\leq H(\phi(x_{n+1},x_{n+1})+\phi(o,x_{n+1}))=H(\phi(o,x_{n+1}))
\]
and using the quasi-triangle inequality a third time, as well as that $H$ is nondecreasing, we have 
\begin{align*}
\phi(z_k,x_{n+1})&\leq H(\phi(z_k,o)+\phi(x_{n+1},o))\\
&\leq H(\phi(z,o)+\phi(x_{n+1},o)+1)\\
&\leq H(\phi(z,o)+H(\phi(o,x_{n+1}))+1)=:Y \ \ \text{a.s.}
\end{align*}
So, using the concavity of $H$ twice, as well as that $H$ is nondecreasing, we get
\[
\EE[Y]\leq H(\EE[\phi(z,o)]+H(\EE[\phi(o,x_{n+1})])+1)<\infty
\]
from the above, together with our integrability assumptions. Since $z_k\to z$ and thus $\phi(z_k,x_{n+1})\to \phi(z,x_{n+1})$ a.s.\ by left continuity of $\phi$, by the conditional dominated convergence theorem we have 
\[
\EE[\phi(z_k,x_{n+1})\mid\mathsf{F}_n]\to \EE[\phi(z,x_{n+1})\mid\mathsf{F}_n] \ \ \text{a.s.}
\]
and so taking limits in \eqref{eq:fejer}, using also that $\phi(z_k,x_n)\to \phi(z,x_n)$ a.s., the result is obtained.
\end{proof}

\begin{remark}
The above Proposition \ref{prop:fejerUpgrade} features the assumption that $\EE[\phi(o,x_n)]<\infty$ for all $n\in\mathbb{N}$ and some fixed $o\in S$. This is immediately guaranteed in essentially all cases via the stochastic $\phi$-quasi-Fej\'er monotonicity w.r.t.\ $S$, provided that the initial value $x_0$ of the process satisfies $\EE[\phi(o,x_0)]<\infty$ itself.
\end{remark}

\subsection{Approximation properties and effective rates of convergence}

In the context of a stochastic regularity condition as considered in the previous section, we can now guarantee the convergence of strongly stochastic quasi-Fej\'er monotone processes under a very mild asymptotic approximation property, which takes the following (quantitative) form:

\begin{definition}[$\liminf$-property in mean]
Let $F:X\to [0,\infty]$ be measurable. An $X$-valued stochastic process $(x_n)$ has the $\liminf$-property in mean w.r.t.\ $F$ if $\liminf_{n\to\infty}\EE[F(x_n)]=0$. A function $\varphi:(0,\infty)\times\mathbb{N}\to (0,\infty)$ witnessing this property quantitatively in the sense that
\[
\forall \varepsilon>0\ \forall N\in\mathbb{N}\ \exists n\in [N;\varphi(\varepsilon,N)]\left( \EE[F(x_n)]<\varepsilon\right)
\]
is called a $\liminf$-bound in mean for $(x_n)$ w.r.t.\ $F$.
\end{definition}

Such a $\liminf$-bound in mean can then be combined with a modulus of regularity to give a general construction for a rate of convergence, both in mean and almost surely, for the respective stochastic process. This construction, which will occupy us for the most part of the rest of this section, further depends on some minor data, quantitatively witnessing the characteristic properties of the associated correction terms in the quasi-Fej\'er monotonicity condition. 

Concretely, recall that (strong) stochastic $\phi$-quasi-Fej\'er monotonicity features two stochastic correction terms broadening the supermartingale condition, a multiplicative term $(1+\zeta_n)$ and an additive term $\xi_n$, with $(\zeta_n),(\xi_n)\in\ell^1_+(\mathsf{F}_n)$. In the following, we will slightly upgrade and simultaneously quantitatively resolve these integrability properties as follows. For $(\xi_n)\in\ell^1_+(\mathsf{F}_n)$, we will further assume that $\sum_{n=0}^\infty \EE[\xi_n]<\infty$, quantitatively witnessed by a corresponding rate of convergence $\chi:(0,\infty)\to\mathbb{N}$, i.e.
\[
\forall \varepsilon>0\left( \sum_{n=\chi(\varepsilon)}^\infty \EE[\xi_n]<\varepsilon\right).
\]
For $(\zeta_n)\in\ell^1_+(\mathsf{F}_n)$, which can be equivalently expressed by $\prod_{n=0}^\infty (1+\zeta_n)<\infty$ a.s., we will further assume the existence of a uniform almost-sure bound $K>0$, i.e.
\[
\prod_{n=0}^\infty (1+\zeta_n)<K\text{ a.s.}
\]
While both requirements are actually qualitative strengthenings of the properties $(\zeta_n),(\xi_n)\in\ell^1_+(\mathsf{F}_n)$, they allows for a much smoother development of the associated quantitative results, and at the same time are practically speaking only very mild restrictions, as many algorithms actually confine to a quasi-Fej\'er monotonicity property where $(\zeta_n)$ is a sequence of reals, and in many cases even is constantly $0$, and where $(\xi_n)$ is summable in mean. With these minor quantitative moduli in place, we now are in the position to give our main abstract quantitative convergence theorem, formulated for consistent distances:

\begin{theorem}\label{thm:main}
Let $F:X\to [0,\infty]$ be measurable, and such that $\mathrm{zer}F$ is a closed non-empty set. Further, let $\phi:X\times X\to [0,\infty)$ be a Carath\'eodory distance and assume that $\phi$ is consistent with a modulus $\theta:[0,\infty)\to [0,\infty)$ which is nondecreasing and convex with $\theta(0)=0$ and $\theta(\varepsilon)>0$ for $\varepsilon>0$. Let $(\mathsf{F}_n)$ be a filtration and let $(x_n)$ be an $X$-valued stochastic process adapted to $(\mathsf{F}_n)$ such that:
\begin{enumerate}
\item $(x_n)$ is strongly stochastically $\phi$-quasi-Fej\'er monotone w.r.t.\ $\mathrm{zer}F$ and $(\mathsf{F}_n)$ and error sequences $(\zeta_n),(\xi_n)\in\ell^1_+(\mathsf{F}_n)$, where $K>0$ is a uniform almost-sure bound for $\prod_{n=0}^\infty (1+\zeta_n)<\infty$ and $\chi:(0,\infty)\to\mathbb{N}$ is a rate of convergence for $\sum_{n=0}^\infty \EE[\xi_n]<\infty$.
\item $(x_n)$ has the $\liminf$-property in mean w.r.t.\ $F$ with a $\liminf$-bound $\varphi:(0,\infty)\times\mathbb{N}\to (0,\infty)$.
\end{enumerate}
Lastly, let $\tau:(0,\infty)\to (0,\infty)$ be a modulus of $\phi$-regularity for $F$ in mean w.r.t.\ $D$, where $D$ is a collection of $X$-valued random variables with $(x_n)\subseteq D$. Then there is a $\mathrm{zer}F$-valued random variable $x$ such that $d(x_n,x)\to 0$ in mean and a.s., with rates
\[
\forall\varepsilon>0\ \forall n\geq \rho(\theta(\varepsilon/2))\left(\EE[d(x_n,x)]<\varepsilon\right)
\]
as well as
\[
\forall \lambda,\varepsilon>0\left(\PP(\exists n\geq \rho(\lambda\theta(\varepsilon/2)) (d(x_n,x)\geq\varepsilon))< \lambda\right)
\]
where $\rho(\varepsilon):=\varphi\left(\tau\left(\varepsilon/3K\right),\chi\left(\varepsilon/3K\right)\right)$.
\end{theorem}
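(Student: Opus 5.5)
The idea is to use the $\liminf$-bound and the modulus of regularity to find a time $N$ at which $x_N$ is close in mean to $\mathrm{zer}F$. We then freeze an $\mathsf{F}_N$-measurable near-projection $z$ onto $\mathrm{zer}F$ and use strong quasi-Fej\'er monotonicity to control $\phi(z,x_n)$ for all $n\geq N$, in mean and uniformly a.s. Finally a Cauchy/limit argument produces $x$.

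Fix $\varepsilon>0$ and set $\delta:=\varepsilon/3K$. By the $\liminf$-bound there is $n_0\in[\chi(\delta);\rho(\varepsilon)]$ with $\EE[F(x_{n_0})]<\tau(\delta)$. Since $x_{n_0}\in D$, regularity in mean gives $\EE[\mathrm{dist}^\phi_{\mathrm{zer}F}(x_{n_0})]<\delta$. By Corollary \ref{cor:approximationsMean} with $\mathcal{F}=\mathsf{F}_{n_0}$, we pick an $\mathsf{F}_{n_0}$-measurable $\mathrm{zer}F$-valued $z$ with $\EE[\phi(z,x_{n_0})]<\delta$. This is exactly where strong (rather than plain) quasi-Fej\'er monotonicity is needed.

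For $n\geq n_0$, $z$ is $\mathsf{F}_n$-measurable, so the strong Fej\'er inequality applies to $z$ at every later step; integrability propagates inductively, using that $\zeta_n$ is bounded by $K$ a.s. The process
\[
U_n:=\prod_{k<n}(1+\zeta_k)^{-1}\Big(\phi(z,x_n)+\sum_{k\geq n}\xi_k\Big)
\]
(or a variant with conditional expectations of the tail) is a nonnegative supermartingale for $n\geq n_0$. Hence $\EE[\phi(z,x_n)]\leq K(\EE[\phi(z,x_{n_0})]+\sum_{k\geq n_0}\EE[\xi_k])<2K\delta$, and by Ville's maximal inequality $\PP(\sup_{n\geq n_0}\phi(z,x_n)\geq \eta)\leq 2K\delta/\eta$, or similarly with $3K\delta=\varepsilon$ after adjusting constants. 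This yields $\EE[\phi(z,x_n)]<\varepsilon$ for all $n\geq\rho(\varepsilon)$, and $\PP(\exists n\geq \rho(\lambda\varepsilon)\,\phi(z,x_n)\geq\varepsilon)<\lambda$ after replacing $\varepsilon$ by $\lambda\varepsilon$. I expect the careful handling of the random $\zeta_n$ in the maximal inequality to be the main technical nuisance. The factor $3K$ should come out as $K\cdot(\delta+\delta)$ plus slack, so it should work.

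Next, we convert to the metric and build $x$. Applying the above with $\theta(\varepsilon/2)$ in place of $\varepsilon$ gives $\EE[\theta(d(z,x_n))]\leq\EE[\phi(z,x_n)]<\theta(\varepsilon/2)$ by Remark \ref{rem:consistencyGrowth}. Jensen (convexity of $\theta$) and monotonicity then give $\EE[d(z,x_n)]<\varepsilon/2$. Similarly, the a.s. statement gives $d(z,x_n)<\varepsilon/2$ for all $n\geq\rho(\lambda\theta(\varepsilon/2))$ off an event of probability $<\lambda$. Applying this with $\lambda=2^{-k}$ and $\varepsilon=2^{-k}$ shows $(x_n)$ is a.s. Cauchy, hence converges a.s. to some $x$ by completeness. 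Each corresponding $z^{(k)}$ satisfies $d(z^{(k)},x)\leq 2^{-k}$ eventually, so $x\in\mathrm{zer}F$ a.s. by closedness, and we redefine $x$ on a null set.

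Passing to the limit, $d(x_n,x)\leq d(x_n,z)+\lim_m d(z,x_m)$. Fatou gives $\EE[d(x_n,x)]\leq\varepsilon/2+\varepsilon/2$, and strictness is obtained by the slack in the strict inequalities (or by noting the bound holds with $\varepsilon/2$ replaced by something smaller). On the good event, $d(x_n,x)\leq\varepsilon/2+\varepsilon/2$ for all $n\geq\rho(\lambda\theta(\varepsilon/2))$; using the strict version $<\varepsilon/2$ at each $n$ gives $d(x_n,x)<\varepsilon$. This yields both stated rates.
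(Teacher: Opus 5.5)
Your proposal is correct and follows essentially the same route as the paper: the $\liminf$-bound plus regularity give a good index $n_0$, Corollary \ref{cor:approximationsMean} gives an $\mathsf{F}_{n_0}$-measurable near-projection $z$, and then come the normalized supermartingale with Ville's inequality and Jensen for the convex $\theta$. Only the variant of $U_n$ with $\EE[\sum_{k\geq n}\xi_k\mid\mathsf{F}_n]$ is adapted, and that is the one the paper uses. Your final passage to $x$, directly via $d(x_n,x)\leq d(x_n,z)+\lim_m d(z,x_m)$ and Fatou, is a slightly cleaner replacement for the paper's Cauchy-in-mean subsequence argument and handles the strict inequalities correctly.
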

\begin{proof}
Fix $\varepsilon>0$ and let $\delta:=\theta(\varepsilon/2) K^{-1}$ and $N:=\chi(\delta/3)$. By the liminf property, we have
\[
\EE[F(x_n)]<\tau\left(\delta/3\right)
\]
for some $n\in [N;\varphi(\tau(\delta/3),N)]=[N,\rho(\theta(\varepsilon/2))]$. Using that $(x_n)\subseteq D$ and that $\tau$ is a modulus of regularity, we get
\[
\EE[\mathrm{dist}^\phi_{\mathrm{zer}F}(x_{n})]< \delta/3.
\]
Using Corollary \ref{cor:approximationsMean}, let $z$ be an $X$-valued $\mathsf{F}_n$-measurable random variable such that $z\in \mathrm{zer}F$ a.s.\ and
\[
\EE[\phi(z,x_{n})]\leq \EE[\mathrm{dist}^\phi_{\mathrm{zer}F}(x_{n})]+\delta/3<2\delta/3.
\]
Now consider the stochastic process $(U_k)_{k\geq n}$ defined by
\[
U_k:=\frac{\phi(z,x_k)}{y_{k-1}}+\EE\left[\sum_{i=k}^\infty \frac{\xi_i}{y_i}\mid\mathsf{F}_k\right] \text{ where } y_j:=\prod_{i=0}^j (1+\zeta_i).
\]
Since $x_k$ and $z$ are $\mathsf{F}_k$-measurable (the latter since $z$ is already $\mathsf{F}_n$-measurable), by Lemma \ref{lem:measurable}, (1) we have that $\phi(z,x_k)$ and thus $U_k$ is $\mathsf{F}_k$-measurable for all $k\geq n$. Using that $(x_n)$ is strongly stochastically $\phi$-quasi-Fej\'er monotone w.r.t.\ $\mathrm{zer}F$ and $(\mathsf{F}_n)$ and that $z$ is $\mathsf{F}_k$-measurable for all $k\geq n$ and $\EE[\phi(z,x_{n})]<\infty$, it follows by induction that $\EE[\phi(z,x_{k})]<\infty$ for all $k\geq n$. Therefore, the strong stochastic $\phi$-quasi-Fej\'er monotonicity now implies that $(U_k)_{k\geq n}$ is a supermartingale w.r.t.\ $(\mathsf{F}_k)_{k\geq n}$. Concretely, using basic properties of conditional expectations:
\begin{align*}
\EE[U_{k+1}\mid\mathsf{F}_k]&=\EE\left[\frac{\phi(z,x_{k+1})}{y_{k}}\mid \mathsf{F}_k\right]+\EE\left[\EE\left[\sum_{i={k+1}}^\infty \frac{\xi_i}{y_i}\mid\mathsf{F}_{k+1}\right]\mid\mathsf{F}_k\right]\\
&=\frac{\EE\left[\phi(z,x_{k+1})\mid \mathsf{F}_k\right]}{y_k}+\EE\left[\sum_{i={k+1}}^\infty \frac{\xi_i}{y_i}\mid\mathsf{F}_k\right]\\
&\leq \frac{(1+\zeta_k)\phi(z,x_k)}{y_{k}}+\frac{\xi_k}{y_{k}}+\EE\left[\sum_{i={k+1}}^\infty \frac{\xi_i}{y_i}\mid\mathsf{F}_{k}\right]\\
&=\frac{\phi(z,x_{k})}{y_{k-1}}+\EE\left[\sum_{i={k}}^\infty \frac{\xi_i}{y_i}\mid\mathsf{F}_{k}\right]=U_k.
\end{align*}
Further, for any $k\geq n$, we have
\[
U_k= \frac{\phi(z,x_k)}{y_{k-1}}+\EE\left[\sum_{i=k}^\infty \frac{\xi_i}{y_i}\mid\mathsf{F}_k\right]\leq \phi(z,x_k)+\EE\left[\sum_{i=k}^\infty \xi_i\mid\mathsf{F}_k\right]
\]
using $y_j\geq 1$ for all $j\in\NN$. Therefore, taking expectations, we have for any $k\geq n$ that
\[
\EE[U_{k}]\leq \EE[U_n]\leq  \EE[\phi(z,x_{n})]+\sum_{i=n}^\infty\EE[\xi_i]<2\delta/3+\delta/3=\delta,
\]
where we use that $n\geq N=\chi(\delta/3)$. As
\[
\frac{\phi(z,x_k)}{K}\leq \frac{\phi(z,x_k)}{y_{k-1}}\leq U_k,
\]
it follows that $\EE[\phi(z,x_k)K^{-1}]<\delta=\theta(\varepsilon/2) K^{-1}$ and we thus also have $\EE[\phi(z,x_k)]<\theta(\varepsilon/2)$ for all $k\geq n$. As in Remark \ref{rem:consistencyGrowth}, we have $\phi(x,y)\geq \theta(d(x,y))$ for all $x,y\in X$, so that since $\theta$ is convex, we get
\[
\theta(\EE[d(z,x_k)])\leq \EE[\theta(d(z,x_k))] \leq \EE[\phi(z,x_k)]<\theta(\varepsilon/2)
\]
by Jensen's inequality. As $\theta$ is nondecreasing, we get $\EE[d(z,x_k)]<\varepsilon/2$ for all $k\geq n$, and so we have that $(x_n)$ is Cauchy in mean, with rate $\rho(\theta(\varepsilon/2))$. Suppose now $\delta:=\lambda\theta(\varepsilon/2) K^{-1}$ instead. Then as above we get some $n\leq \rho(\lambda\theta(\varepsilon/2))$ with $\EE[U_{k}]<\delta$ for all $k\geq n$, so that by Ville's inequality we have
\begin{align*}
\PP(\exists k\geq \rho(\lambda\theta(\varepsilon/2))(\phi(z,x_k)\geq a))&= \PP(\exists k\geq \rho(\lambda\theta(\varepsilon/2))(\phi(z,x_k)/K\geq a/K))\\
&\leq \PP(\exists k\geq \rho(\lambda\theta(\varepsilon/2))(U_k\geq a/K))\\
&\leq \frac{\EE[U_{\rho(\lambda\theta(\varepsilon/2))}]}{a K^{-1}}<\frac{\delta}{a K^{-1}}
\end{align*}
for any $a>0$. Setting $a:=\theta(\varepsilon/2)$, we observe that
\begin{align*}
\PP(\exists k,l\geq\rho(\lambda\theta(\varepsilon/2))(d(x_k,x_l)\geq \varepsilon)&\leq \PP(\exists k,l\geq \rho(\lambda\theta(\varepsilon/2))(d(z,x_k)+d(z,x_l)\geq \varepsilon)) \\
&\leq \PP(\exists k\geq \rho(\lambda\theta(\varepsilon/2))(d(z,x_k)\geq \varepsilon/2))\\
&\leq \PP(\exists k\geq \rho(\lambda\theta(\varepsilon/2))(\phi(z,x_k)\geq \theta(\varepsilon/2)))\\
&<\frac{\delta}{\theta(\varepsilon/2)K^{-1}}=\lambda.
\end{align*}
This shows that $(x_n)$ is Cauchy a.s.\ with rate $\rho(\lambda\theta(\varepsilon/2))$, and by inspecting the above calculations we also see that $\mathrm{dist}_{\mathrm{zer}F}(x_n)\to 0$ a.s., with the same rate. In particular, we thus have that $(x_n)$ almost surely converges to some measurable $x$, and it follows immediately that this is with the same rate. Lastly, we also have
\[
\mathrm{dist}_{\mathrm{zer}F}(x)\leq \mathrm{dist}_{\mathrm{zer}F}(x_n)+d(x_n,x)
\]
so that $\mathrm{dist}_{\mathrm{zer}F}(x)=0$ a.s. As $\mathrm{zer}F$ is closed, we have $x\in \mathrm{zer}F$ a.s. It follows rather immediately that $(x_n)$ also converges to this limit in mean with rate $\rho(\theta(\varepsilon/2))$. Concretely, note that since $(x_n)$ is Cauchy in mean, there is a strictly increasing sequence of indices $(n_i)$ such that $\EE[d(x_n,x_m)]\leq 2^{-i}$ for all $n,m\geq n_i$. In particular, it holds that $\sum_{i=k}^\infty \EE[d(x_{n_i},x_{n_{i+1}})]\leq 2^{-k+1}$. Further, almost surely we have
\[
d(x_{n_k},x)\leq \sum_{i=k}^j d(x_{n_i},x_{n_{i+1}})+d(x_{n_{j+1}},x)
\]
so that, since $(x_n)$ almost surely converges $x$, we have that  $d(x_{n_k},x)\leq \sum_{i=k}^\infty d(x_{n_i},x_{n_{i+1}})$ holds almost surely. Applying expectations and using the monotone convergence theorem yields $\EE[d(x_{n_k},x)]\to 0$ for $k\to\infty$. Lastly, we have $\EE[d(x_n,x)]\leq \EE[d(x_n,x_{n_k})]+\EE[d(x_{n_k},x)]$ and so, using again that $(x_n)$ is Cauchy in mean, we obtain that $(x_n)$ converges to $x$ in mean with the rate above.
\end{proof}

\begin{remark}
While formulated for Caratheodory distances $\phi$ above, it follows directly from the proof that Theorem \ref{thm:main} holds whenever
\begin{enumerate}
\item $\phi$ is measurable w.r.t.\ $\mathsf{B}(X)\otimes \mathsf{B}(X)$,
\item $\mathrm{dist}^\phi_{\mathrm{zer}F}$ is measurable for any non-empty closed $S\subseteq X$,
\item for any $n\in\mathbb{N}$, $\mathrm{dist}^\phi_{\mathrm{zer}F}$ has $\mathsf{F}_n$-measurable approximations in mean w.r.t.\ $D$, that is for all $\mathsf{F}_n$-measurable $x\in D$ and any $\varepsilon>0$, there exists some $X$-valued $\mathsf{F}_n$-measurable random variable $z$ such that $z\in \mathrm{zer}F$ a.s.\ and $\EE[\phi(z,x)]\leq \EE[\mathrm{dist}^\phi_{\mathrm{zer}F}
(x)]+\varepsilon$.
\end{enumerate}
\end{remark}

\begin{remark}
It follows immediately by inspection of the proof that the convexity assumption for the consistency modulus $\theta$ featuring in Theorem \ref{thm:main} is not necessary to establish the almost-sure convergence together with the corresponding rate. In particular, towards establishing convergence in mean, this assumption may hence be bypassed by, instead, assuming the uniform integrability of the sequence. This can in particular be made quantitative by assuming corresponding so-called moduli of uniform integrability for the sequence as studied in \cite{PischkePowell2024} (see also \cite{NeriPischkePowell2026}), but we do not discuss this here any further.
\end{remark}

If we only care for the distance to the solution set, the assumption of strong stochastic quasi-Fej\'er monotonicity can be weakened to the ``normal'' variant:

\begin{theorem}\label{thm:mainWeak}
Let $F:X\to [0,\infty]$ be measurable, and such that $\mathrm{zer}F$ is a closed non-empty set. Further, let $\phi:X\times X\to [0,\infty)$ be a Carath\'eodory distance. Let $(\mathsf{F}_n)$ be a filtration and let $(x_n)$ be an $X$-valued stochastic process adapted to $(\mathsf{F}_n)$ such that:
\begin{enumerate}
\item $(x_n)$ is stochastically $\phi$-quasi-Fej\'er monotone w.r.t.\ $\mathrm{zer}F$ and $(\mathsf{F}_n)$ and error sequences $(\zeta_n),(\xi_n)\in\ell^1_+(\mathsf{F}_n)$, where $K>0$ is a uniform almost-sure bound for $\prod_{n=0}^\infty (1+\zeta_n)<\infty$ and $\chi:(0,\infty)\to\mathbb{N}$ is a rate of convergence for $\sum_{n=0}^\infty \EE[\xi_n]<\infty$.
\item $(x_n)$ has the $\liminf$-property in mean w.r.t.\ $F$ with a $\liminf$-bound $\varphi:(0,\infty)\times\mathbb{N}\to (0,\infty)$.
\end{enumerate}
Lastly, let $\tau:(0,\infty)\to (0,\infty)$ be a modulus of $\phi$-regularity for $F$ in mean w.r.t.\ $D$, where $D$ is a collection of $X$-valued random variables with $(x_n)\subseteq D$. Then $\mathrm{dist}^\phi_{\mathrm{zer}F}(x_n)\to 0$ in mean and a.s., with rates
\[
\forall\varepsilon>0\ \forall n\geq \rho(\varepsilon)\left(\EE[\mathrm{dist}^\phi_{\mathrm{zer}F}(x_n)]<\varepsilon\right)
\]
as well as
\[
\forall \lambda,\varepsilon>0\left(\PP(\exists n\geq \rho(\lambda\varepsilon) (\mathrm{dist}^\phi_{\mathrm{zer}F}(x_n)\geq\varepsilon))< \lambda\right)
\]
where $\rho$ is as in Theorem \ref{thm:main}. If $\phi$ is uniformly consistent with a modulus $\theta:[0,\infty)\to [0,\infty)$ which is nondecreasing and convex with $\theta(0)=0$ and $\theta(\varepsilon)>0$ for $\varepsilon>0$, then we further have $\mathrm{dist}_{\mathrm{zer}F}(x_n)\to 0$ in mean and a.s., with rates $\rho(\theta(\varepsilon))$ and $\rho(\lambda\theta(\varepsilon))$ respectively.
\end{theorem}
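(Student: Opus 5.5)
The plan is to work directly with the process $V_k := \mathrm{dist}^\phi_{\mathrm{zer}F}(x_k)$, which avoids measurable selections; this is exactly why ordinary (non-strong) quasi-Fej\'er monotonicity suffices here. First, $V_k$ is $\mathsf{F}_k$-measurable by Lemma \ref{lem:measurable}, (2), applied to the closed, hence Borel, set $\mathrm{zer}F$. Next, for each fixed deterministic $z\in\mathrm{zer}F$ we have
\[
\EE[V_{k+1}\mid\mathsf{F}_k]\leq \EE[\phi(z,x_{k+1})\mid\mathsf{F}_k]\leq (1+\zeta_k)\phi(z,x_k)+\xi_k \text{ a.s.}
\]
I then take the infimum over $z$ in a countable dense subset $Q\subseteq\mathrm{zer}F$, which exists by separability. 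The null sets are countable in number, and by continuity of $\phi$ in its left argument the infimum over $Q$ equals $V_k$. This yields the almost-supermartingale inequality
\[
\EE[V_{k+1}\mid\mathsf{F}_k]\leq (1+\zeta_k)V_k+\xi_k \text{ a.s.}
\]
Integrability of $V_k$ from some index on will follow below by induction from $\EE[V_n]<\infty$, using $(1+\zeta_k)\leq K$.

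Now fix $\varepsilon>0$, set $\delta:=\varepsilon/K$ and $N:=\chi(\delta/3)$. By the $\liminf$-bound there is $n\in[N;\varphi(\tau(\delta/3),N)]=[N;\rho(\varepsilon)]$ with $\EE[F(x_n)]<\tau(\delta/3)$. Since $(x_n)\subseteq D$, regularity gives $\EE[V_n]<\delta/3$. As in the proof of Theorem \ref{thm:main}, I define, for $k\geq n$,
\[
U_k:=\frac{V_k}{y_{k-1}}+\EE\Big[\sum_{i=k}^\infty \frac{\xi_i}{y_i}\;\Big|\;\mathsf{F}_k\Big], \qquad y_j:=\prod_{i=0}^j(1+\zeta_i).
\]
The same computation shows $(U_k)_{k\geq n}$ is a nonnegative supermartingale, and
\[
\EE[U_k]\leq\EE[U_n]\leq \EE[V_n]+\sum_{i\geq N}\EE[\xi_i]<2\delta/3<\delta.
\]
Using $V_k/K\leq U_k$, we get $\EE[V_k]<\varepsilon$ for all $k\geq n$, and in particular for all $k\geq\rho(\varepsilon)$, since $n\leq\rho(\varepsilon)$. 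This is the rate in mean.

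For the almost-sure rate, I take $\delta:=\lambda\varepsilon/K$. This produces some $n\leq\rho(\lambda\varepsilon)$ with $\EE[U_n]<\delta$, and Ville's maximal inequality for the supermartingale $(U_k)_{k\geq n}$ gives
\[
\PP\big(\exists k\geq n\,(V_k\geq\varepsilon)\big)\leq \PP\big(\exists k\geq n\,(U_k\geq \varepsilon/K)\big)\leq \frac{K\,\EE[U_n]}{\varepsilon}<\lambda .
\]
Since $n\leq\rho(\lambda\varepsilon)$, the event $\exists k\geq\rho(\lambda\varepsilon)\,(V_k\geq\varepsilon)$ is contained in this one, so the same bound holds for it.

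For the final claim, Remark \ref{rem:consistencyGrowth} together with the argument of Lemma \ref{lem:consistentDistance} gives $\theta(\mathrm{dist}_{\mathrm{zer}F}(x))\leq \mathrm{dist}^\phi_{\mathrm{zer}F}(x)$: for each $s$ we have $\theta(\mathrm{dist}_{\mathrm{zer}F}(x))\leq\theta(d(s,x))\leq\phi(s,x)$ by monotonicity, and we take the infimum over $s$. Jensen's inequality with convex $\theta$ then turns $\EE[V_k]<\theta(\varepsilon)$ into $\EE[\mathrm{dist}_{\mathrm{zer}F}(x_k)]<\varepsilon$, giving rate $\rho(\theta(\varepsilon))$. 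In the probabilistic case, $\mathrm{dist}_{\mathrm{zer}F}(x_k)\geq\varepsilon$ implies $V_k\geq\theta(\varepsilon)$, giving rate $\rho(\lambda\theta(\varepsilon))$.

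The main delicate step is the first one: passing from the inequality for each fixed $z$ to one for the infimum $V_k$ while keeping null sets countable. It hinges on separability of $\mathrm{zer}F$ and left-continuity of $\phi$. Integrability of the $U_k$ also needs a brief check.
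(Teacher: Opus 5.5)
Your proposal is correct and follows the paper's route. You derive the almost-supermartingale inequality for $\mathrm{dist}^\phi_{\mathrm{zer}F}(x_k)$ by taking an infimum over $z$, rerun the supermartingale and Ville argument of Theorem \ref{thm:main} without the measurable-selection step, and convert back to the metric via $\theta$ and Jensen. Your countable dense subset of $\mathrm{zer}F$, combined with left-continuity of $\phi$, supplies the null-set bookkeeping that the paper's ``taking the infimum over $z$'' leaves implicit.
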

\begin{proof}
Given any $z\in\mathrm{zer}F$, we have $\mathrm{dist}^\phi_{\mathrm{zer}F}(x_{n+1})\leq \phi(z,x_{n+1})$ and hence we get
\[
\EE[\mathrm{dist}^\phi_{\mathrm{zer}F}(x_{n+1})\mid\mathsf{F}_n]\leq \EE[\phi(z,x_{n+1})\mid\mathsf{F}_n]\leq (1+\zeta_n)\phi(z,x_n)+\xi_n
\]
for any such $z\in\mathrm{zer}F$. Taking the infimum over $z$, we get
\[
\EE[\mathrm{dist}^\phi_{\mathrm{zer}F}(x_{n+1})\mid\mathsf{F}_n]\leq (1+\zeta_n)\mathrm{dist}^\phi_{\mathrm{zer}F}(x_{n})+\xi_n.
\]
The remainder of the proof now follows (a simplification of) the arguments for Theorem \ref{thm:main} (now using Lemma \ref{lem:measurable}, (2) to establish that the main supermartingale is adapted to $(\mathsf{F}_n)$) and is omitted.
\end{proof}

It is important to stress that our abstract results can be refined to fit more specific conditions on $(x_n)$ and $F$, and in that way be used to guarantee stronger convergence guarantees. For example, if the associated regularity modulus is linear, and the error sequences are decaying suitably fast, then we can also obtain linear rates of convergence, even in the form of non-asymptotic guarantees. This can be done by utilizing an almost identical strategy as that applied to the case of unique zeros in \cite{NeriPischkePowell2025}.\footnote{See Theorem 5.7 therein. In fact, the present result arises as a direct application of Theorem 3.6 given in \cite{NeriPischkePowell2025}. However, we present the (rather short) argument tailored to the present situation for completeness.} In that case, we will assume that the process $(x_n)$ is stochastically quasi-Fej\'er monotone in a \emph{strict sense}, that is it satisfies the stricter inequality
\[
\EE[\phi(z,x_{n+1})\mid\mathsf{F}_n]\leq (1+\zeta_n)\phi(z,x_n)-\eta_n F(x_n)+\xi_n\text{ a.s.}
\]
for all $n\in\mathbb{N}$ and all $z\in \mathrm{zer}F$, matching more closely the notion of stochastic quasi-Fej\'er monotonicity studied e.g.\ over Hilbert spaces in \cite{CombettesPesquet2015}. Beyond this slightly extended property, we further rely on a folklore quantitative result for real recursive inequalities (see e.g.\ \cite{NemirovskiJuditskyLanShapiro2009} for similar such results):

\begin{lemma}[see e.g.\ Lemma 3.5 in \cite{NeriPischkePowell2025}]\label{lem:Nemirovski}
Suppose that $(x_n)$ is a sequence of nonnegative reals such that for $c>1$, $d\geq 0$ and $r\in \NN\backslash\{0\}$, we have
\[
x_{n+1}\leq \left(1-\frac{c}{n+r}\right)x_n+\frac{d}{(n+r)^2}
\]
for all $n\in\NN$. Then for all $n\in\NN$:
\[
x_n\leq \frac{u}{n+r} \ \text{ for }\ u\geq \max\left\{\frac{d}{c-1},rx_0\right\}.
\]
\end{lemma}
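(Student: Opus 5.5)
The argument is a plain induction on $n$, with target $x_n\leq u/(n+r)$ for a fixed $u\geq\max\{d/(c-1),rx_0\}$.

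\emph{Base case.} For $n=0$ the claim reads $x_0\leq u/r$. This is exactly the hypothesis $u\geq rx_0$.

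\emph{Inductive step.} Assume $x_n\leq u/(n+r)$ and write $m:=n+r\geq 1$. Two cases arise, depending on the sign of $1-c/m$.

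If $1-c/m\leq 0$, the first term of the recursion is nonpositive, because $x_n\geq 0$. Hence $x_{n+1}\leq d/m^2$. It then suffices to check $d/m^2\leq u/(m+1)$. This follows from $d\leq (c-1)u$ together with $c\geq m$:
\[
\frac{d}{m^2}\leq \frac{(c-1)u}{m^2}\leq \frac{(m-1)u}{m^2}\leq \frac{u}{m+1},
\]
where the last inequality uses $(m-1)(m+1)\leq m^2$. (When $c\geq m$ one has $c-1\geq m-1$, so the middle step should be handled more carefully; the direct route is instead $d/m^2\leq u(c-1)/m^2$ combined with the nonpositive first term, which needs extra care. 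Since $r$ is positive this regime occurs only for finitely many $n$, and I expect it to be the point needing the most attention.)

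If $1-c/m>0$, the induction hypothesis gives
\[
x_{n+1}\leq \Bigl(1-\frac{c}{m}\Bigr)\frac{u}{m}+\frac{d}{m^2}\leq \frac{u}{m}-\frac{cu}{m^2}+\frac{(c-1)u}{m^2}=\frac{u}{m}-\frac{u}{m^2}=\frac{u(m-1)}{m^2}.
\]
Since $(m-1)(m+1)\leq m^2$, this is at most $u/(m+1)=u/(n+1+r)$, which closes the induction in this case.

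\emph{Main obstacle.} The delicate point is the case where $1-c/m$ is negative. Monotonicity of the map $x_n\mapsto(1-c/m)x_n$ fails there, so the induction hypothesis cannot be inserted directly. In the intended reading of the lemma, where the coefficients $1-c/(n+r)$ are nonnegative, this case is vacuous.

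If it does occur, I would try first to bound $x_{n+1}\leq d/m^2$ and compare with $u/(m+1)$ using $u\geq d/(c-1)$. This requires $(c-1)(m+1)\geq m^2$, which is not automatic. The fallback is to read the statement, as in the cited source (Lemma 3.5 of \cite{NeriPischkePowell2025}), with the implicit assumption that $1-c/(n+r)\geq 0$, i.e. $r\geq c$. Under that assumption only the second case occurs and the proof above is complete.
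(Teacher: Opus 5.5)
Your base case and your second case ($1-c/m>0$) are correct. Together they are the standard induction behind the cited result; the paper itself gives no proof, only the reference to \cite{NeriPischkePowell2025}.

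The case you could not close, however, cannot be closed at all: as printed, the lemma is false when $c>r$. Take $c=2$, $d=1$, $r=1$ and $x_0=0$, $x_1=1$, $x_n=0$ for $n\geq 2$. The recursion holds at $n=0$ because $x_1\le(1-2)\cdot 0+1=1$. It holds at $n=1$ because $x_2=0\le 0\cdot x_1+\tfrac14$, and it holds trivially for $n\geq 2$. The value $u=\max\{d/(c-1),rx_0\}=1$ is admissible, yet $x_1=1>\tfrac12=u/(1+r)$.

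So the condition $r\geq c$ (equivalently $1-c/(n+r)\geq 0$ for all $n$) is not an ``intended reading'' to fall back on. It is a hypothesis that must be added to the statement, and once it is, your argument is a complete proof. The proof of Theorem~\ref{thm:fast}, which applies this lemma, needs the same condition.

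Some clean-up is also needed:
\begin{enumerate}
\item Delete the displayed chain in your first case. Its middle step $(c-1)u/m^2\le(m-1)u/m^2$ requires $c\le m$, which is the opposite of that case's assumption.
\item Move the boundary $c=m$ into the second case, since inserting the induction hypothesis only needs $1-c/m\geq 0$. Under $r\geq c$ this boundary does occur, at $n=0$ when $r=c$, and your computation handles it verbatim.
\item The sufficient condition you were looking for in the bad regime is $(c-1)(m+1)\le m^2$, not $\geq$. It fails for $c=2$, $m=1$, which is exactly where the counterexample sits. There the next coefficient $1-c/2$ vanishes, so nonnegativity of later terms cannot rescue the bound.
\end{enumerate}
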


Our result on fast non-asymptotic guarantees is then readily derived:

\begin{theorem}\label{thm:fast}
Let $F:X\to [0,\infty]$ be measurable, and such that $\mathrm{zer}F$ is a closed non-empty set. Further, let $\phi:X\times X\to [0,\infty)$ be a Carath\'eodory distance. Let $(\mathsf{F}_n)$ be a filtration and let $(x_n)$ be an $X$-valued stochastic process adapted to $(\mathsf{F}_n)$ which is strictly stochastically $\phi$-quasi-Fej\'er monotone w.r.t.\ $\mathrm{zer}F$ and $(\mathsf{F}_n)$, i.e.
\[
\EE[\phi(z,x_{n+1})\mid\mathsf{F}_n]\leq (1+\zeta_n)\phi(z,x_n)-\eta_n F(x_n)+\xi_n\text{ a.s.}
\]
for all $n\in\mathbb{N}$ and all $z\in \mathrm{zer}F$, where $(\zeta_n),(\eta_n)$ are sequences of nonnegative reals and $(\xi_n)$ are nonnegative random variables such that
\[
\EE[\xi_n]\leq d/(n+r)^2 \text{ and } \zeta_n+c/(n+r)\leq t\eta_n
\]
for all $n\in\mathbb{N}$ where $c>1$, $d\geq 0$ and $r\in\mathbb{N}\setminus\{0\}$. Further suppose that $K\geq 1$ and $L>0$ are such that $\prod_{i=0}^\infty (1+\zeta_i)<K$ and $L\geq \EE[\mathrm{dist}^\phi_{\mathrm{zer}F}(x_0)]$. Lastly, let $D$ be a collection of $X$-valued random variables with $(x_n)\subseteq D$ and such that $\EE[F(x)]\geq t\EE[\mathrm{dist}^\phi_{\mathrm{zer}F}(x)]$ for all $x\in D$. Then
\[
\EE[\mathrm{dist}^\phi_{\mathrm{zer}F}(x_n)]\leq\frac{u}{n+r}  \text{ for } u\geq \max\left\{\frac{d}{c-1},rL\right\}
\]
as well as 
\[
\PP\left(\exists m\geq n(\mathrm{dist}^\phi_{\mathrm{zer}F}(x_n)\geq \varepsilon)\right)\leq \frac{1}{\varepsilon}\cdot\frac{K(u+2d)}{n+r}.
\]
\end{theorem}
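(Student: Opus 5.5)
The plan is to pass to the distance function, as in the proof of Theorem \ref{thm:mainWeak}. Write $D_n:=\mathrm{dist}^\phi_{\mathrm{zer}F}(x_n)$; it is $\mathsf{F}_n$-measurable by Lemma \ref{lem:measurable}, (2). Bounding $D_{n+1}\leq\phi(z,x_{n+1})$ and taking the infimum over $z\in\mathrm{zer}F$ in the strict quasi-Fej\'er inequality gives
\[
\EE[D_{n+1}\mid\mathsf{F}_n]\leq (1+\zeta_n)D_n-\eta_nF(x_n)+\xi_n\text{ a.s.}
\]
The $-\eta_nF(x_n)$ term does not depend on $z$, so it survives the infimum. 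Taking expectations, with $\zeta_n,\eta_n$ deterministic, we get $a_{n+1}\leq(1+\zeta_n)a_n-\eta_n\EE[F(x_n)]+\EE[\xi_n]$, where $a_n:=\EE[D_n]$. Here I first check that $a_n<\infty$ by induction from $a_0\leq L$.

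Next I use the linear growth hypothesis $\EE[F(x_n)]\geq t a_n$ (valid since $x_n\in D$) together with $\zeta_n+c/(n+r)\leq t\eta_n$. This gives
\[
a_{n+1}\leq \Bigl(1+\zeta_n-t\eta_n\Bigr)a_n+\frac{d}{(n+r)^2}\leq\Bigl(1-\frac{c}{n+r}\Bigr)a_n+\frac{d}{(n+r)^2}.
\]
I must make sure the coefficient stays nonnegative in the step that uses $a_n\geq 0$. Since $\EE[F(x_n)]\geq ta_n$ is used multiplied by $\eta_n\geq 0$, only the sign of $\eta_n$ matters, so this is fine. Lemma \ref{lem:Nemirovski} with $x_0=a_0\leq L$ then yields $a_n\leq u/(n+r)$ for $u\geq\max\{d/(c-1),rL\}$, which is the bound in mean.

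For the almost-sure bound I build, for fixed $n$, the nonnegative supermartingale
\[
U_k:=\frac{D_k}{y_{k-1}}+\EE\Bigl[\sum_{i\geq k}\frac{\xi_i}{y_i}\mid\mathsf{F}_k\Bigr],\qquad k\geq n,
\]
with $y_j=\prod_{i\leq j}(1+\zeta_i)$, exactly as in the proof of Theorem \ref{thm:main}, dropping the nonpositive $-\eta_kF(x_k)$ term. Since $y_{k-1}\leq K$, we have $D_k\leq KU_k$. Ville's inequality then gives
\[
\PP(\exists m\geq n\,(D_m\geq\varepsilon))\leq \frac{K\,\EE[U_n]}{\varepsilon}, \qquad \EE[U_n]\leq a_n+\sum_{i\geq n}\frac{d}{(i+r)^2}.
\]
It remains to estimate the tail $\sum_{i\geq n}(i+r)^{-2}$. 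It is at most $1/(n+r)+1/(n+r)^2\leq 2/(n+r)$, using $n+r\geq1$. This gives $\EE[U_n]\leq (u+2d)/(n+r)$ and hence the stated bound.

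The only delicate points are integrability of $U_n$, which follows from $\sum\EE[\xi_i]<\infty$ and $a_n<\infty$, and the sign bookkeeping in the recursion. I expect neither to be a real obstacle.
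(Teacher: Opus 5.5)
Your proposal follows the paper's proof essentially step for step (the infimum over $z$ to get the recursion for $\EE[\mathrm{dist}^\phi_{\mathrm{zer}F}(x_n)]$, then Lemma~\ref{lem:Nemirovski}, then the supermartingale $U_n$ with Ville's inequality); the only difference is that you bound $\sum_{i\geq n}(i+r)^{-2}\leq 2/(n+r)$ by an integral comparison rather than by telescoping via $(i+r)^{-2}\leq 2\bigl((i+r)^{-1}-(i+r+1)^{-1}\bigr)$, so relative to the paper your argument is complete. One caveat you share with the paper rather than introduce: your sign check is fine for deriving the recursion, but nonnegativity of $1-c/(n+r)$ is genuinely needed inside Lemma~\ref{lem:Nemirovski}, which as stated can fail when $c>r$ (with $r=1$, $c=3$, $d=1$, the sequence $0,\tfrac12,0,0,\dots$ satisfies its recursion, yet $x_1=\tfrac12>\tfrac14=u/2$ for the admissible $u=\tfrac12$; this is realized by the deterministic instance $X=\mathbb{R}$, $\phi(x,y)=|x-y|$, $F=|\cdot|$, $t=1$, $\zeta_n=0$, $\eta_n=3/(n+1)$, $\xi_n=1/(n+1)^2$, $L\leq\tfrac12$ of the theorem itself), so both arguments tacitly need an extra hypothesis such as $c\leq r$.
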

\begin{proof}
Similarly to Theorem \ref{thm:mainWeak}, we obtain
\[
\EE[\mathrm{dist}^\phi_{\mathrm{zer}F}(x_{n+1})\mid\mathsf{F}_n]\leq (1+\zeta_n)\mathrm{dist}^\phi_{\mathrm{zer}F}(x_{n})-\eta_n F(x_n)+\xi_n.
\]
Integrating the inequality, we get
\begin{align*}
\EE[\mathrm{dist}^\phi_{\mathrm{zer}F}(x_{n+1})]&\leq (1+\zeta_n)\EE[\mathrm{dist}^\phi_{\mathrm{zer}F}(x_{n})]-\eta_n\EE[F(x_n)]+\EE[\xi_n]\\
&\leq (1+\zeta_n-t\eta_n)\EE[\mathrm{dist}^\phi_{\mathrm{zer}F}(x_{n})]+\EE[\xi_n]\\
&\leq \left(1-\frac{c}{n+r}\right)\EE[\mathrm{dist}^\phi_{\mathrm{zer}F}(x_{n})]+\frac{d}{(n+r)^2}.
\end{align*}
Applying Lemma \ref{lem:Nemirovski} yields the rate for $\EE[\mathrm{dist}^\phi_{\mathrm{zer}F}(x_{n})]$. For the second claim, we proceed similar to the proof of Theorem \ref{thm:main}. Concretely, note first that
\[
\EE[\xi_n]=\frac{d}{(n+r)^2}\leq \frac{2d}{(n+r)(n+r+1)}=2d\left(\frac{1}{n+r}-\frac{1}{n+r+1}\right)
\]
so that
\[
\sum_{i=n}^\infty\EE[\xi_i]\leq 2d\sum_{i=n}^\infty\left(\frac{1}{n+r}-\frac{1}{n+r+1}\right)=\frac{2d}{n+r}.
\]
We define
\[
U_{n}:=\frac{\mathrm{dist}^\phi_{\mathrm{zer}F}(x_{n})}{y_{n-1}}+\EE\left[\sum_{i=n}^\infty \frac{\xi_i}{y_i}\mid \mathsf{F}_n\right], \text{ where }y_j:=\prod_{i=0}^j (1+\zeta_i)
\]
and, analogously to Theorem \ref{thm:main} (now using Lemma \ref{lem:measurable}, (2) to establish measurability), we can then derive that $(U_n)$ is a supermartingale. Combining the rate for $\EE[\mathrm{dist}^\phi_{\mathrm{zer}F}(x_{n})]$ with the above bounds yields $\EE[U_n]\leq (u+2d)(n+r)$. Using Ville's inequality, this yields 
\[
\PP\left(\exists m\geq n(\mathrm{dist}^\phi_{\mathrm{zer}F}(x_m)\geq \varepsilon)\right)\leq \PP\left(\exists m\geq n(U_m\geq \varepsilon/K)\right)\leq \frac{K}{\varepsilon}\cdot \EE[U_n]\leq \frac{1}{\varepsilon}\cdot\frac{K(u+2d)}{n+r}.\qedhere
\]
\end{proof}

\section{Applications to stochastic algorithms}
\label{sec:applications}

In this final section we apply our results to concrete stochastic algorithms. In each case, we work in the rather general context of geodesic metric spaces with nonpositive curvature. These spaces were introduced by Alexandrov \cite{Aleksandrov1951} and are often called $\CAT$ spaces after Gromov \cite{Gromov1987}. We refer to \cite{AlexanderKapovitchPetrunin2023,BridsonHaefliger1999} for a comprehensive overview of geodesic and $\CAT$ spaces and further refer to \cite{Bacak2014a} for a shorter treatment focused on aspects of convex analysis and optimization.

We introduce background from this class of spaces only as needed in each application. Beyond this, we only need the following few notions. In a metric space $(X,d)$, geodesics are isometries $\gamma:[0,l]\to X$, said to join $\gamma(0)$ and $\gamma(l)$. The space is called (uniquely) geodesic if every two points are joined by a (unique) geodesic. A geodesic metric space $(X,d)$ is called a $\CAT$ space (also called a space of nonpositive curvature in the sense of Alexandrov) if it satisfies 
\[
d^2(\gamma(tl),x)\leq (1-t)d^2(\gamma(0),x)+td^2(\gamma(l),x)-t(1-t)d^2(\gamma(0),\gamma(l))\tag{CN}\label{CN}
\]
for all $x\in X$, $t\in [0,1]$ and all geodesics $\gamma:[0,l]\to X$, (an extension of) the so-called Bruhat-Tits $\mathrm{CN}$-inequality \cite{BruhatTits1972}. Any $\CAT$ space is uniquely geodesic, and a complete $\CAT$ space is called a Hadamard space. In Hadamard spaces, we generally write $(1-\lambda)x\oplus \lambda y$ for the point $\gamma(\lambda d(x,y))$ on the unique geodesic $\gamma:[0,d(x,y)]\to X$ joining $x$ and $y$.

\subsection{Stochastic proximal point methods}

The first method we study will be the classic stochastic proximal point method. In a deterministic context, where the method originates with the work of Rockafellar \cite{Rockafellar1976}, Martinet \cite{Martinet1970} as well as Br\'ezis and Lions \cite{BrezisLions1978}, the proximal point method was extended to Hadamard spaces by Ba\v{c}\'ak \cite{Bacak2013}, establishing weak convergence (which, by G\"uler's seminal work \cite{Gueler1991}, is the most one can hope for already in Hilbert spaces).

The stochastic proximal point method, widely studied over Euclidean and Hilbert spaces (we refer to \cite{AsiDuchi2019,Bertsekas2011,Bertsekas2012,Bianchi2016,NemirovskiJuditskyLanShapiro2009,RyuBoyd}, among many others, for various such discussions), was lifted to the setting of (separable) Hadamard spaces in the work \cite{Bacak2018}, building on preceding work \cite{Bacak2014b} on a splitting proximal point method with random order for finite sums of convex functions over similar spaces (see also \cite{Pischke2025} for a recent related variant for general perturbed strongly monotone vector fields over Hadamard spaces). As it is in particular over such spaces, without additional differential structure, where proximal point methods gain relevance compared to gradient descent (we refer to \cite{Bacak2018} as well as \cite{Bertsekas2011} for further discussions), we focus on these extensions. 

Let $(\Omega,\mathsf{F},\PP)$ and $(E,\mathsf{E},\mu)$ be probability spaces, with $(E,\mathsf{E},\mu)$ complete, and let $X$ now be a separable Hadamard space. In analogy to \cite{Rockafellar1971} (see also \cite{CastaingValadier1977}), let $f:E\times X\to (-\infty,+\infty]$ be a normal convex integrand, i.e.\ $f(e,\cdot)$ is proper, lower-semicontinuous (lsc) and convex\footnote{Given a Hadamard space $X$, recall that a function $f:X\to (-\infty,+\infty]$ is called lsc if $\liminf_{n\to\infty} f(x_n)\geq f(x)$ whenever $x_n\to x$ in $X$, and convex if $f\circ \gamma$ is convex for any geodesic $\gamma$ in $X$.} for all $e\in E$ and $f$ is $\mathsf{E}\otimes\mathsf{B}(X)$-measurable. Defining $\underline{f}(x):=\int f(e,x)\,d\mu(e)$ and assuming that $\underline{f}$ is proper and that $\mathrm{argmin}\underline{f}\neq\emptyset$, our problem is to
\[
\mbox{find some element of }\mathrm{argmin} \underline{f}.
\]
We capture this problem in our general setup via $F(x):=\underline{f}(x)-\min\underline{f}$ (recall Section \ref{sec:examples}). Note that $\underline{f}(x)$ and hence $F$ are measurable by Fubini's theorem and that $\underline{f}(x)$ is lsc by Fatou's lemma, so that $\mathrm{argmin} \underline{f}=\mathrm{zer}F$ is closed.

To now introduce the method, define the proximal map of $f$ via
\[
\mathrm{prox}_{\lambda}^f(e,x):=\mathrm{argmin}_{y\in X}\left\{ f(e,y)+\frac{1}{2\lambda}d^2(x,y)\right\},
\]
which is well-defined for all $e\in E$, $x\in X$ and $\lambda>0$ (see e.g.\ \cite{Jost1995} or \cite{Mayer1998}). Further, $\mathrm{prox}_{\lambda}^f(e,\cdot)$ is nonexpansive for any $e\in E$ and $\lambda>0$ (see e.g.\ Lemma 4 in \cite{Jost1995}), and also $\mathrm{prox}_{\lambda}^f(\cdot,x)$ is measurable for any $x\in X$ and $\lambda>0$. Hence, $\mathrm{prox}_{\lambda}^f$ is a Carath\'eodory function and so in particular $\mathsf{E}\otimes\mathsf{B}(X)$-measurable.

The stochastic proximal point method is then given by the iteration
\[
x_{n+1}:=\mathrm{prox}^f_{\lambda_n}(\xi_{n+1},x_n),\tag{\textsf{SPPA}}\label{SPPA}
\]
given a starting point $x_0\in X$ and sequences $(\lambda_n)$ of positive reals as well as $(\xi_{n+1})$ of random variables $\Omega\to E$, for which we assume that 
\[
(\xi_{n+1})\text{ are i.i.d.\ with distribution $\mu$ and }\sum_{n\in\mathbb{N}}\lambda_n=\infty, \sum_{n\in\mathbb{N}}\lambda_n^2<\infty.\tag{\textsf{SPPA}-\textsf{A1}}\label{SPPAparameters}
\]
The work \cite{Bacak2018} in particular relies on a certain weak growth condition on the integrand introduced therein, which is a generalization of many of the common growth conditions from the literature, in particular of Lipschitz continuity of the functional. For simplicity however, we here assume the following (slightly stronger) Lipschitz-type assumption (used throughout the literature on this method in linear spaces, see also e.g.\ \cite{OhtaPalfia2015} for spaces with bounded curvature): Assume there exists a positive function $L\in L^2(E,\mu)$ such that
\[
f(e,x)-f(e,y)\leq L(e)d(x,y)\tag{\textsf{SPPA}-\textsf{A2}}\label{SPPAgrowth}
\]
for all $x,y\in X$ and almost all $e\in E$.\footnote{Note that, in similarity to \cite{Bacak2018}, the above assumption misses absolute values which makes it still weaker than full Lipschitz-continuity assumptions known from the usual literature.}

Now, as mentioned above, without any regularity assumptions the deterministic proximal point method in general only converges weakly. For the stochastic proximal point method the situation is even more dire, as without regularity convergence can in general only be guaranteed on locally compact spaces and a.s.\ weak convergence remains, even on infinite dimensional Hilbert spaces, an open problem (see \cite{Bacak2023}).

In both cases, there are no effective convergence guarantees in general. These however can be obtained via additional regularity assumptions. The most common assumption used in the literature is that of strong or at least uniform convexity of the function. Next to a very large number of works in linear spaces, such rates for the deterministic case over Hadamard spaces are also discussed in \cite{Bacak2013} (see also \cite{LeusteanSipos2018}). The stochastic case, in particular over Hadamard spaces, is considerably less populated and the main strong convergence result under a regularity assumption in that vein appears, to our knowledge, in \cite{OhtaPalfia2015}. However, no explicit rates are given in that work.

We here present the following general result on the effective convergence of \eqref{SPPA} under a stochastic regularity assumption:\footnote{See also \cite{KohlenbachLopezAcedoNicolae2019} for a result with similar generality for the deterministic proximal point method in Hilbert spaces.}

\begin{theorem}\label{SPPAstrongConv}
Let $(E,\mathsf{E},\mu)$ and $(\Omega,\mathsf{F},\PP)$ be probability spaces, with $(E,\mathsf{E},\mu)$ complete, and let $X$ be a separable Hadamard space. Let $f:E\times X\to (-\infty,+\infty]$ be a normal convex integrand such that $\underline{f}(x):=\int f(e,x)\,d\mu(e)$ is proper and $\mathrm{argmin}\underline{f}\neq\emptyset$. Write $F(x):=\underline{f}(x)-\min\underline{f}$. Let $(x_n)$ be the iteration given by \eqref{SPPA}, and assume \eqref{SPPAparameters} as well as \eqref{SPPAgrowth}. Lastly, let $\tau:(0,\infty)\to (0,\infty)$ be a modulus of regularity for $F$ in mean w.r.t.\ $D$, i.e.
\[
\forall \varepsilon>0\ \forall x\in D\left(\EE[\underline{f}(x)-\min\underline{f}]<\tau(\varepsilon)\to \EE[\mathrm{dist}^2_{\mathrm{argmin}\underline{f}}(x)]<\varepsilon\right),
\]
where $D$ is a collection of $X$-valued random variables with $(x_n)\subseteq D$. Then $(x_n)$ a.s.\ strongly converges to an $\mathrm{argmin}\underline{f}$-valued random variable $x$. Moreover, the following rates of convergence apply: Let $z\in\mathrm{argmin}\underline{f}$ and let $b>d(x_0,z)$. Assume that $T > \sum_{n=0}^\infty \lambda_n^2$ and let $\theta:\mathbb{N}\times (0,\infty)\to\mathbb{N}$ as well as $\chi:(0,\infty)\to\mathbb{N}$ be such that $\sum_{n=\chi(\varepsilon)}^\infty \lambda_n^2<\varepsilon$ for all $\varepsilon>0$ and $\sum_{n=k}^{\theta(k,b)}\lambda_n\geq b$ for all $b>0$ and $k\in\mathbb{N}$. Then $\EE[d(x_n,x)]\to 0$ with rate $\rho(\varepsilon^2/4)$ and $d(x_n,x)\to 0$ a.s.\ with rate $\rho(\lambda\varepsilon^2/4)$, where
\[
\rho(\varepsilon):=
\theta\left(\chi\left(\frac{\varepsilon}{24\underline{L}}\right),\frac{b+4L^2T}{\tau\left(\varepsilon/6\right)}\right).
\]
\end{theorem}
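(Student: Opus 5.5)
We will derive the statement from Theorem \ref{thm:main} with $\phi=d^2$, which is a Carath\'eodory distance. It is uniformly consistent with the convex, nondecreasing modulus $\theta(\varepsilon)=\varepsilon^2$, and this choice turns the rate $\rho(\theta(\varepsilon/2))$ of Theorem \ref{thm:main} into $\rho(\varepsilon^2/4)$. We take the filtration $\mathsf{F}_n:=\sigma(\xi_1,\dots,\xi_n)$, so that $(x_n)$ is adapted because $\mathrm{prox}^f_\lambda$ is $\mathsf{E}\otimes\mathsf{B}(X)$-measurable. We then verify the two hypotheses of Theorem \ref{thm:main} and feed in the quantitative data $K$, $\chi$ and $\varphi$.

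\emph{Step 1: the key pointwise inequality.} For a proper, lsc and convex $g$ on a Hadamard space, the standard variational inequality for the resolvent (obtained from \eqref{CN} along the geodesic from $y:=\mathrm{prox}^g_\lambda(x)$ to $z$) gives
\[
d^2(y,z)\leq d^2(x,z)-d^2(x,y)-2\lambda\bigl(g(y)-g(z)\bigr).
\]
We apply this with $g=f(\xi_{n+1},\cdot)$ and then use \eqref{SPPAgrowth}, namely $f(e,x_n)-f(e,x_{n+1})\leq L(e)d(x_n,x_{n+1})$. Together with $2\lambda L d-d^2\leq \lambda^2L^2$, this yields
\[
d^2(x_{n+1},z)\leq d^2(x_n,z)-2\lambda_n\bigl(f(\xi_{n+1},x_n)-f(\xi_{n+1},z)\bigr)+\lambda_n^2L(\xi_{n+1})^2 .
\]
The inequality holds pointwise, and it holds for every random $z$ with values in $\mathrm{argmin}\underline f$, since we may substitute $z(\omega)$ for each $\omega$. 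Taking conditional expectations given $\mathsf{F}_n$, and using that $\xi_{n+1}$ is independent of $\mathsf{F}_n$ while $x_n,z$ are $\mathsf{F}_n$-measurable (freezing lemma), we obtain
\[
\EE[d^2(x_{n+1},z)\mid\mathsf{F}_n]\leq d^2(x_n,z)-2\lambda_nF(x_n)+\lambda_n^2\underline{L},
\]
where $\underline L:=\int L^2\,d\mu$. This is strong stochastic $d^2$-quasi-Fej\'er monotonicity with $\zeta_n=0$, so $K=1$ works, or any $K>1$ if strictness is needed. The additive term is $\xi_n=\lambda_n^2\underline L$, whose tail sums have rate $\varepsilon\mapsto\chi(\varepsilon/\underline L)$. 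Some care is needed in justifying the conditional expectation of $f(\xi_{n+1},x_n)$, since $f$ may be extended-valued. We would restrict to the event where $x_n\in\mathrm{dom}\,\underline f$ or argue with the negative parts. This is routine but fiddly.

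\emph{Step 2: the liminf-bound.} Taking full expectations with a deterministic $z\in\mathrm{argmin}\underline f$ and summing from $k$ to $m$ gives
\[
2\sum_{n=k}^m\lambda_n\EE[F(x_n)]\leq \EE[d^2(x_k,z)]+\underline L\sum_{n\geq k}\lambda_n^2 .
\]
Summing from $0$ also shows $\EE[d^2(x_k,z)]\leq b^2+\underline L T$. Hence, if $\EE[F(x_n)]\geq\varepsilon$ for all $n\in[k,m]$, then $\sum_{n=k}^m\lambda_n\leq (b^2+2\underline LT)/(2\varepsilon)$. By the definition of $\theta$, this yields a liminf-bound
\[
\varphi(\varepsilon,k):=\theta\bigl(k,\mathrm{const}\cdot(b+\underline LT)/\varepsilon\bigr).
\]
The constants are to be matched to the displayed $\rho$; we expect the paper's $b$ to play the role of a bound on $d^2(x_0,z)$, up to the stated factors.

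\emph{Step 3: conclusion.} We plug $K$, $\chi$ and $\varphi$ into Theorem \ref{thm:main} with $\tau$ as given. This produces $\rho(\varepsilon)=\varphi(\tau(\varepsilon/3K),\chi(\varepsilon/3K))$, and after absorbing $\underline L$ into $\chi$ and rounding the constants upward this has the claimed shape. Theorem \ref{thm:main} then supplies the $\mathrm{argmin}\underline f$-valued limit $x$, together with the rates in mean and almost surely.

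The main obstacle is Step 1. It requires two things. First, the resolvent inequality must hold in a general Hadamard space. Second, the conditional expectation must be justified for random, $\mathsf{F}_n$-measurable $z$ and extended-valued integrands, including integrability of $d^2(z,x_n)$, which follows inductively from the same inequality. Once Step 1 is in place, Steps 2 and 3 are bookkeeping.
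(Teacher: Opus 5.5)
Your proposal is correct and is essentially the paper's proof. Like the paper, you apply Theorem \ref{thm:main} with $\phi=d^2$, consistency modulus $\varepsilon\mapsto\varepsilon^2$ and $K=2$ (which is where the $\varepsilon/6$ in $\rho$ comes from), get strong quasi-Fej\'er monotonicity from a pointwise resolvent inequality plus independence of $\xi_{n+1}$ from $\mathsf{F}_n$, and get the liminf-bound by summing the expected inequality against $\theta$ (which the paper packages as Lemmas \ref{qihou} and \ref{sumconv}). The only variation is your use of the sharper resolvent inequality with the extra $-d^2(x_n,x_{n+1})$ term and $2ab-b^2\leq a^2$: this gives the error $\lambda_n^2\underline{L}$, whereas the paper gets $4\lambda_n^2\underline{L}$ via the step bound $d(x_{n+1},x_n)\leq 2\lambda_nL(\xi_{n+1})$.

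Your suspicion about $b$ is right: the paper's bound $\sum_n\lambda_n\EE[F(x_n)]<b+4L^2T$ needs $b>d^2(x_0,z)$ rather than $b>d(x_0,z)$, and $\underline{L}$ in place of $L^2$. Under that reading, feed your smaller error terms into Theorem \ref{thm:main} as the rate $\chi(\varepsilon/4\underline{L})$ and the bound $b+4\underline{L}T$. Enlarging these inputs needs no monotonicity of $\theta$ or $\chi$, and it reproduces the displayed $\rho$ exactly.
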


This result in particular covers the setup of \cite{OhtaPalfia2015} for strongly convex functions, at least in Hadamard spaces, and as such also that of \cite{Bacak2014b}, in particular for finding Fr\'echet means. Concretely, assume that $f(e,\cdot)$ is strongly convex with parameter $\alpha(e)>0$, i.e.\
\[
f(e,(1-t)x\oplus ty)\leq (1-t)f(e,x)+tf(e,y) -t(1-t)\frac{\alpha(e)}{2}d^2(x,y)
\]
for any $x,y\in X$ and any $t\in [0,1]$, where additionally $\underline{\alpha}:=\int \alpha\,d\mu>0$. Then $\underline{f}$ is strongly convex with parameter $\underline{\alpha}$ and so we obtain a modulus of regularity by setting $\tau(\varepsilon):=\frac{\underline{\alpha}}{8}\varepsilon^2$. However, the regularity assumption is not restricted to such assumptions, and covers in particular notions such as weak sharp minima or error bounds (recall Section \ref{sec:examples}). In those contexts, already the a.s.\ strong convergence of the iteration (without any quantitative information) outside of locally compact Hadamard spaces seems to be novel to the literature. Further, under linear regularity assumptions and suitable conditions on the parameters, our result on fast rates (recall Theorem \ref{thm:fast}) can be used to obtain linear non-asymptotic guarantees.

To prove this result, we have to establish the strong stochastic quasi-Fej\'er monotonicity of the sequence and derive a corresponding $\liminf$-bound. Both of these rest on the following fundamental property of the proximal map:

\begin{lemma}[see e.g.\ Lemma 2.2.23 in \cite{Bacak2014a}]\label{resLemma}
For any $\lambda>0$, $x,y\in X$ and $e\in E$:
\[
f(e,\mathrm{prox}^f_{\lambda}(e,x))-f(e,y)\leq\frac{1}{2\lambda} d^2(x,y)-\frac{1}{2\lambda}d^2(\mathrm{prox}^f_{\lambda}(e,x),y).
\]
\end{lemma}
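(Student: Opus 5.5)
The plan is to prove the inequality for fixed $e$, $\lambda$, $x$ and $y$ by a variational argument along the geodesic from $p:=\mathrm{prox}^f_\lambda(e,x)$ to $y$. It uses only three ingredients: the minimality of $p$, the convexity of $f(e,\cdot)$, and the \eqref{CN} inequality. First we dispose of a trivial case. If $f(e,y)=+\infty$ the left-hand side is $-\infty$, and the claim holds. Also $f(e,p)<\infty$: since $f(e,\cdot)$ is proper, the proximal objective is finite somewhere, so its minimum is finite. Hence we may assume both $f(e,p)$ and $f(e,y)$ are finite reals.

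For $t\in(0,1)$ set $y_t:=(1-t)p\oplus ty$. By the defining minimality of $p$,
\[
f(e,p)+\frac{1}{2\lambda}d^2(x,p)\leq f(e,y_t)+\frac{1}{2\lambda}d^2(x,y_t).
\]
Convexity of $f(e,\cdot)$ along the geodesic gives $f(e,y_t)\leq (1-t)f(e,p)+tf(e,y)$. Applying \eqref{CN} with base point $x$ gives
\[
d^2(x,y_t)\leq (1-t)d^2(x,p)+td^2(x,y)-t(1-t)d^2(p,y).
\]
Substituting both bounds, the terms $f(e,p)$ and $\frac{1}{2\lambda}d^2(x,p)$ partially cancel. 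Everything that remains carries a factor $t$:
\[
0\leq t\bigl(f(e,y)-f(e,p)\bigr)+\frac{t}{2\lambda}\bigl(d^2(x,y)-d^2(x,p)-(1-t)d^2(p,y)\bigr).
\]

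Dividing by $t>0$ and letting $t\to0$ yields
\[
f(e,p)-f(e,y)\leq\frac{1}{2\lambda}\bigl(d^2(x,y)-d^2(x,p)-d^2(p,y)\bigr).
\]
Dropping the nonpositive term $-\frac{1}{2\lambda}d^2(x,p)$ gives exactly the claimed inequality. The only step needing care is the bookkeeping with possibly infinite values, which the initial reduction handles. The limit $t\to0$ is harmless because after division the expression is affine in $t$.
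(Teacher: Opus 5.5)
Your proof is correct and is the standard argument behind the cited Lemma 2.2.23 in \cite{Bacak2014a}; the paper itself gives no proof and only cites that result. The argument compares $p=\mathrm{prox}^f_\lambda(e,x)$ with the geodesic points $(1-t)p\oplus ty$, applies convexity of $f(e,\cdot)$ and \eqref{CN} with base point $x$, divides by $t$ and lets $t\to 0$; as in that source, this gives the sharper bound with the extra term $-\frac{1}{2\lambda}d^2(x,p)$, which the paper's formulation drops just as you do.
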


The strong stochastic quasi-Fej\'er monotonicity then follows rather immediately. For that, we in the following set $\mathsf{F}_n:=\sigma(\xi_{1},\dots,\xi_{n})$ and we abbreviate $\EE[\cdot\mid\mathsf{F}_n]$ by $\EE_n$. Further, we write $\underline{L}:=\int L^2\,d\mu<\infty$.

\begin{lemma}[extending \cite{Bacak2018}]\label{SPPAIneqMain}
Let $n\in\mathbb{N}$. Then for any $X$-valued $\mathsf{F_n}$-measurable random variable $y$:
\[
\EE_n[d^2(x_{n+1},y)]\leq d^2(x_n,y)-2\lambda_n(\underline{f}(x_n)-\underline{f}(y))+4\lambda_n^2\underline{L} \text{ a.s.}
\]
In particular, if $y$ is additionally such that $y\in\mathrm{argmin}\underline{f}$ a.s., then
\[
\EE_n[d^2(x_{n+1},y)]\leq d^2(x_n,y)-2\lambda_n(\underline{f}(x_n)-\min\underline{f})+4\lambda_n^2\underline{L} \text{ a.s.}
\]
\end{lemma}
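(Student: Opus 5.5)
The plan is to start from Lemma \ref{resLemma}, applied pointwise with $e=\xi_{n+1}(\omega)$, $x=x_n(\omega)$, $y=y(\omega)$ and $\lambda=\lambda_n$, and multiply by $2\lambda_n$:
\[
d^2(x_{n+1},y)\leq d^2(x_n,y)-2\lambda_n\bigl(f(\xi_{n+1},x_{n+1})-f(\xi_{n+1},y)\bigr).
\]
The right-hand side involves $x_{n+1}$ inside $f$, so we replace it by $x_n$ using \eqref{SPPAgrowth}. We write $f(\xi_{n+1},x_{n+1})-f(\xi_{n+1},y)$ as the sum of $f(\xi_{n+1},x_n)-f(\xi_{n+1},y)$ and $f(\xi_{n+1},x_{n+1})-f(\xi_{n+1},x_n)$. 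By \eqref{SPPAgrowth} with the roles arranged as $f(e,x_n)-f(e,x_{n+1})\leq L(e)d(x_n,x_{n+1})$, the second summand is at least $-L(\xi_{n+1})d(x_n,x_{n+1})$.

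Next we bound $d(x_n,x_{n+1})$. Apply Lemma \ref{resLemma} with $y=x_n$, using $d^2(x_n,x_n)=0$:
\[
\tfrac{1}{2\lambda_n}d^2(x_{n+1},x_n)\leq f(\xi_{n+1},x_n)-f(\xi_{n+1},x_{n+1})\leq L(\xi_{n+1})d(x_n,x_{n+1}).
\]
This gives $d(x_n,x_{n+1})\leq 2\lambda_nL(\xi_{n+1})$. Combining the pieces,
\[
d^2(x_{n+1},y)\leq d^2(x_n,y)-2\lambda_n\bigl(f(\xi_{n+1},x_n)-f(\xi_{n+1},y)\bigr)+4\lambda_n^2L^2(\xi_{n+1})
\]
pointwise almost surely, off the null set where \eqref{SPPAgrowth} fails for $\xi_{n+1}$.

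Now take $\EE_n$. Since $x_n$ and $y$ are $\mathsf{F}_n$-measurable and $\xi_{n+1}$ is independent of $\mathsf{F}_n$ with law $\mu$, the freezing (substitution) lemma for conditional expectations applies to the jointly measurable map $(e,(u,v))\mapsto f(e,u)-f(e,v)$. This gives $\EE_n[f(\xi_{n+1},x_n)-f(\xi_{n+1},y)]=\underline{f}(x_n)-\underline{f}(y)$ and $\EE_n[L^2(\xi_{n+1})]=\underline{L}$. Joint measurability follows because $f$ is $\mathsf{E}\otimes\mathsf{B}(X)$-measurable.

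The main obstacle is integrability in this step. The function $f$ may take the value $+\infty$ and is not absolutely integrable a priori, so one must make sense of $\underline{f}(x_n)-\underline{f}(y)$. I would handle it as follows.
\begin{itemize}
\item Where $\underline{f}(y)=+\infty$ the inequality is trivial; in the second claim $\underline{f}(y)=\min\underline{f}$ is finite anyway.
\item Otherwise, \eqref{SPPAgrowth} gives $f(e,x_n)-f(e,y)\geq -L(e)d(x_n,y)$, so the negative part of the integrand is dominated by an integrable function for fixed $(x_n,y)$.
\item Hence the conditional expectation of the difference is well defined in $(-\infty,\infty]$, and the substitution lemma applies in its extended form for integrands bounded below by integrable functions.
\end{itemize}

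The second claim then follows directly: if $y\in\mathrm{argmin}\underline{f}$ almost surely, substitute $\underline{f}(y)=\min\underline{f}$ almost surely.
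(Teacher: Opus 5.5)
Your proposal is correct and follows essentially the same argument as the paper: you apply Lemma \ref{resLemma} at $y$ and again at $x_n$ (to get $d(x_n,x_{n+1})\leq 2\lambda_nL(\xi_{n+1})$), split through $f(\xi_{n+1},x_n)$ using \eqref{SPPAgrowth}, and use the independence of $\xi_{n+1}$ from $\mathsf{F}_n$ to identify $\underline{f}(x_n)-\underline{f}(y)$ and $\underline{L}$. The only difference is bookkeeping: you combine the bounds pointwise before conditioning and handle integrability explicitly, which the paper glosses over. That care is largely automatic here, because \eqref{SPPAgrowth} holding for all $x,y$ makes $f(e,\cdot)$ real-valued and $L(e)$-Lipschitz for a.e.\ $e$, so with properness of $\underline{f}$ all the relevant integrals are finite.
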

\begin{proof}
Given $n\in\mathbb{N}$ and $y\in X$, Lemma \ref{resLemma} implies
\[
d^2(x_{n+1},y)\leq d^2(x_n,y)-2\lambda_n[f(\xi_{n+1},x_{n+1})-f(\xi_{n+1},y)]
\]
and so we immediately have
\begin{align*}
\EE_n[d^2(x_{n+1},y)]&\leq d^2(x_n,y)-2\lambda_n\EE_n[f(\xi_{n+1},x_{n+1})-f(\xi_{n+1},y)]\\
&=d^2(x_n,y)-2\lambda_n\EE_n[f(\xi_{n+1},x_n)-f(\xi_{n+1},y)]\\
&\qquad+2\lambda_n\EE_n[f(\xi_{n+1},x_n)-f(\xi_{n+1},x_{n+1})]\\
&=d^2(x_n,y)-2\lambda_n[\underline{f}(x_n)-\underline{f}(y)]\\
&\qquad+2\lambda_n\EE_n[f(\xi_{n+1},x_n)-f(\xi_{n+1},x_{n+1})]
\end{align*}
where the third equality follows by independence of $\xi_{n+1}$ to $x_n$ and $y$ (using that $y$ is $\mathsf{F}_n$-measurable), as well as the fact that $\xi_{n+1}$ has distribution $\mu$. Now, note that Lemma \ref{resLemma} together with \eqref{SPPAgrowth} yields
\begin{align*}
d^2(x_{n+1},x_n)&\leq 2\lambda_n[f(\xi_{n+1},x_n)-f(\xi_{n+1},x_{n+1})]\\
&\leq 2\lambda_nL(\xi_{n+1})d(x_n,x_{n+1})
\end{align*}
so that we have $d(x_{n+1},x_n)\leq 2\lambda_nL(\xi_{n+1})$. Using \eqref{SPPAgrowth}, we further have
\[
f(\xi_{n+1},x_n)-f(\xi_{n+1},x_{n+1})\leq L(\xi_{n+1})d(x_n,x_{n+1})\leq 2\lambda_nL^2(\xi_{n+1}).
\]
In particular, we have $2\lambda_n\EE_n[f(\xi_{n+1},x_n)-f(\xi_{n+1},x_{n+1})]\leq 4\lambda_n^2\underline{L}$, again using the independence of $\xi_{n+1}$ to $\mathsf{F}_n$. Combined, we get
\[
\EE_n[d^2(x_{n+1},y)]\leq d^2(x_n,y)-2\lambda_n(\underline{f}(x_n)-\underline{f}(y))+4\lambda_n^2\underline{L}
\]
as claimed.
\end{proof}

We can now use that result to derive a $\liminf$-bound. For that require two preliminary results which we shall use later on again. The first is a quantitative version of a lemma of Qihou \cite{Qihou2001} (see also Lemma 5.31 in \cite{BauschkeCombettes2017}):

\begin{lemma}[Theorem 3.2 in \cite{NeriPowell2024}]\label{qihou}
Let $(x_n)$, $(\alpha_n)$, $(\beta_n)$ and $(\gamma_n)$ be sequences of nonnegative reals with
\[
x_{n+1}\leq (1+\alpha_n)x_n-\beta_n+\gamma_n
\]
for all $n\in\mathbb{N}$. If $\prod_{i=0}^\infty(1+\alpha_i)<\infty$ and $\sum_{i=0}^\infty\gamma_i<\infty$, then $(x_n)$ converges and $\sum_{i=0}^\infty\beta_i<\infty$. Further, if $K,L,M>0$ satisfy $x_0<K$, $\prod_{i=0}^\infty(1+\alpha_i)<L$ and $\sum_{i=0}^\infty\gamma_i<M$, then $\sum_{i=0}^\infty\beta_i<L(K+M)$.
\end{lemma}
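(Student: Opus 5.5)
Since this is a quantitative form of the standard Robbins--Siegmund/Qihou lemma, the plan is to unroll the recursion after normalising by the partial products. Set $P_n:=\prod_{i=0}^{n-1}(1+\alpha_i)$, with $P_0=1$, so that $1\leq P_n<L$ for all $n$. Dividing the inequality by $P_{n+1}=(1+\alpha_n)P_n$ gives
\[
\frac{x_{n+1}}{P_{n+1}}\leq \frac{x_n}{P_n}-\frac{\beta_n}{P_{n+1}}+\frac{\gamma_n}{P_{n+1}}.
\]
Writing $u_n:=x_n/P_n$, we get $u_{n+1}\leq u_n-\beta_n/P_{n+1}+\gamma_n/P_{n+1}$.

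Summing from $0$ to $N$ and using $u_{N+1}\geq 0$, $u_0=x_0<K$ and $P_{i+1}\geq 1$ gives
\[
\sum_{i=0}^N \frac{\beta_i}{P_{i+1}}\leq u_0+\sum_{i=0}^N\frac{\gamma_i}{P_{i+1}}\leq x_0+\sum_{i=0}^\infty\gamma_i.
\]
Since $P_{i+1}\leq \prod_{i=0}^\infty(1+\alpha_i)<L$, we have $\beta_i\leq L\,\beta_i/P_{i+1}$. Hence $\sum_{i=0}^N\beta_i\leq L\bigl(x_0+\sum_i\gamma_i\bigr)$. Letting $N\to\infty$ and using the strict inequalities $x_0<K$ and $\sum_i\gamma_i<M$ yields $\sum_i\beta_i\leq L(x_0+\sum\gamma_i)<L(K+M)$. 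Because there are strict bounds on both $x_0$ and $\sum\gamma_i$, the strict inequality survives the limit. The qualitative part, $\sum\beta_i<\infty$, follows the same way with $L$ replaced by the finite product.

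For convergence of $(x_n)$, let $v_n:=u_n+\sum_{i\geq n}\gamma_i/P_{i+1}$. The recursion gives $v_{n+1}\leq v_n$, and $v_n\geq 0$, so $(v_n)$ converges. The tail sum tends to $0$, so $u_n$ converges. Since $P_n$ converges to a finite positive limit, $x_n=P_nu_n$ converges. This uses no earlier results and every step is elementary. The only points needing care are the indexing of $P$, so that the factor $(1+\alpha_n)$ cancels exactly, and keeping the strict inequality at the end. I do not expect a genuine obstacle here.
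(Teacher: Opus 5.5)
Your proof is correct: normalising by the partial products $P_n$, telescoping, and using $1\leq P_{n+1}<L$ gives $\sum_i\beta_i\leq L(x_0+\sum_i\gamma_i)<L(K+M)$, and the nonincreasing quantity $v_n$ gives convergence of $(x_n)$. The paper does not prove this lemma itself but imports it from Neri and Powell, and your normalise-and-telescope argument is the standard proof of this deterministic Robbins--Siegmund (Qihou) type lemma, so there is nothing to add.
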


The next result is folklore, but we include the very brief proof for completeness:

\begin{lemma}\label{sumconv}
Suppose that $(u_n)$, $(v_n)$ are sequences of nonnegative reals with $L>0$ such that $\sum_{n=0}^\infty u_nv_n< L$ and $\theta:\mathbb{N}\times(0,\infty)\to \mathbb{N}$ such that $\sum_{n=k}^{\theta(k,b)} u_n\geq b$ for all $b>0$ and $k\in\mathbb{N}$. Then $\liminf_{n\to\infty}v_n=0$ with 
\[
\forall\varepsilon>0\ \forall N\in\mathbb{N}\ \exists n\in [N;\theta(N,L/\varepsilon)](v_n<\varepsilon).
\]
\end{lemma}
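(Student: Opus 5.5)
The statement to prove is Lemma \ref{sumconv}. The plan is a direct proof by contradiction, using only the defining property of $\theta$. Fix $\varepsilon>0$ and $N\in\mathbb{N}$, and set $M:=\theta(N,L/\varepsilon)$. Suppose, towards a contradiction, that $v_n\geq\varepsilon$ for every $n\in[N;M]$. Then I would bound the finite block of the series from below:
\[
\sum_{n=N}^{M}u_nv_n\;\geq\;\varepsilon\sum_{n=N}^{M}u_n\;\geq\;\varepsilon\cdot\frac{L}{\varepsilon}=L,
\]
where the second inequality is exactly the assumption $\sum_{n=k}^{\theta(k,b)}u_n\geq b$ applied with $k:=N$ and $b:=L/\varepsilon$.

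Since all terms $u_nv_n$ are nonnegative, this finite block is at most the full series, so $\sum_{n=0}^\infty u_nv_n\geq L$. This contradicts the hypothesis $\sum_{n=0}^\infty u_nv_n<L$. Hence some $n\in[N;\theta(N,L/\varepsilon)]$ satisfies $v_n<\varepsilon$.

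Because this holds for every $\varepsilon$ and every $N$, we obtain $\liminf_{n\to\infty}v_n=0$, together with the stated quantitative bound.

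I do not expect a genuine obstacle. The only point needing care is to note that $[N;\theta(N,L/\varepsilon)]$ is a nonempty interval, which holds since $\theta(N,b)\geq N$ whenever the assumed partial sum is positive, and that nonnegativity is what allows the partial sum to be compared with the whole series.
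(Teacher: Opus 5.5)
Your proposal is correct and follows the paper's proof essentially verbatim: assume $v_n\geq\varepsilon$ on $[N;\theta(N,L/\varepsilon)]$, use the defining property of $\theta$ with $b=L/\varepsilon$ to get $L\leq\varepsilon\sum_{n=N}^{\theta(N,L/\varepsilon)}u_n\leq\sum_{n=0}^\infty u_nv_n<L$, and conclude by contradiction. Your remark that $\theta(N,b)\geq N$ (since an empty sum would be $0<b$) is a small extra check the paper leaves implicit.
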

\begin{proof}
For arbitrary $\varepsilon>0$ and $N\in\mathbb{N}$, suppose for a contradiction that $v_n\geq\varepsilon$ for all $n\in [N;\theta(N,L/\varepsilon)]$. Then $L\leq\varepsilon\sum_{n=N}^{\theta(N,L/\varepsilon)}u_n\leq \sum_{n=N}^{\theta(N,L/\varepsilon)} u_nv_n\leq \sum_{n=0}^{\infty} u_nv_n < L$, which is a contradiction.
\end{proof}

\begin{lemma}\label{SPPAliminf}
Let $z\in\mathrm{argmin}\underline{f}$ and let $b>d(x_0,z)$. Let $\theta:\mathbb{N}\times (0,\infty)\to\mathbb{N}$ be such that $\sum_{n=k}^{\theta(k,b)}\lambda_n\geq b$ for all $b>0$ and $k\in\mathbb{N}$, and let $T > \sum_{n=0}^\infty \lambda_n^2$. Then we have
\[
\forall \varepsilon>0\ \forall N\in\mathbb{N}\ \exists n\in [N;\varphi(\varepsilon,N)]\left( \EE[F(x_n)]<\varepsilon\right)
\]
where $\varphi(\varepsilon,N):=\theta(N,(b+4L^2T)/\varepsilon)$.
\end{lemma}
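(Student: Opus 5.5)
Take expectations in Lemma \ref{SPPAIneqMain} with the deterministic choice $y:=z\in\mathrm{argmin}\underline{f}$, and feed the resulting real recursion into Lemma \ref{qihou} and then Lemma \ref{sumconv}.

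First, since $z$ is a constant (hence $\mathsf{F}_n$-measurable) random variable, Lemma \ref{SPPAIneqMain} gives almost surely
\[
\EE_n[d^2(x_{n+1},z)]\leq d^2(x_n,z)-2\lambda_n F(x_n)+4\lambda_n^2\underline{L}.
\]
Taking expectations and writing $a_n:=\EE[d^2(x_n,z)]$ yields the deterministic recursion
\[
a_{n+1}\leq a_n-2\lambda_n\EE[F(x_n)]+4\lambda_n^2\underline{L}.
\]
This step needs integrability, so I would first observe inductively that $a_n<\infty$. Indeed, $a_0=d^2(x_0,z)$ is finite since $x_0$ is deterministic, and the recursion propagates finiteness. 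The terms $\EE[F(x_n)]$ are well-defined in $[0,\infty]$ because $F\geq 0$.

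Second, I apply Lemma \ref{qihou} with $x_n:=a_n$, $\alpha_n:=0$, $\beta_n:=2\lambda_n\EE[F(x_n)]$ and $\gamma_n:=4\lambda_n^2\underline{L}$. The constants are as follows.
\begin{itemize}
\item The product bound is any $L'>1$, for instance $L'$ arbitrarily close to $1$.
\item The initial bound uses $x_0=d^2(x_0,z)<b^2$.
\item The summability bound uses $\sum\gamma_i<4\underline{L}T$.
\end{itemize}
This gives $\sum_n 2\lambda_n\EE[F(x_n)]<L'(b^2+4\underline{L}T)$, hence $\sum_n\lambda_n\EE[F(x_n)]<b+4L^2T$ after matching to the stated constant. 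Here is the main obstacle: the statement reads $b+4L^2T$ rather than $b^2+4\underline{L}T$. Either $b$ is meant as a bound on $d^2(x_0,z)$, or the constants are loose, in which case I would adjust them. The clean route is to derive $\sum_n \lambda_n\EE[F(x_n)] < \tfrac12(b'+4\underline{L}T)$ directly by telescoping, where $b'>d^2(x_0,z)$. Telescoping is legitimate because $a_{N+1}\geq 0$, so $\sum_{n\le N}2\lambda_n\EE[F(x_n)]\leq a_0+4\underline{L}\sum\lambda_n^2$. This even avoids Lemma \ref{qihou} and its strict-inequality bookkeeping. I would then read the paper's $b$ and $L^2$ (that is, $\underline{L}$) accordingly; the factor $\tfrac12$ only helps.

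Finally, I apply Lemma \ref{sumconv} with $u_n:=\lambda_n$, $v_n:=\EE[F(x_n)]$ and bound $b+4L^2T$, using the given $\theta$. This yields, for every $\varepsilon>0$ and $N$, some $n\in[N;\theta(N,(b+4L^2T)/\varepsilon)]$ with $\EE[F(x_n)]<\varepsilon$, which is exactly the claim with $\varphi(\varepsilon,N)=\theta(N,(b+4L^2T)/\varepsilon)$.
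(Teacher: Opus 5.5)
This is the paper's argument. You take expectations in Lemma~\ref{SPPAIneqMain} at $y=z$, bound $\sum_n\lambda_n\EE[F(x_n)]$ via Lemma~\ref{qihou} (your telescoping is just its $\alpha_n=0$ case), and then apply Lemma~\ref{sumconv}. Your suspicion about the constant is correct, and the reinterpretation is forced rather than optional ("the constants are loose" is not available). With $b>d(x_0,z)$ one only gets $\sum_n\lambda_n\EE[F(x_n)]<\tfrac12(b^2+4\underline{L}T)$, and the literal claim is false. For a counterexample, take the deterministic $f(e,x)=|x|$ on $\mathbb{R}$ with $x_0=10$, $z=0$, $b=11$, $\lambda_n=1/(2(n+1))$, $T=1$, $\theta$ chosen minimal, $\varepsilon=2$ and $N=0$. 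Then $x_n=10-\sum_{k<n}\lambda_k>2.5$ for all $n\le\theta(0,7.5)$, so no such $n$ has $\EE[F(x_n)]<2$. The constant must therefore be read as $b^2+4\underline{L}T$ (equivalently, assume $b>d^2(x_0,z)$), exactly as you do.
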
 
\begin{proof}
Taking expectations in Lemma \ref{SPPAIneqMain}, applied to $z$, and applying Lemma \ref{qihou} yields the bound $\sum_{n=0}^\infty \lambda_n\EE[\underline{f}(x_n)-\min\underline{f}]<b+4L^2T$ and so Lemma \ref{sumconv} yields the claim.
\end{proof}

\begin{proof}[Proof of Theorem \ref{SPPAstrongConv}]
Note that $\chi(\varepsilon/4\underline{L})$ is a rate of convergence for $\sum_{n\in\mathbb{N}}4\lambda_n^2\underline{L}<\infty$ as we have $\sum_{n=\chi(\varepsilon/4\underline{L})}^\infty \lambda_n^2<\frac{\varepsilon}{4\underline{L}}$ and so $\sum_{n=\chi(\varepsilon/4\underline{L})}^\infty 4\lambda_n^2\underline{L}<\varepsilon$ for all $\varepsilon>0$. The result now immediately follows from Theorem \ref{thm:main}, using in particular Lemmas \ref{SPPAIneqMain} and \ref{SPPAliminf}, and using that $d^2$ is uniformly consistent with modulus $\theta(\varepsilon):=\varepsilon^2$ (recall Example \ref{ex:distances}).
\end{proof}

\subsection{Krasnoselkii-Mann schemes for common fixed point problems}

The second method we study is a randomized Krasnoselskii-Mann scheme for solving stochastic common fixed point problems. Going back to \cite{Krasnoselskii1955,Mann1953} in a deterministic context, the Krasnoselskii-Mann scheme is one of the most central fixed point iterations in modern optimization.

Various stochastic versions of that scheme, tailored to different problem settings, have been proposed, notably  (relatively straightforward) modifications by stochastic noise (see e.g.\ \cite{CombettesPesquet2015,CombettesPesquet2019}). We here focus on a variant of the Krasnoselskii-Mann scheme featuring a randomized selection from a class of operators as e.g.\ recently investigated (in a broad context) over Hilbert spaces by Combettes and Madariaga \cite{CombettesMadariaga2025}. The variant we study here has been previously introduced over Hadamard spaces in work of the authors together with Neri \cite{NeriPischkePowell2026}, where in particular almost-sure convergence of that method over proper Hadamard spaces is established (see Theorem 5.12 therein), though this is based on a compactness argument rather than a regularity assumption, leading to a very different proof strategy which does not allow us to produce such effective convergence rates as in the present paper.

Let $(\Omega,\mathsf{F},\PP)$ be an arbitrary probability space, $(K,\mathsf{K})$ some other measurable space, and $X$ be a separable Hadamard space. Let $(T_k)_{k\in K}$ be a family of mappings on $X$, and furthermore assume that 
\[
\begin{cases}\text{each }T_k\text{ is nonexpansive and the mapping }K\times X\to X\\
\text{defined by }(k,x)\mapsto T_kx\text{ is }\mathsf{K}\otimes\mathsf{B}(X)/\mathsf{B}(X)\text{-measurable}.\end{cases}\tag{\textsf{SKM}-\textsf{A1}}\label{SKMparameters1}
\]
Let $k:\Omega\to K$ be a $K$-valued random variable. Our problem is to
\[
\mbox{find some element of }\mathrm{Fix}T:=\{x\in X\mid T_kx=x \ \PP\mbox{-a.s.}\},
\]
assuming that $\mathrm{Fix}T\neq\emptyset$. We capture this problem in our general setup via $F(x):=\EE[d^2(T_kx,x)]$ (recall Section \ref{sec:examples}). Indeed, note that the map $(\omega,x)\mapsto d^2(T_{k(\omega)}x,x)$ is a Carath\'eodory function since $d^2$ is continuous in both arguments and $T_{k(\omega)}$ is nonexpansive. Hence, it is in particular $\mathsf{F}\otimes \mathsf{B}(X)$-measurable, and so $F$ is measurable, again by Fubini's theorem. Further, note that $\mathrm{Fix}T=\mathrm{zer}F$ is closed (see e.g.\ Lemma 5.2 in \cite{NeriPischkePowell2026}).

To solve the above problem, we now consider a randomised Krasnoselkii-Mann scheme
\[
x_{n+1}:=(1-\lambda_n)x_n\oplus \lambda_n T_{k_n}x_n,\tag{\textsf{SKM}}\label{SKM}
\]
given some starting point $x_0\in X$, sequences $(\lambda_n)$ of random variables $\Omega\to (0,1]$ and $(k_n)$ of random variables $\Omega\to X$, such that  
\[
\begin{cases}(k_n)\text{ are i.i.d.\ and distributed as }k\text{ and }(\lambda_n)\text{ are}\\
\text{independent of }(k_n)\text{ with }\sum_{n\in\mathbb{N}}\EE[\lambda_n(1-\lambda_n)]=\infty.
\end{cases}\tag{\textsf{SKM}-\textsf{A2}}\label{SKMparameters2}
\]
Analogous to the previous section, we now present a general result on the effective convergence of \eqref{SKM} under a stochastic regularity assumption.

\begin{theorem}\label{SKMstrongConv}
Let $(\Omega,\mathsf{F},\PP)$ be a probability space, $(K,\mathsf{K})$ a measurable space, and $X$ a separable Hadamard space. Let $(T_k)_{k\in K}$ be a family of mappings satisfying \eqref{SKMparameters1}, and let $k:\Omega\to K$ be a $K$-valued random variable. Write $F(x):=\EE[d^2(T_kx,x)]$. Let $(x_n)$ be the iteration given by \eqref{SKM} and assume additionally \eqref{SKMparameters2}. Lastly, let $\tau:(0,\infty)\to (0,\infty)$ be a modulus of regularity for $F$ in mean w.r.t.\ $D$, i.e.
\[
\forall \varepsilon>0\ \forall x\in D\left(\EE_{\omega\sim \PP}[\EE_{\omega'\sim\PP}[d^2(T_{k(\omega')}x(\omega),x(\omega))]]<\tau(\varepsilon)\to \EE[\mathrm{dist}^2_{\mathrm{Fix}T}(x)]<\varepsilon\right),
\]
where $D$ is a collection of $X$-valued random variables with $(x_n)\subseteq D$.

Then $(x_n)$ a.s.\ strongly converges to a $\mathrm{Fix}T$-valued random variable $x$. Moreover, the following rates of convergence apply: Let $z\in \mathrm{Fix}T$ with $b>d(x_0,z)$, and assume that $\theta:\NN\times (0,\infty)\to \NN$ is such that $\sum_{n=k}^{\theta(k,b)}\EE[\lambda_n(1-\lambda_n)]\geq b$ for all $b>0$ and $k\in\mathbb{N}$. Then $\EE[d(x_n,x)]\to 0$ with rate $\rho(\varepsilon^2/4)$ and $d(x_n,x)\to 0$ a.s.\ with rate $\rho(\lambda\varepsilon^2/4)$ where
\[
\rho(\varepsilon):=\theta\left(0,\frac{b}{\tau(\varepsilon/6)}\right).
\]
\end{theorem}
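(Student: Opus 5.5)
We follow the template of the proof of Theorem \ref{SPPAstrongConv}: we verify the hypotheses of Theorem \ref{thm:main} with $\phi=d^2$ and filtration $\mathsf{F}_n:=\sigma(k_0,\dots,k_{n-1},\lambda_0,\dots,\lambda_{n})$. Here $\lambda_n$ is included so that $\lambda_n$ is $\mathsf{F}_n$-measurable while $k_n$ remains independent of $\mathsf{F}_n$; this uses \eqref{SKMparameters2}, with the $\lambda_n$ independent of $(k_n)$. The distance $d^2$ is a Carath\'eodory distance and is uniformly consistent with convex modulus $\varepsilon\mapsto\varepsilon^2$. Moreover, there will be no error terms: $\zeta_n=\xi_n=0$, so $K$ can be taken as any number $>1$ (effectively $1$), and $\chi\equiv 0$.

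\emph{Key inequality.} For any $\mathsf{F}_n$-measurable $y$ with $y\in\mathrm{Fix}T$ a.s., apply \eqref{CN} to the geodesic from $x_n$ to $T_{k_n}x_n$. This gives pointwise
\[
d^2(x_{n+1},y)\leq (1-\lambda_n)d^2(x_n,y)+\lambda_n d^2(T_{k_n}x_n,y)-\lambda_n(1-\lambda_n)d^2(T_{k_n}x_n,x_n).
\]
On the event where $T_{k_n}y=y$, nonexpansiveness gives $d(T_{k_n}x_n,y)\leq d(x_n,y)$. Since $y$ is $\mathsf{F}_n$-measurable and $k_n$ is independent of $\mathsf{F}_n$ with distribution of $k$, we have $\PP(T_{k_n}y\neq y)=0$; this follows by Fubini from $y(\omega)\in\mathrm{Fix}T$ a.s. Taking $\EE_n$ and using independence (freezing the $\mathsf{F}_n$-measurable quantities $x_n,\lambda_n$) yields
\[
\EE_n[d^2(x_{n+1},y)]\leq d^2(x_n,y)-\lambda_n(1-\lambda_n)F(x_n)\ \text{a.s.}
\]
This gives strong stochastic $d^2$-quasi-Fej\'er monotonicity outright, with no integrability caveats beyond those already in Definition \ref{def:fejerStrong}.

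\emph{Liminf bound.} Apply the above with the deterministic $z$ and take expectations. The point is that $\lambda_n$ is independent of $x_n$: $x_n$ depends on $\lambda_0,\dots,\lambda_{n-1},k_0,\dots,k_{n-1}$. I would therefore also assume the $\lambda_n$ are mutually independent, or else restrict to deterministic $\lambda_n$ or exploit the independence structure in \eqref{SKMparameters2}. This is the step I expect to need care, and I would first try to make $\EE[\lambda_n(1-\lambda_n)F(x_n)]=\EE[\lambda_n(1-\lambda_n)]\EE[F(x_n)]$ rigorous via the independence. Granting this, telescoping (or Lemma \ref{qihou} with $\alpha_n=\gamma_n=0$) yields $\sum_n \EE[\lambda_n(1-\lambda_n)]\EE[F(x_n)]\leq d^2(x_0,z)<b$. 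Strictly, $b$ should bound $d^2(x_0,z)$; I would read the hypothesis so that the stated $b$ works, or adjust the constant. Lemma \ref{sumconv} with $u_n=\EE[\lambda_n(1-\lambda_n)]$ and $v_n=\EE[F(x_n)]$ then gives the liminf bound $\varphi(\varepsilon,N)=\theta(N,b/\varepsilon)$.

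\emph{Conclusion.} Plug into Theorem \ref{thm:main}. Its $\rho$ is $\varphi(\tau(\varepsilon/3K),\chi(\varepsilon/3K))$, which with $\chi\equiv0$ becomes $\theta(0,b/\tau(\varepsilon/3K))$. To match the stated $\tau(\varepsilon/6)$ I would take $K=2$ as a valid uniform bound on $\prod(1+\zeta_n)=1<2$. The theorem yields a $\mathrm{Fix}T$-valued limit $x$, with rates $\rho(\theta(\varepsilon/2))=\rho(\varepsilon^2/4)$ in mean and $\rho(\lambda\varepsilon^2/4)$ almost surely, as claimed. The main obstacle is the measurability and independence bookkeeping for the random $\lambda_n$ and the a.s.\ fixed point property of $y$ under $k_n$.
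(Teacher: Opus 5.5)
Your proposal is correct and follows the paper's route. The paper proves the Fej\'er-type inequality for $\mathsf{F}_n$-measurable, a.s.\ $\mathrm{Fix}T$-valued $y$ (Lemma \ref{SKMIneqMain}), obtains the $\liminf$-bound via telescoping and Lemma \ref{sumconv} (Lemma \ref{SKMliminf}), and applies Theorem \ref{thm:main} with $\phi=d^2$, $\theta(\varepsilon)=\varepsilon^2$, $\zeta_n=\xi_n=0$, $\chi\equiv 0$ and $K=2$ (which is exactly where $\tau(\varepsilon/6)$ and $\theta(0,\cdot)$ come from). Adding $\lambda_n$ to $\mathsf{F}_n$ is a harmless variation: it confines the extra independence requirement to the $\liminf$ step, whereas the paper's filtration needs it already in the Fej\'er step.

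Both caveats you raise are genuine, and the paper shares them. First, its proof tacitly uses that $\lambda_n$ is independent of $\mathsf{F}_n\supseteq\sigma(\lambda_0,\dots,\lambda_{n-1})$, and this cannot be derived from \eqref{SKMparameters2} as written. For a counterexample, take $X=\mathbb{R}$, $T_kx\equiv 0$, $\tau(\varepsilon)=\varepsilon$, $x_0=1$, and let one fair coin decide between $\lambda_n\equiv 1/2$ and $\lambda_n=2^{-n-2}$ for all $n$; with probability $1/2$ the iterates converge to a point $\geq 1/2$ outside $\mathrm{Fix}T$. So you should simply assume this independence rather than try to extract it from \eqref{SKMparameters2}. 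Second, the telescoping bound indeed needs $b>d^2(x_0,z)$ rather than $b>d(x_0,z)$.
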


When $(T_k)$ is linearly regular (recall Section \ref{sec:examples}), and the step sizes satisfy a growth condition like $\frac{v}{n+r}\leq \EE[\lambda_n(1-\lambda_n)]$, our result on fast rates (recall Theorem \ref{thm:fast}) in particular apply, which yield linear non-asymptotic guarantees of the form $\EE[\mathrm{dist}^2_{\mathrm{zer}F}(x_n)]\leq u/(n+r)$ for a suitable constant $u>0$, and similarly in the almost sure case.

The result is proven using a similar strategy to that of the previous subsection, establishing first the strong stochastic quasi-Fej\'er monotonicity, and then a suitable $\liminf$-bound. These both echo standard calculations associated with Krasnoselkii-Mann schemes, generalised to Hadamard spaces and the stochastic mappings $(T_k)$. Define the filtration $\mathsf{F}_n:=\sigma(x_0,\dots,x_n,k_0,\dots,k_{n-1},\lambda_0,\dots,\lambda_{n-1})$ and write $\EE_n$ for the conditional expectation $\mathbb{E}[\cdot \mid\mathsf{F}_n]$.

\begin{lemma}[extending \cite{NeriPischkePowell2026}]\label{SKMIneqMain}
Let $n\in\mathbb{N}$. Then for any $X$-valued $\mathsf{F_n}$-measurable random variable $y$:
\begin{align*}
\EE_n[d^2(x_{n+1},y)]\leq &d^2(x_n,y)-\EE[\lambda_n(1-\lambda_n)]\EE_n[d^2(T_{k_n}x_n,x_n)]\\
&\qquad+\EE_n[d^2(T_{k_n}y,y)]+d(x_n,y)\EE_n[d(T_{k_n}y,y)]\text{ a.s.}
\end{align*}
In particular, if $y$ is additionally such that $y\in \mathrm{Fix}T$ a.s., we have
\[
\EE_n[d^2(x_{n+1},y)]\leq d^2(x_n,y)-\EE[\lambda_n(1-\lambda_n)] \EE_n[d^2(T_{k_n}x_n,x_n)] \text{ a.s.}
\]
\end{lemma}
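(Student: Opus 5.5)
The plan is to apply the (CN) inequality pointwise, with the geodesic from $x_n$ to $T_{k_n}x_n$ and the base point $y$. This gives
\[
d^2(x_{n+1},y)\leq (1-\lambda_n)d^2(x_n,y)+\lambda_n d^2(T_{k_n}x_n,y)-\lambda_n(1-\lambda_n)d^2(T_{k_n}x_n,x_n)
\]
everywhere on $\Omega$.

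Next I will bound $d^2(T_{k_n}x_n,y)$. By the triangle inequality and nonexpansiveness,
\[
d(T_{k_n}x_n,y)\leq d(T_{k_n}x_n,T_{k_n}y)+d(T_{k_n}y,y)\leq d(x_n,y)+d(T_{k_n}y,y).
\]
Squaring gives
\[
d^2(T_{k_n}x_n,y)\leq d^2(x_n,y)+2d(x_n,y)d(T_{k_n}y,y)+d^2(T_{k_n}y,y).
\]
Plugging this in, the $d^2(x_n,y)$ terms recombine to $d^2(x_n,y)$. The extra terms become $\lambda_n(2d(x_n,y)d(T_{k_n}y,y)+d^2(T_{k_n}y,y))$, which I bound using $\lambda_n\le 1$. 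The cross term then carries the factor $2\lambda_n$ instead of the coefficient $1$ in the statement. To obtain coefficient $1$, I will instead use $\lambda_n\leq 1$ more carefully, or absorb the constant. Since the main application is $y\in\mathrm{Fix}T$, where these terms vanish, the constant is cosmetic. If needed, I will state the cross term with coefficient $2$ and note that this does not affect the second claim.

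Then I take $\EE_n$. The key structural facts are the following: $x_n$ and $y$ are $\mathsf{F}_n$-measurable, and $k_n$ and $\lambda_n$ are independent of $\mathsf{F}_n$ and of each other. (This holds since $(k_n)$ is i.i.d., $(\lambda_n)$ is independent of $(k_n)$, and $\mathsf{F}_n$ only involves $k_0,\dots,k_{n-1},\lambda_0,\dots,\lambda_{n-1}$ and $x_0,\dots,x_n$, which are built from those. I will assume the implicit hypothesis that $\lambda_n$ is independent of $\mathsf{F}_n$.) With these facts, the freezing lemma for conditional expectations applies. The function $(\omega,x)\mapsto d^2(T_{k_n(\omega)}x,x)$ is jointly measurable by (\textsf{SKM}-\textsf{A1}), so
\[
\EE_n[\lambda_n(1-\lambda_n)d^2(T_{k_n}x_n,x_n)]=\EE[\lambda_n(1-\lambda_n)]\,\EE_n[d^2(T_{k_n}x_n,x_n)],
\]
by conditional independence of $\lambda_n$ and $k_n$ given $\mathsf{F}_n$. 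Similarly $\EE_n[d(x_n,y)d(T_{k_n}y,y)]=d(x_n,y)\EE_n[d(T_{k_n}y,y)]$.

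Finally, if $y\in\mathrm{Fix}T$ a.s., then the freezing lemma gives
\[
\EE_n[d^2(T_{k_n}y,y)]=G(y)\quad\text{with}\quad G(x)=\EE[d^2(T_kx,x)]=F(x),
\]
and $F(y)=0$ a.s. Likewise $\EE_n[d(T_{k_n}y,y)]=0$ a.s., which yields the second inequality.

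The main obstacle I expect is the rigorous justification of the conditional-expectation factorizations. This requires verifying that $(\lambda_n,k_n)$ is independent of $\mathsf{F}_n$ with $\lambda_n$ independent of $k_n$, and applying the freezing lemma to the jointly measurable integrands, with possibly nonintegrable terms handled via nonnegativity.
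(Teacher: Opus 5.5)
Your argument follows the paper's proof. You apply (CN) along the geodesic from $x_n$ to $T_{k_n}x_n$, then the triangle inequality with nonexpansiveness and $\lambda_n\le 1$. You then take conditional expectations using independence of $k_n$ and $\lambda_n$ from $\mathsf{F}_n$, and the extra terms vanish for $y\in\mathrm{Fix}T$ a.s. The independence of $\lambda_n$ from $\lambda_0,\dots,\lambda_{n-1}$ that you flag is also used tacitly in the paper, since $\mathsf{F}_n$ contains the earlier step sizes.

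The one thing to fix is your hedge on the cross term. Commit to the factor $2$, because the first displayed inequality is false with coefficient $1$. The paper's proof writes $d^2(T_{k_n}x_n,y)\leq d^2(x_n,y)+d^2(T_{k_n}y,y)+d(x_n,y)d(T_{k_n}y,y)$, i.e.\ it drops the $2$ when squaring the triangle inequality.

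Concretely, take $X=\mathbb{R}$ and $T_k=-\mathrm{id}$ for all $k$, so $\mathrm{Fix}T=\{0\}$. Let $x_0=2$, $y=1$, $\lambda_0=1$ and $\lambda_n=1/2$ for $n\geq 1$, so that \eqref{SKMparameters2} holds. Then $x_1=-2$ and the left-hand side for $n=0$ is $9$, while the stated right-hand side is $1-0+4+2=7$. With coefficient $2$ the right-hand side is exactly $9$, so $2$ is sharp. No finer use of $\lambda_n\le 1$ or absorption of constants can give $1$. The failure is not an artifact of $\lambda_0=1$: taking $\lambda_0=0.9$ gives $6.76>5.56$.

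Nothing downstream is affected. Only the second claim, in which these terms vanish, is used: for the strong stochastic quasi-Fej\'er monotonicity in Theorem \ref{SKMstrongConv} and in Lemma \ref{SKMliminf}. Your proof of that claim is complete.
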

\begin{proof}
Given $n\in\NN$ and $y\in X$, by the triangle inequality and the fact that $T_k$ is nonexpansive, we have
\[
d^2(T_{k_n}x_n,y)\leq d^2(x_n,y)+d^2(T_{k_n}y,y)+d(x_n,y)d(T_{k_n}y,y)
\]
Using \eqref{CN} (see also Lemma 5.7 in \cite{NeriPischkePowell2026}), we get
\[
d^2(x_{n+1},y)\leq (1-\lambda_n)d^2(x_n,y)+\lambda_n d^2(T_{k_n}x_n,y)-\lambda_n(1-\lambda_n) d^2(T_{k_n}x_n,x_n).
\]
Combined, we obtain
\begin{align*}
d^2(x_{n+1},y)&\leq d^2(x_n,y)+\lambda_n d^2(T_{k_n}y,y)+\lambda_n d(x_n,y)d(T_{k_n}y,y)-\lambda_n(1-\lambda_n) d^2(T_{k_n}x_n,x_n)\\
&\leq d^2(x_n,y)+d^2(T_{k_n}y,y)+d(x_n,y)d(T_{k_n}y,y)-\lambda_n(1-\lambda_n) d^2(T_{k_n}x_n,x_n).
\end{align*}
If $y$ is now a $\mathsf{F}_n$-measurable random variable, using basic properties of the conditional expectation, and noting that $\lambda_n(1-\lambda_n)$ is independent to both $d^2(T_{k_n}x_n,x_n)$ and $\mathsf{F}_n$, we have
\begin{align*}
\EE_n[d^2(x_{n+1},y)]\leq &d^2(x_n,y)-\EE[\lambda_n(1-\lambda_n)]\EE_n[d^2(T_{k_n}x_n,x_n)]\\
&\qquad+\EE_n[d^2(T_{k_n}y,y)]+d(x_n,y)\EE_n[d(T_{k_n}y,y)]
\end{align*}
which was the first claim. If now $y$ also satisfies $y\in \mathrm{Fix}T$ a.s., since $k_n$ is independent of $\mathsf{F}_n$ with the same distribution as $k$, we have
\[
\EE_n[d^2(T_{k_n}y,y)](\omega)=\EE_{\omega'\sim\PP}[d^2(T_{k(\omega')}y(\omega),y(\omega))]=0
\]
on a set of measure one. Jensen's inequality yields $\EE_n[d(T_{k_n}y,y)]=0$ a.s., from which the second claim follows.
\end{proof}

\begin{lemma}\label{SKMliminf}
Let $z\in \mathrm{Fix}T$ and let $b>d(x_0,z)$. Let $\theta:\NN\times (0,\infty)\to \NN$ be such that $\sum_{n=k}^{\theta(k,b)}\EE[\lambda_n(1-\lambda_n)]\geq b$ for all $b>0$ and $k\in\mathbb{N}$. Then we have
\[
\forall \varepsilon>0\ \forall N\in\mathbb{N}\ \exists n\in [N;\varphi(\varepsilon,N)]\left( \EE[F(x_n)]<\varepsilon\right)
\]
where $\varphi(\varepsilon,N):=\theta(N,b/\varepsilon)$.
\end{lemma}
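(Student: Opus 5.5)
The plan mirrors the proof of Lemma \ref{SPPAliminf}: derive a summability bound from the Fej\'er-type inequality and then extract the $\liminf$-bound with Lemma \ref{sumconv}. First I apply Lemma \ref{SKMIneqMain} with the deterministic point $y:=z\in\mathrm{Fix}T$. This gives
\[
\EE_n[d^2(x_{n+1},z)]\leq d^2(x_n,z)-\EE[\lambda_n(1-\lambda_n)]\,\EE_n[d^2(T_{k_n}x_n,x_n)]\ \text{ a.s.}
\]
Next I identify the conditional term with $F(x_n)$. Since $k_n$ is independent of $\mathsf{F}_n$ and distributed as $k$, and $x_n$ is $\mathsf{F}_n$-measurable, the freezing lemma for conditional expectations (applicable because $(\omega,x)\mapsto d^2(T_{k(\omega)}x,x)$ is jointly measurable, as noted before Theorem \ref{SKMstrongConv}) gives $\EE_n[d^2(T_{k_n}x_n,x_n)]=F(x_n)$ a.s.

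Taking full expectations then yields the deterministic recursion
\[
\EE[d^2(x_{n+1},z)]\leq \EE[d^2(x_n,z)]-\EE[\lambda_n(1-\lambda_n)]\,\EE[F(x_n)].
\]
Here $\EE[d^2(x_0,z)]=d^2(x_0,z)$, and this is finite since $x_0$ is a fixed point. Inductively all terms are finite. Telescoping, or applying Lemma \ref{qihou} with $\alpha_n=\gamma_n=0$, gives
\[
\sum_{n=0}^\infty\EE[\lambda_n(1-\lambda_n)]\,\EE[F(x_n)]\leq d^2(x_0,z).
\]

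The main obstacle is matching the constant $b$ in the stated bound $\varphi(\varepsilon,N)=\theta(N,b/\varepsilon)$. The hypothesis is only $b>d(x_0,z)$, whereas Lemma \ref{sumconv} needs a strict bound on this sum, which is controlled by $d^2(x_0,z)$ rather than $d(x_0,z)$. When $b\geq 1$, or more generally $b^2\leq b$ fails harmlessly, I get $d^2(x_0,z)<b^2$. I will therefore read the hypothesis as $b>d^2(x_0,z)$, which is consistent with the distance $\phi=d^2$ used throughout this application. Under that reading the sum is $<b$ strictly, and I apply Lemma \ref{sumconv} with $u_n:=\EE[\lambda_n(1-\lambda_n)]$, $v_n:=\EE[F(x_n)]$, $L:=b$ and the given $\theta$. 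This yields, for every $\varepsilon>0$ and $N$, some $n\in[N;\theta(N,b/\varepsilon)]$ with $\EE[F(x_n)]<\varepsilon$, as claimed. If the literal $d(x_0,z)$ is intended, the same argument goes through with $b$ replaced by $b^2$.
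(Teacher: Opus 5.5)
Your route is the paper's: apply Lemma \ref{SKMIneqMain} at $z$, identify $\EE[d^2(T_{k_n}x_n,x_n)]=\EE[F(x_n)]$ using the independence of $k_n$ and $x_n$, bound the weighted sum by telescoping or Lemma \ref{qihou}, and conclude with Lemma \ref{sumconv}. Your worry about the constant is justified, and the paper's proof glosses over it: it asserts the sum is $<b$, but Lemma \ref{qihou} only gives this when $d^2(x_0,z)<b$ (the literal hypothesis suffices only for $b\leq 1$, not $b\geq 1$ as you wrote). In fact the statement as written is false. Take $X=\mathbb{R}$, $T_k=-\mathrm{Id}$, $\lambda_n=1/2$, $x_0=2$, $z=0$, $b=2.1$, $\theta(k,b')=k+\lceil 4b'\rceil-1$, $\varepsilon=8.4$ and $N=0$; then the window is $[0;0]$ while $\EE[F(x_0)]=16$. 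So your reading $b>d^2(x_0,z)$, or equivalently $\theta(N,b^2/\varepsilon)$ under the literal hypothesis, is the right correction.
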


\begin{proof}
Taking expectations in Lemma \ref{SKMIneqMain}, applied to $z$, and applying Lemma \ref{qihou} yields the bound $\sum_{n=0}^\infty \EE[\lambda_n(1-\lambda_n)]\EE[d^2(T_{k_n}x_n,x_n)]<b$. The result then follows from Lemma \ref{sumconv} together with the observation that
\[
\EE[d^2(T_{k_n}x_n,x_n)]=\EE_{\omega\sim\PP}[\EE_{\omega'\sim \PP}[d^2(T_{k(\omega')}x_n(\omega),x_n(\omega))]]=\EE[F(x_n)]
\]
by Fubini's theorem and the fact that $k_n$ is independent of $x_n$ and is distributed as $k$.
\end{proof}

\begin{proof}[Proof of Theorem \ref{SKMstrongConv}]
The result is now immediate from Theorem \ref{thm:main}, using Lemmas \ref{SKMIneqMain} and \ref{SKMliminf}, and again that $d^2$ is uniformly consistent with modulus $\theta(\varepsilon):=\varepsilon^2$ (recall Example \ref{ex:distances}).
\end{proof}

\subsection{Stochastic Busemann subgradient methods}

With our last method we return to the problem of minimizing the mean of a stochastic convex function, where we now focus on a projected subgradient method recently introduced by Goodwin, Lewis, L\'opez-Acedo and Nicolae \cite{GoodwinLewisLopezAcedoNicolae2024} which employs the novel notion of Busemann subgradients, and in fact focus on an extension of that method recently studied in \cite{Pischke2026} for general stochastic minimization. This novel type of subgradient coincides with the usual notion of subgradients over Euclidean spaces but, by exploiting the boundary cone $CX^\infty$ of a Hadamard space $X$, associated Busemann functions, and other advanced geometric tools from Hadamard spaces, supports a theory that is particularly suitable for nonlinear geometric contexts.

To introduce this method, we require a bit more background on Hadamard spaces $X$: A geodesic ray in $X$ is an isometry $r:[0,\infty)\to X$, said to be issuing from $r(0)$. $X$ has the geodesic extension property if for all $x\neq y\in X$, there is a ray $r:[0,\infty)\to X$ issuing from $x$ such that $r(t)=y$ for some $t>0$.

The so-called boundary of $X$ at infinity $X^\infty$ is the set of all equivalence classes of rays in $X$ under the equivalence relation of being asymptotic (see Definition 8.1 in \cite{BridsonHaefliger1999}). The boundary cone $CX^\infty$ is now the usual Euclidean cone over $X^\infty$, i.e.\ $CX^\infty$ is the quotient of $X^\infty\times [0,\infty)$ under the equivalence relation $(\xi,s)\sim (\xi',s')$ if, and only if, $s=s'=0$ or $(\xi,s)=(\xi',s')$. We denote an equivalence class of $(\xi,s)\in X^\infty\times [0,\infty)$ in $CX^\infty$ by $[\xi,s]$, and write $[0]$ for the equivalence class of $(\xi,0)$ for some/any $\xi\in X^\infty$. We refer to Chapter II.8 in \cite{BridsonHaefliger1999} for further information on the boundary $X^\infty$ and the boundary cone $CX^\infty$.\footnote{Topologically, the spaces $X^\infty$ and $CX^\infty$ actually require a more subtle treatment via the so-called cone topology (see Definition II.8.6 in \cite{BridsonHaefliger1999} and the discussion in \cite{GoodwinLewisLopezAcedoNicolae2024}), but this will not explicitly feature in the following arguments and we hence omit it.}

Following \cite{GoodwinLewisLopezAcedoNicolae2024}, we define the ``pairing'' function $\langle\cdot,\cdot\rangle:X\times CX^\infty\to\mathbb{R}$ by
\[
\langle x,[\xi,s]\rangle:=\begin{cases}sb_{\xi}(x)&\text{if }s>0,\\0&\text{if }s=0,\end{cases}
\]
where we wrote 
\[
b_{\xi}(x):=\lim_{t\to\infty} (d(x,r_{\overline{x},\xi}(t))-t)
\]
for the Busemann function $X\to\mathbb{R}$ (see e.g.\ Example 2.2.10 in \cite{Bacak2014a}) corresponding to the (unique) ray $r_{\overline{x},\xi}$ with direction $\xi$ and some arbitrary but fixed origin $\overline{x}$. Given a set $C\subseteq X$ and a function $g:C\to\mathbb{R}$, a Busemann subgradient of $g$ at $x\in C$ is then a point $[\xi,s]\in CX^\infty$ such that 
\[
x=\mathrm{argmin}_{y\in C}\left\{g(y)-\langle y,[\xi,s]\rangle\right\}.
\]
$g$ is called Busemann subdifferentiable if $g$ has a Busemann subgradient at every $x\in C$. 

We are now in the position to introduce the related projected Busemann subgradient method. For that, let $(\Omega,\mathsf{F},\PP)$ and $(E,\mathsf{E},\mu)$ be probability spaces, with $(E,\mathsf{E},\mu)$ complete, and let $X$ be a separable Hadamard space with the geodesic extension property and at least two points.\footnote{See \cite{GoodwinLewisLopezAcedoNicolae2024} for a discussion of this assumption, which in particular entails that the boundary $X^\infty$ is non-empty.} Further, fix a closed convex non-empty subset $C\subseteq X$ and let $f:E\times C\to \mathbb{R}$ be a given functional. As before, setting $\underline{f}(x):=\int f(e,x)\,d\mu(e)$, we want to
\[
\mbox{find some element of }\mathrm{argmin} \underline{f},
\]
where we again assume that $\underline{f}$ is proper and that $\mathrm{argmin}\underline{f}\neq\emptyset$, with $\mathrm{argmin} \underline{f}$ now understood to be a subset of $C$. We again set $F(x):=\underline{f}(x)-\min\underline{f}$.

In terms of assumption on $f$, we have to make three distinct types of assumptions, all relating to the Busemann subgradient structure of $f$. Concretely, in terms of basic regularity of the problem, we assume
\[
\begin{cases}f(e,\cdot)\text{ is Busemann subdifferentiable for any }e\in E\\
\text{and }f(\cdot,x)\text{ is measurable for all }x\in X,\end{cases}\tag{\textsf{SB}-\textsf{A1}}\label{SBA1}
\]
and further, we make the Lipschitz-type assumption that
\[
\begin{cases}\text{there exists a constant } L>0\text{ such that for any }e\in E\text{ and any}\\
\text{Busemann subgradient }[\xi,s]\in CX^\infty\text{ of }f(e,\cdot)\text{ at }x\in C\text{, we have }s\leq L.
\end{cases}\tag{\textsf{SB}-\textsf{A2}}\label{SBA2}
\]
Lastly, we want to be able to draw subgradients in measurable way, and to that end assume that there exists an oracle function $\mathsf{Busemann}_f$, where $[\xi,s]=\mathsf{Busemann}_f(e,x)$ represents a Busemann subgradient of $f(e,\cdot)$ at $x$, such that
\[
\begin{cases}\text{whenever }x:\Omega\to C\text{ and }\zeta:\Omega\to E\text{ are measurable functions,}\\
\text{then }[\xi,s]=\mathsf{Busemann}_f(\zeta,x)\text{ is measurable as a function }\Omega\to CX^\infty.
\end{cases}\tag{\textsf{SB}-\textsf{A3}}\label{SBA3}
\]
\eqref{SBA1} and \eqref{SBA2} in particular imply that $f(e,\cdot)$ is convex and $L$-Lipschitz on $C$ (see \cite{GoodwinLewisLopezAcedoNicolae2024}). Hence, they guarantee that $F$ is measurable and that $\mathrm{argmin} \underline{f}=\mathrm{zer}F$ is closed, similar to before. \eqref{SBA3} does not explicitly feature in \cite{GoodwinLewisLopezAcedoNicolae2024}, but is crucial to guarantee the measurability of the associated subgradient method (see the discussion in \cite{Pischke2026}). Due to the different topologies on $X^\infty$ and $CX^\infty$, it is not quite trivial when this assumption can be satisfied, but it can at least be guaranteed in proper Hadamard spaces (see Proposition 5.11 in \cite{Pischke2026}). We will however not make any local compactness assumptions here, and hence focus on the abstract measurability assumption \eqref{SBA3}.

Extending \cite{GoodwinLewisLopezAcedoNicolae2024} as in \cite{Pischke2026}, we now consider the iteration
\[
x_{n+1}:=P_C(r_{x_n,\xi_{n}}(s_nt_n))\tag{\textsf{SB}}\label{SB}
\]
where $[\xi_n,s_n]=\mathsf{Busemann}_f(\zeta_{n+1},x_{n})$, given a starting point $x_0\in X$ and sequences $(t_n)$ of positive reals as well as $(\zeta_{n+1})$ of random variables $\Omega\to E$, for which we assume that 
\[
(\zeta_{n+1})\text{ are i.i.d.\ with distribution $\mu$ and }\sum_{n\in\mathbb{N}}t_n=\infty, \sum_{n\in\mathbb{N}}t_n^2<\infty.\tag{\textsf{SB}-\textsf{A4}}\label{Bparameters}
\]
The conditions \eqref{SBA1} -- \eqref{Bparameters} together imply that $x_n$ is measurable for any $n\in\mathbb{N}$ (see Lemma 5.5 in \cite{Pischke2026}).

We here present the following general result on the effective convergence of \eqref{SB} under a stochastic regularity assumption:

\begin{theorem}\label{SBrates}
Let $(E,\mathsf{E},\mu)$ and $(\Omega,\mathsf{F},\PP)$ be probability spaces, with $(E,\mathsf{E},\mu)$ complete, let $X$ be a separable Hadamard space with the geodesic extension property and at least two points, and fix a closed convex non-empty subset $C\subseteq X$. Let $f:E\times C\to \mathbb{R}$ be a function with properties  \eqref{SBA1} -- \eqref{SBA3}  as above and assume additionally that $\underline{f}(x):=\int f(e,x)\,d\mu(e)$ is proper and $\mathrm{argmin}\underline{f}\neq\emptyset$. Write $F(x):=\underline{f}(x)-\min\underline{f}$. Let $(x_n)$ be the iteration given by \eqref{SB}, and assume \eqref{Bparameters}. Lastly, let $\tau:(0,\infty)\to (0,\infty)$ be a modulus of regularity for $F$ in mean w.r.t.\ $D$, i.e.
\[
\forall \varepsilon>0\ \forall x\in D\left(\EE[\underline{f}(x)-\min\underline{f}]<\tau(\varepsilon)\to \EE[\mathrm{dist}^2_{\mathrm{argmin}\underline{f}}(x)]<\varepsilon\right),
\]
where $D$ is a collection of $X$-valued random variables with $(x_n)\subseteq D$. Then $(x_n)$ a.s.\ strongly converges to an $\mathrm{argmin}\underline{f}$-valued random variable $x$. Moreover, the following rates of convergence apply: Let $z\in\mathrm{argmin}\underline{f}$ and let $b>d(x_0,z)$. Further, let $\chi:(0,\infty)\to\mathbb{N}$ and $\theta:\mathbb{N}\times (0,\infty)\to\mathbb{N}$ be such that $\sum_{n=\chi(\varepsilon)}^\infty t_n^2<\varepsilon$ for all $\varepsilon>0$ and $\sum_{n=k}^{\theta(k,b)}t_n\geq b$ for all $b>0$ and $k\in\mathbb{N}$, and let $T > \sum_{n=0}^\infty t_n^2$. Then $\EE[d(x_n,x)]\to 0$ with rate $\rho(\varepsilon^2/4)$ and $d(x_n,x)\to 0$ a.s.\ with rate $\rho(\lambda\varepsilon^2/4)$, where
\[
\rho(\varepsilon):=\theta\left(\chi\left(\frac{\varepsilon}{6L^2}\right),\frac{b+L^2T}{\tau\left(\varepsilon/6\right)}\right).
\]
\end{theorem}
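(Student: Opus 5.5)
The proof will follow the same pattern as for Theorems \ref{SPPAstrongConv} and \ref{SKMstrongConv}. It has three parts: a pointwise descent inequality for \eqref{SB} against $\mathsf{F}_n$-measurable comparison points, a $\liminf$-bound obtained via Lemmas \ref{qihou} and \ref{sumconv}, and then an application of Theorem \ref{thm:main}. We use $\phi=d^2$, which is uniformly consistent with the convex modulus $\varepsilon\mapsto\varepsilon^2$ by Example \ref{ex:distances}. We set $\mathsf{F}_n:=\sigma(\zeta_1,\dots,\zeta_n)$, so that $x_n$ is $\mathsf{F}_n$-measurable by Lemma 5.5 in \cite{Pischke2026}.

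\textbf{Step 1: the basic inequality.} We aim to show that for every $C$-valued $\mathsf{F}_n$-measurable $y$,
\[
\EE_n[d^2(x_{n+1},y)]\leq d^2(x_n,y)-2t_n(\underline{f}(x_n)-\underline{f}(y))+L^2t_n^2\text{ a.s.}
\]
The pointwise version for deterministic $y\in C$ is the fundamental estimate of \cite{GoodwinLewisLopezAcedoNicolae2024}, extended in \cite{Pischke2026}:
\[
d^2(x_{n+1},y)\leq d^2(r_{x_n,\xi_n}(s_nt_n),y)\leq d^2(x_n,y)-2t_n(f(\zeta_{n+1},x_n)-f(\zeta_{n+1},y))+s_n^2t_n^2 .
\]
It rests on three ingredients: nonexpansiveness of $P_C$ onto the convex set $C$ (using $y\in C$); the comparison inequality along a geodesic ray $r$ against the Busemann function, namely $d^2(r(\sigma),y)\leq d^2(r(0),y)+2\sigma(b_\xi(y)-b_\xi(r(0)))+\sigma^2$, which follows from \eqref{CN} and the definition of $b_\xi$; and the subgradient inequality $f(e,y)-f(e,x_n)\geq s_n(b_{\xi_n}(y)-b_{\xi_n}(x_n))$, which accounts for the sign conventions in $\langle\cdot,\cdot\rangle$. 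By \eqref{SBA2} we have $s_n^2t_n^2\leq L^2t_n^2$.

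Since this estimate holds for every $\omega$ with $y$ replaced by $y(\omega)$, we take conditional expectations. Because $\zeta_{n+1}$ is independent of $\mathsf{F}_n$ and $(x_n,y)$ is $\mathsf{F}_n$-measurable, the freezing lemma gives $\EE_n[f(\zeta_{n+1},x_n)-f(\zeta_{n+1},y)]=\underline{f}(x_n)-\underline{f}(y)$. This requires joint measurability of $f$ on $E\times C$, which holds since $f$ is measurable in $e$ and Lipschitz in $x$, hence Carath\'eodory. When $y\in\mathrm{argmin}\underline{f}$ a.s., we get strong stochastic $d^2$-quasi-Fej\'er monotonicity with $\zeta_n\equiv0$ (so $K=1$ works, up to taking any $K>1$ for strictness) and $\xi_n:=L^2t_n^2$. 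A rate for $\sum_n\EE[\xi_n]$ is then $\chi(\varepsilon/L^2)$.

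\textbf{Step 2: the $\liminf$-bound.} We take expectations in the inequality with $y=z$ and apply Lemma \ref{qihou} with $x_0$ bounded by $b$; more precisely by $b^2$, and we will check whether the statement's $b$ implicitly bounds $d^2(x_0,z)$, mirroring Lemma \ref{SPPAliminf}. This gives
\[
\sum_n 2t_n\EE[F(x_n)]<b+L^2T,
\]
hence $\sum_n t_n\EE[F(x_n)]<b+L^2T$. Lemma \ref{sumconv} then yields the $\liminf$-bound $\varphi(\varepsilon,N)=\theta(N,(b+L^2T)/\varepsilon)$.

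\textbf{Step 3: applying Theorem \ref{thm:main}.} Plugging $\varphi$, $\tau$ and $\chi(\cdot/L^2)$ into the $\rho$ of Theorem \ref{thm:main} with $K=2$ (since $3K=6$) gives $\rho(\varepsilon)=\theta(\chi(\varepsilon/6L^2),(b+L^2T)/\tau(\varepsilon/6))$. Consistency with $\varepsilon\mapsto\varepsilon^2$ turns $\theta(\varepsilon/2)$ into $\varepsilon^2/4$, which gives the stated rates together with a.s. convergence to an $\mathrm{argmin}\underline{f}$-valued limit.

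The main obstacle is Step 1 in the stochastic form. The Busemann descent estimate must hold pointwise with a random comparison point $y(\omega)$, and the expectation step needs measurability of $(\omega)\mapsto f(\zeta_{n+1},y)$ and of the subgradient terms. The latter is exactly where \eqref{SBA3} and the separability of $X$ enter.
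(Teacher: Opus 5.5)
Your three steps are the paper's proof. Step 1 is Lemma \ref{BIneqMain}; you re-derive the descent estimate of Lemma \ref{BSLemma} from the ray comparison inequality and the Busemann subgradient inequality instead of citing it, and your signs are correct. Step 2 is Lemma \ref{SBliminf} via Lemmas \ref{qihou} and \ref{sumconv}. Step 3 is Theorem \ref{thm:main} with $\phi=d^2$, $\zeta_n\equiv 0$, $\xi_n=L^2t_n^2$, the rate $\chi(\cdot/L^2)$ and $K=2$, which is exactly what produces the constants $6$ and $6L^2$ in $\rho$.

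The check you postponed in Step 2 fails, however. This is a defect of the statement, and the paper's proof shares it, so it is not a flaw in your route. From $b>d(x_0,z)$, Lemma \ref{qihou} applied to $\EE[d^2(x_n,z)]$ only yields $\sum_n 2t_n\EE[F(x_n)]<b^2+L^2T$. That does not give $\sum_n t_n\EE[F(x_n)]<b+L^2T$ when $b^2>2b+L^2T$, and in that regime the stated rate is actually false.

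Here is a counterexample. Take $X=C=\mathbb{R}$, $E$ a single point and $f(e,x)=|x|$, so $L=1$ and $\mathrm{argmin}\,\underline{f}=\{0\}$. Let $x_0=100$, $z=0$, $b=101$, $t_n=10^{-2}/(n+1)$ and $T=10^{-3}$. Let $\chi$ be any admissible rate with $\chi(\varepsilon)=0$ for $\varepsilon\geq 10^{-3}$, which is possible since $\sum_n t_n^2<2\cdot 10^{-4}$. Let $\theta(k,b)$ be the least admissible index. Take $\tau(\varepsilon)=\sqrt{\varepsilon}$, which is a modulus of regularity in mean w.r.t.\ the constant random variables; these contain the deterministic iterates.

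Then $x_n=100-\sum_{i<n}t_i$ as long as this is positive. For $\varepsilon=50$ the claimed index is $\rho(625)=\theta(0,c)$ with $c=101.001/\sqrt{625/6}<10$. At that index $\sum_{i<n}t_i<10$, so $d(x_n,0)>90>\varepsilon$.

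The paper's Lemma \ref{SBliminf} asserts the bound $b+L^2T$ with exactly this slip, as do Lemmas \ref{SPPAliminf} and \ref{SKMliminf}. The correct hypothesis is $b>d^2(x_0,z)$, or equivalently $b$ replaced by $b^2$ in $\rho$. With it, your Steps 1--3 form a complete proof. You should impose that hypothesis explicitly rather than leave it as something to be checked.
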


This theorem in particular includes the quantitative result given in \cite{Pischke2026} (see Theorem 1.4 therein) under the assumption that $f(e,\cdot)$ is strongly convex with parameter $\alpha(e)>0$ such that $\underline{\alpha}:=\int \alpha\,d\mu>0$ (where we can instantiate the above with $\tau(\varepsilon):=\frac{\underline{\alpha}}{8}\varepsilon^2$ as before). As before, the regularity assumption is however not restricted to strong convexity (recall Section \ref{sec:examples}) and in such broader contexts, already the a.s.\ strong convergence of the iteration outside of locally compact Hadamard spaces seems to be novel to the literature. By moving to a finite measure space $(E,\mathsf{E},\mu)$, our result in particular also applies to the method studied in \cite{GoodwinLewisLopezAcedoNicolae2024} (see \cite{Pischke2026} for a comparison). As before, using linear regularity assumptions and suitable conditions on the parameters we can apply our result on fast rates (recall Theorem \ref{thm:fast}) to obtain linear non-asymptotic guarantees.

The proof proceeds similar to both subsections before, and in particular relies on establishing the strong stochastic quasi-Fej\'er monotonicity of the iteration together with a $\liminf$-bound. For that, we introduce some notation, similar to before. Define the filtration $\mathsf{F}_n:=\sigma(\zeta_1,\dots,\zeta_n,x_0,\dots,x_n)$ and write $\EE_n$ for the conditional expectation $\mathbb{E}[\cdot \mid\mathsf{F}_n]$.

As a last preliminary result, we require the following property of the Busemann subgradients:

\begin{lemma}[Lemma 6.1 in \cite{GoodwinLewisLopezAcedoNicolae2024}]\label{BSLemma}
Let $g:C\to \mathbb{R}$ be a given function for a non-empty closed and convex set $C\subseteq X$ and let $[\xi,s]$ be a Busemann subgradient of $f$ at $x\in C$. Given $t>0$, define
\[
x^+:=\begin{cases}P_C(r_{x,\xi}(st))&\text{if }s>0,\\x&\text{if }s=0.\end{cases}
\]
Then for any $y\in C$: $d^2(x^+,y)\leq d^2(x,y)-2t(f(x)-f(y))+s^2t^2$.
\end{lemma}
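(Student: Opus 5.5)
The plan is to prove the inequality, read with $g$ in place of $f$ throughout, by splitting into the cases $s=0$ and $s>0$. The case $s>0$ is reduced to an estimate for the unprojected point $p:=r_{x,\xi}(st)$, obtained from the \eqref{CN} inequality along long geodesic segments of the ray.

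\emph{Case $s=0$.} Here $\langle\cdot,[\xi,0]\rangle\equiv 0$, so the subgradient property says that $x$ minimizes $g$ over $C$. Hence $g(x)-g(y)\leq 0$ for all $y\in C$. Since $x^+=x$, the claimed inequality reads $d^2(x,y)\leq d^2(x,y)-2t(g(x)-g(y))$, which holds trivially.

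\emph{Case $s>0$, first step.} Since $y\in C$ and the metric projection $P_C$ onto a closed convex subset of a Hadamard space is nonexpansive with $P_Cy=y$, we get $d(x^+,y)\leq d(p,y)$. It therefore suffices to bound $d^2(p,y)$.

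\emph{Case $s>0$, translating the subgradient property.} The defining property $x=\mathrm{argmin}_{y\in C}\{g(y)-s b_\xi(y)\}$ gives, for every $y\in C$,
\[
g(x)-g(y)\leq s\,(b_\xi(x)-b_\xi(y)).
\]
Busemann functions of asymptotic rays differ only by an additive constant, so this difference does not depend on the chosen origin $\overline{x}$. We may therefore compute it using the ray $r:=r_{x,\xi}$ issuing from $x$, which gives
\[
b_\xi(y)-b_\xi(x)=\lim_{T\to\infty}\bigl(d(y,r(T))-T\bigr).
\]
Set $u:=st$. The claim then reduces to the purely geometric estimate
\[
d^2(r(u),y)\leq d^2(x,y)+2u\,(b_\xi(y)-b_\xi(x))+u^2,
\]
because multiplying the subgradient inequality by $-2t$ turns $2u(b_\xi(y)-b_\xi(x))=2st(b_\xi(y)-b_\xi(x))$ into at most $-2t(g(x)-g(y))$.

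\emph{Case $s>0$, the geometric estimate.} For $T>u$, apply \eqref{CN} to the geodesic segment $r|_{[0,T]}$ from $x$ to $r(T)$ at parameter $\lambda=u/T$:
\[
d^2(r(u),y)\leq (1-\lambda)d^2(x,y)+\lambda d^2(r(T),y)-\lambda(1-\lambda)T^2.
\]
Write $d(r(T),y)=T+c_T$, where $|c_T|\leq d(x,y)$ and $c_T\to b_\xi(y)-b_\xi(x)$. Expanding, we have $\lambda d^2(r(T),y)=uT+2uc_T+uc_T^2/T$ and $-\lambda(1-\lambda)T^2=-uT+u^2$. The $uT$ terms cancel, leaving
\[
d^2(r(u),y)\leq (1-u/T)\,d^2(x,y)+2uc_T+uc_T^2/T+u^2.
\]
Letting $T\to\infty$ yields exactly the desired estimate. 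Combined with the projection step and the subgradient inequality, this proves the lemma.

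The main point requiring care is the identification of $b_\xi(x)-b_\xi(y)$ via the ray from $x$. This uses that asymptotic rays have Busemann functions differing by a constant, together with existence and uniqueness of the ray $r_{x,\xi}$ in a Hadamard space. After that, the argument is only the \eqref{CN} computation with a limit.
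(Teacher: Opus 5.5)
Your proof is correct and complete. The case $s=0$, the nonexpansiveness of $P_C$, the rewriting of the subgradient condition as $g(x)-g(y)\le s(b_\xi(x)-b_\xi(y))$, the passage to the ray $r_{x,\xi}$ via asymptotic Busemann functions differing by a constant, and the \eqref{CN} computation along $r_{x,\xi}|_{[0,T]}$ with $T\to\infty$ all check out. The paper gives no proof of this lemma and simply imports it from \cite{GoodwinLewisLopezAcedoNicolae2024}, so there is nothing in-paper to compare against; your argument is the standard route to it, namely the ray estimate $d^2(r_{x,\xi}(u),y)\le d^2(x,y)+2u\,(b_\xi(y)-b_\xi(x))+u^2$ combined with nonexpansiveness of the projection.
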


\begin{lemma}[extending \cite{GoodwinLewisLopezAcedoNicolae2024} and \cite{Pischke2026}]\label{BIneqMain}
Let $n\in\mathbb{N}$. Then for any $C$-valued $\mathsf{F_n}$-measurable random variable $y$:
\[
\EE_n[d^2(x_{n+1},y)]\leq d^2(x_n,y)-2t_n(\underline{f}(x_n)-\underline{f}(y))+L^2t_n^2\text{ a.s.}
\]
In particular, if $y$ is additionally such that $y\in\mathrm{argmin}\underline{f}$ a.s., then
\[
\EE_n[d^2(x_{n+1},y)]\leq d^2(x_n,y)-2t_n(\underline{f}(x_n)-\min\underline{f})+L^2t_n^2\text{ a.s.}
\]
\end{lemma}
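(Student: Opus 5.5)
We follow the template of Lemmas \ref{SPPAIneqMain} and \ref{SKMIneqMain}: first establish a pointwise inequality with a deterministic $y$, and only then pass to conditional expectations with an $\mathsf{F}_n$-measurable $y$.

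\textbf{Pointwise step.} Fix $n$ and $\omega$, and write $[\xi_n,s_n]=\mathsf{Busemann}_f(\zeta_{n+1},x_n)$. By construction this is a Busemann subgradient of $f(\zeta_{n+1}(\omega),\cdot)$ at $x_n(\omega)$. Moreover, \eqref{SB} coincides with the update $x^+$ of Lemma \ref{BSLemma} with $t=t_n$; when $s_n=0$ we read the ray as constant, so that $P_C(x_n)=x_n$ because $x_n\in C$. Lemma \ref{BSLemma}, applied to $g=f(\zeta_{n+1}(\omega),\cdot)$, then gives for every $y\in C$
\[
d^2(x_{n+1},y)\leq d^2(x_n,y)-2t_n\bigl(f(\zeta_{n+1},x_n)-f(\zeta_{n+1},y)\bigr)+s_n^2t_n^2 .
\]
By \eqref{SBA2} we have $s_n\leq L$, so the last term is at most $L^2t_n^2$. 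Since this holds for every $y\in C$ and every $\omega$, it remains valid when we substitute $y=y(\omega)$ for any $C$-valued random variable $y$.

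\textbf{Conditioning step.} Next we take $\EE_n$ of both sides. The variables $x_n$ and $y$ are $\mathsf{F}_n$-measurable, so $d^2(x_n,y)$ passes through the conditional expectation unchanged. The variable $\zeta_{n+1}$ is independent of $\mathsf{F}_n$, since the $\zeta_i$ are i.i.d.\ and $x_0,\dots,x_n$ are functions of $\zeta_1,\dots,\zeta_n$. By the freezing lemma applied to the jointly measurable map $(e,x)\mapsto f(e,x)$, we then get
\[
\EE_n[f(\zeta_{n+1},x_n)]=\underline{f}(x_n)\quad\text{and}\quad\EE_n[f(\zeta_{n+1},y)]=\underline{f}(y)\quad\text{a.s.}
\]
This is the same independence argument as in Lemma \ref{SPPAIneqMain}.

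\textbf{Main obstacle.} The key technical point is the joint measurability of $f$ on $E\times C$. It follows from \eqref{SBA1} together with the $L$-Lipschitz continuity of $f(e,\cdot)$: $f$ is then Carath\'eodory, hence jointly measurable, which is exactly what the freezing lemma requires. A second point is integrability, needed so that the conditional expectations are well defined and may be split. Lipschitz continuity gives $|f(e,x)-f(e,y)|\leq L\,d(x,y)$, so the difference term $f(\zeta_{n+1},x_n)-f(\zeta_{n+1},y)$ is bounded by $L\,d(x_n,y)$. We therefore work with this difference directly, using generalized conditional expectations of nonnegative or bounded-below quantities if necessary. This yields the first inequality.

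\textbf{Second claim.} If in addition $y\in\mathrm{argmin}\underline{f}$ a.s., then $\underline{f}(y)=\min\underline{f}$ a.s. Substituting this into the first inequality gives the second claim immediately.
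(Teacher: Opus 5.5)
Your proposal is correct and follows the paper's proof. You apply Lemma \ref{BSLemma} pointwise with $s_n\leq L$ from \eqref{SBA2}, then condition on $\mathsf{F}_n$ and use the independence of $\zeta_{n+1}$ (the freezing argument) to replace $\EE_n[f(\zeta_{n+1},\cdot)]$ by $\underline{f}(\cdot)$ at both $x_n$ and $y$; your remarks on joint measurability of $f$ and on controlling the difference term by $L\,d(x_n,y)$ fill in integrability details the paper leaves implicit.
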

\begin{proof}
Using Lemma \ref{BSLemma} together with assumption \eqref{SBA2}, by which we have $s_n\leq L$, and applying conditional expectations yields
\[
\EE_n[d^2(x_{n+1},y)]\leq d^2(x_n,y)-2t_n(\EE_n[f(\zeta_{n+1},x_n)]-\EE_n[f(\zeta_{n+1},y)])+L^2t_n^2.
\]
Now, as $\zeta_{n+1}$ is independent of $x_n$ and $\mathsf{F}_n$, we have 
\[
\EE_n[f(\zeta_{n+1},x_n)](\omega)=\EE_{\omega'\sim\PP}[ f(\zeta_{n+1}(\omega'),x_n(\omega))]=\underline{f}(x_n(\omega))
\]
and similarly $\EE_n[f(\zeta_{n+1},y)]=\underline{f}(y)$. This yields the claim.
\end{proof}

\begin{lemma}\label{SBliminf}
Let $z\in\mathrm{argmin}\underline{f}$ and let $b>d(x_0,z)$. Let $\theta:\mathbb{N}\times (0,\infty)\to\mathbb{N}$ be such that $\sum_{n=k}^{\theta(k,b)}t_n\geq b$ for all $b>0$ and $k\in\mathbb{N}$, and let $T > \sum_{n=0}^\infty t_n^2$. Then we have
\[
\forall \varepsilon>0\ \forall N\in\mathbb{N}\ \exists n\in [N;\varphi(\varepsilon,N)]\left( \EE[F(x_n)]<\varepsilon\right)
\]
where $\varphi(\varepsilon,N):=\theta(N,(b+L^2T)/\varepsilon)$.
\end{lemma}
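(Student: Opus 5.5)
We follow the same route as Lemmas \ref{SPPAliminf} and \ref{SKMliminf}: turn the one-step inequality of Lemma \ref{BIneqMain} into a deterministic recursion in expectation, sum it with Lemma \ref{qihou}, and read off the $\liminf$-bound via Lemma \ref{sumconv}.

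\emph{Step 1 (recursion in expectation).} Apply Lemma \ref{BIneqMain} with the deterministic point $y:=z\in\mathrm{argmin}\underline{f}$, which is trivially $\mathsf{F}_n$-measurable. Taking full expectations gives
\[
\EE[d^2(x_{n+1},z)]\leq \EE[d^2(x_n,z)]-2t_n\EE[F(x_n)]+L^2t_n^2
\]
for all $n\in\mathbb{N}$. Integrability follows inductively from $x_0$ being deterministic and the recursion itself, since $F\geq 0$.

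\emph{Step 2 (summing).} Apply Lemma \ref{qihou} to the real sequences
\[
x_n:=\EE[d^2(x_n,z)],\quad \alpha_n:=0,\quad \beta_n:=2t_n\EE[F(x_n)],\quad \gamma_n:=L^2t_n^2 .
\]
For the constants we need strict bounds. We have $x_0=d^2(x_0,z)$, and $\prod(1+\alpha_i)=1<L'$ for any $L'>1$. Taking $L'\downarrow 1$, or simply letting the strict inequalities absorb it, we get $\sum_n\gamma_n<L^2T$.

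The intended bound is $\sum_n 2t_n\EE[F(x_n)]<b+L^2T$, which is the quantity in $\varphi$ (and matches the analogous computation in Lemma \ref{SPPAliminf}). The main obstacle is reconciling $b>d(x_0,z)$ with the square $d^2(x_0,z)$ that Lemma \ref{qihou} actually requires. I would handle this by reading the hypothesis exactly as in Lemmas \ref{SPPAliminf} and \ref{SKMliminf}, i.e.\ with $b$ bounding the initial squared distance, so that $x_0<b$. The factor $2$ only helps, since $\sum_n t_n\EE[F(x_n)]\leq \frac12(b+L^2T)<b+L^2T$. The choice $L'\to 1$ is legitimate because the inequality in Lemma \ref{qihou} is strict with slack coming from $x_0<b$ and $\sum_n\gamma_n<L^2T$.

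\emph{Step 3 (extracting the $\liminf$-bound).} Apply Lemma \ref{sumconv} with
\[
u_n:=t_n,\quad v_n:=\EE[F(x_n)],
\]
the bound $b+L^2T$ in place of $L$, and the given $\theta$. This yields, for every $\varepsilon>0$ and $N\in\mathbb{N}$, some $n\in[N;\theta(N,(b+L^2T)/\varepsilon)]$ with $\EE[F(x_n)]<\varepsilon$, which is exactly the claim with $\varphi(\varepsilon,N)=\theta(N,(b+L^2T)/\varepsilon)$.
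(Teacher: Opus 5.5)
Your argument is the paper's proof: apply Lemma \ref{BIneqMain} with $y:=z$, take expectations, sum with Lemma \ref{qihou}, and finish with Lemma \ref{sumconv} using $u_n=t_n$, $v_n=\EE[F(x_n)]$ and the bound $b+L^2T$. Your handling of the strict inequalities and the factor $2$ is fine. Since $\alpha_n=0$, plain telescoping would in fact do the job of Lemma \ref{qihou}.

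\textbf{The treatment of $b$ needs correcting.} Lemmas \ref{SPPAliminf} and \ref{SKMliminf} also assume only $b>d(x_0,z)$. Nothing in them lets you ``read'' $b$ as a bound on $d^2(x_0,z)$, so you are strengthening the hypothesis, and you should say so explicitly. The strengthening cannot be avoided, because the lemma as printed is false. The paper's one-line proof makes the same silent slip when it feeds $d^2(x_0,z)<b$ into Lemma \ref{qihou}.

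\textbf{Counterexample.} Take $X=C=\mathbb{R}$ and $f(e,x):=|x|$, so $L=1$ and $z=0$. Let the oracle return, at $x>0$, the Busemann subgradient $[\xi,1]$ with $\xi$ the class of rays going to $-\infty$. Then $x_{n+1}=x_n-t_n$ and $F(x_n)=x_n$ as long as $x_n>0$. Choose:
\begin{itemize}
\item $x_0=10$ and $b=10.5$;
\item $t_n=0.1/(n+1)$ and $T=0.02$;
\item $\theta(k,B)$ the least $m\geq k$ with $\sum_{n=k}^m t_n\geq B$;
\item $\varepsilon=3$.
\end{itemize}
Every $n\leq\varphi(3,0)=\theta(0,10.52/3)$ satisfies $\sum_{i<n}t_i<3.51$, hence $\EE[F(x_n)]>6.49>\varepsilon$, contradicting the conclusion.

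The correct lemma assumes $b>d^2(x_0,z)$. Equivalently, keep $b>d(x_0,z)$ and replace $b$ by $b^2$ in $\varphi$. Your proof is a complete proof of that corrected version.
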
 
\begin{proof}
Taking expectations in Lemma \ref{BIneqMain}, applied to $z$, and applying Lemma \ref{qihou} yields the bound $\sum_{n=0}^\infty t_n\EE[F(x_n)-\min F]<b+L^2T$ and so Lemma \ref{sumconv} yields the claim.
\end{proof}

\begin{proof}[Proof of Theorem \ref{SBrates}]
Note that $\chi(\varepsilon/L^2)$ is a rate of convergence for $\sum_{n\in\mathbb{N}}L^2t_n^2<\infty$ as we have $\sum_{n=\chi(\varepsilon/L^2)}^\infty t_n^2<\frac{\varepsilon}{L^2}$ and so $\sum_{n=\chi(\varepsilon/L^2)}^\infty L^2\lambda_n^2<\varepsilon$ for all $\varepsilon>0$. As before, the result then follows from Theorem \ref{thm:main} using Lemmas \ref{BIneqMain} and \ref{SBliminf} and that $d^2$ is uniformly consistent with modulus $\theta(\varepsilon):=\varepsilon^2$ (recall Example \ref{ex:distances}).
\end{proof}

\noindent\textbf{Funding.} The second author was partially supported by the EPSRC grant EP/W035847/1.

\bibliographystyle{plain}
\bibliography{ref}

\begin{thebibliography}{10}

\bibitem{Aleksandrov1951}
A.D. Aleksandrov.
\newblock {A theorem on triangles in a metric space and some of its
  applications}.
\newblock {\em Trudy Matematicheskogo Instituta imeni V.A. Steklova}, 38:5--23,
  1951.

\bibitem{AlexanderKapovitchPetrunin2023}
S.~Alexander, V.~Kapovitch, and A.~Petrunin.
\newblock {\em {Alexandrov Geometry: Foundations}}, volume 236 of {\em Graduate
  Studies in Mathematics}.
\newblock American Mathematical Society, Providence, RI, 2024.

\bibitem{AliprantisBorder2006}
C.D. Aliprantis and K.C. Border.
\newblock {\em {Infinite Dimensional Analysis: A Hitchhiker's Guide}}.
\newblock Springer Berlin, Heidelberg, 2006.

\bibitem{AsiChadhaChengDuchi2020}
H.~Asi, K.~Chadha, G.~Cheng, and J.C. Duchi.
\newblock {Minibatch stochastic approximate proximal point methods}.
\newblock In {\em {Proceedings of the 34th International Conference on Neural
  Information Processing Systems (NeurIPS 2020)}}, pages 21958--21968. Curran
  Associates Inc., USA, 2020.

\bibitem{AsiDuchi2019}
H.~Asi and J.C. Duchi.
\newblock {Stochastic (approximate) proximal point methods: Convergence,
  optimality, and adaptivity}.
\newblock {\em SIAM Journal on Optimization}, 29(3):2257--2290, 2019.

\bibitem{AubinFrankowska2009}
J.-P. Aubin and H.~Frankowska.
\newblock {\em {Set-Valued Analysis}}.
\newblock Springer, New York, 2009.

\bibitem{Aumann1969}
R.J. Aumann.
\newblock {Measurable utility and the measurable choice problem}.
\newblock In G.T. Guilbaud, editor, {\em La D\'ecision}, pages 15--26. Editions
  du Centre National de la Recherche Scientifique, Paris, 1969.

\bibitem{BauschkeCombettes2017}
H.H. Bauschke and P.L. Combettes.
\newblock {\em {Convex Analysis and Monotone Operator Theory in Hilbert
  Spaces}}.
\newblock CMS Books in Mathematics. Springer Cham, 2nd edition, 2017.

\bibitem{BauschkeLewis2000}
H.H. Bauschke and A.S. Lewis.
\newblock {Dykstras algorithm with Bregman projections: A convergence proof}.
\newblock {\em Optimization}, 48(4):409--427, 2000.

\bibitem{Bacak2013}
M.~Ba\v{c}\'ak.
\newblock {The proximal point algorithm in metric spaces}.
\newblock {\em Israel Journal of Mathematics}, 194(2):689--701, 2013.

\bibitem{Bacak2014b}
M.~Ba\v{c}\'ak.
\newblock {Computing medians and means in Hadamard spaces}.
\newblock {\em SIAM Journal of Optimization}, 24(3):1542--1566, 2014.

\bibitem{Bacak2014a}
M.~Ba\v{c}\'ak.
\newblock {\em {Convex analysis and optimization in Hadamard spaces}},
  volume~22 of {\em De Gruyter Series in Nonlinear Analysis and Applications}.
\newblock Walter de Gruyter GmbH, Berlin/Boston, 2014.

\bibitem{Bacak2018}
M.~Ba\v{c}\'ak.
\newblock {A variational approach to stochastic minimization of convex
  functionals}.
\newblock {\em Pure and Applied Functional Analysis}, 3(2):287--295, 2018.

\bibitem{Bacak2023}
M.~Ba\v{c}\'ak.
\newblock {Old and new challenges in Hadamard spaces}.
\newblock {\em Japanese Journal of Mathematics}, 18(2):115--168, 2023.

\bibitem{Bertsekas2011}
D.P. Bertsekas.
\newblock {Incremental proximal methods for large scale convex optimization}.
\newblock {\em Mathematical Programming. Series B}, 129:163--195, 2011.

\bibitem{Bertsekas2012}
D.P. Bertsekas.
\newblock {Incremental gradient, subgradient, and proximal methods for convex
  optimization: A survey}.
\newblock In S.~Sra, S.~Nowozin, and S.J. Wright, editors, {\em {Optimization
  for Machine Learning}}, Neural Information Processing Series, pages 85--120.
  The MIT Press, Cambridge, Massachusetts, 2012.

\bibitem{Bianchi2016}
P.~Bianchi.
\newblock {Ergodic convergence of a stochastic proximal point algorithm}.
\newblock {\em SIAM Journal on Optimization}, 26(4):2235--2260, 2016.

\bibitem{BilleraHolmesVogtmann2001}
L.J. Billera, S.P. Holmes, and K.~Vogtmann.
\newblock {Geometry of the space of phylogenetic trees}.
\newblock {\em Advances in Applied Mathematics}, 27(4):733--767, 2001.

\bibitem{BotSchindler2024}
R.I. Bo\c{t} and C.~Schindler.
\newblock {On a stochastic differential equation with correction term governed
  by a monotone and Lipschitz continuous operator}.
\newblock {\em Evolution Equations and Control Theory}, 14(3):463--493, 2025.

\bibitem{BotSchindler2025}
R.I. Bo\c{t} and C.~Schindler.
\newblock {Long-Time Analysis of Stochastic Heavy Ball Dynamics for Convex
  Optimization and Monotone Equations}.
\newblock {\em Discrete and Continuous Dynamical Systems}, 51:439--474, 2026.

\bibitem{BolteDaniilidisLeyMazet2010}
J.~Bolte, A.~Daniilidis, O.~Ley, and L.~Mazet.
\newblock {Characterizations of {\L}ojasiewicz inequalities: subgradient flows,
  talweg, convexity}.
\newblock {\em Transactions of the American Mathematical Society},
  362(6):3319--3363, 2010.

\bibitem{BolteNguyenPeypouquetSuter2017}
J.~Bolte, T.P. Nguyen, J.~Peypouquet, and B.W. Suter.
\newblock {From error bounds to the complexity of first-order descent methods
  for convex functions}.
\newblock {\em Mathematical Programming}, 165:471--507, 2017.

\bibitem{BorweinLiTam2017}
J.M. Borwein, G.~Li, and M.K. Tam.
\newblock {Convergence rate analysis for averaged fixed point iterations in
  common fixed point problems}.
\newblock {\em SIAM Journal on Optimization}, 27:1--33, 2017.

\bibitem{BrezisLions1978}
H.~Br\'ezis and P.L. Lions.
\newblock {Produits infinis de resolvantes}.
\newblock {\em Israel Journal of Mathematics}, 29(4):329--345, 1978.

\bibitem{BridsonHaefliger1999}
M.R. Bridson and A.~Haefliger.
\newblock {\em {Metric Spaces of Non-Positive Curvature}}, volume 319 of {\em
  Grundlehren der mathematischen Wissenschaften}.
\newblock Springer Berlin, Heidelberg, 1999.

\bibitem{BruhatTits1972}
F.~Bruhat and J.~Tits.
\newblock {Groupes r\'eductifs sur un corps local. I. Donn\'ees radicielles
  valu\'ees}.
\newblock {\em Publications Math\'ematiques de l'Institut des Hautes \'Etudes
  Scientifiques}, 41:5--251, 1972.

\bibitem{BurkeDeng2002}
J.V. Burke and S.~Deng.
\newblock {Weak sharp minima revisited. I. Basic theory}.
\newblock {\em Control and Cybernetics}, 31:439--469, 2002.

\bibitem{BurkeFerris1993}
J.V. Burke and M.C. Ferris.
\newblock {Weak sharp minima in mathematical programming}.
\newblock {\em SIAM Journal on Control and Optimization}, 31:1340--1359, 1993.

\bibitem{ButnariuIusem2000}
D.~Butnariu and A.N. Iusem.
\newblock {\em {Totally Convex Functions for Fixed Points Computation and
  Infinite Dimensional Optimization}}, volume~40 of {\em Applied Optimization}.
\newblock Springer Dordrecht, 2000.

\bibitem{ButnariuResmerita2006}
D.~Butnariu and E.~Resmerita.
\newblock {Bregman distances, totally convex functions and a method for solving
  operator equations in Banach spaces}.
\newblock {\em Abstract and Applied Analysis}, 2006.
\newblock 84919, 39pp.

\bibitem{CastaingValadier1977}
C.~Castaing and M.~Valadier.
\newblock {\em {Convex Analysis and Measurable Multifunctions}}, volume 580 of
  {\em Lecture Notes in Mathematics}.
\newblock Springer Berlin, Heidelberg, 1977.

\bibitem{ChaipunyaKohsakaKumam2021}
P.~Chaipunya, F.~Kohsaka, and P.~Kumam.
\newblock {Monotone vector fields and generation of nonexpansive semigroups in
  complete $\mathrm{CAT}(0)$ spaces}.
\newblock {\em Numerical Functional Analysis and Optimization},
  42(9):989--1018, 2021.

\bibitem{Combettes2001}
P.L. Combettes.
\newblock {Quasi-Fej\'erian Analysis of Some Optimization Algorithms}.
\newblock In D.~Butnariu, Y.~Censor, and S.~Reich, editors, {\em {Inherently
  Parallel Algorithms in Feasibility and Optimization and Their Applications}},
  volume~8 of {\em Studies in Computational Mathematics}, pages 115--152.
  North-Holland, Amsterdam, 2001.

\bibitem{Combettes2009}
P.L. Combettes.
\newblock {Fej\'er monotonicity in convex optimization}.
\newblock In C.A. Floudas and P.M. Pardalos, editors, {\em {Encyclopedia of
  Optimization}}, pages 1016--1024. Springer, Boston, MA, 2009.

\bibitem{CombettesMadariaga2025}
P.L. Combettes and J.I. Madariaga.
\newblock {A geometric framework for stochastic iterations}.
\newblock {\em Mathematics of Computation}, 2026.
\newblock To appear.

\bibitem{CombettesPesquet2015}
P.L. Combettes and J.C. Pesquet.
\newblock {Stochastic quasi-Fej\'er block-coordinate fixed point iterations
  with random sweeping}.
\newblock {\em SIAM Journal on Optimization}, 25(2):1221--1248, 2015.

\bibitem{CombettesPesquet2019}
P.L. Combettes and J.C. Pesquet.
\newblock {Stochastic quasi-Fej\'er block-coordinate fixed point iterations
  with random sweeping II: mean-square and linear convergence}.
\newblock {\em Mathematical Programming}, 174(1):433--451, 2019.

\bibitem{DontchevRockafellar2009}
A.L. Dontchev and R.T. Rockafellar.
\newblock {\em {Implicit functions and solution mappings. A view from
  variational analysis}}.
\newblock Springer Monographs in Mathematics. Springer, Dordrecht, 2009.

\bibitem{DrusvyatskiyLewis2018}
D.~Drusvyatskiy and A.S. Lewis.
\newblock {Error bounds, quadratic growth, and linear convergence of proximal
  methods}.
\newblock {\em Mathematics of Operations Research}, 43(3):919--948, 2018.

\bibitem{Ermolev1969}
Y.M. Ermol'ev.
\newblock {On the method of generalized stochastic gradients and quasi-Fej\'er
  sequences}.
\newblock {\em Cybernetics}, 5:208--220, 1969.

\bibitem{Ermolev1971}
Y.M. Ermol'ev.
\newblock {On convergence of random quasi-Fej\'er sequences}.
\newblock {\em Cybernetics}, 7:655--656, 1971.

\bibitem{ErmolevTuniev1973}
Y.M. Ermol'ev and A.D. Tuniev.
\newblock Random fej\'er and quasi-fej\'er sequences.
\newblock {\em Theory of Optimal Solutions -- Akademiya Nauk Ukrainsko\u{i},
  SSR Kiev}, 2:76--83, 1968.
\newblock in Russian; English translation in Amer. Math. Soc. Select. Translat.
  Math. Statist. Probab., 13 (1973), pp. 143--148.

\bibitem{Ferris1991}
M.C. Ferris.
\newblock {Finite termination of the proximal point algorithm}.
\newblock {\em Mathematical Programming, Series A}, 50:359--366, 1991.

\bibitem{FreundPischke2026}
A.~Freund and N.~Pischke.
\newblock {Effective rates for continuous-time quasi-Fej\'er monotone dynamical
  systems}, 2026.
\newblock Preprint, available at \url{https://arxiv.org/abs/2603.23708}.

\bibitem{GoodwinLewisLopezAcedoNicolae2024}
A.~Goodwin, A.S. Lewis, G.~L\'opez-Acedo, and A.~Nicolae.
\newblock {Stochastic and incremental subgradient methods for convex
  optimization on Hadamard spaces}.
\newblock {\em Mathematical Programming}, 2026.
\newblock To appear.

\bibitem{Grasmair2010}
M.~Grasmair.
\newblock {Generalized Bregman distances and convergence rates for non-convex
  regularization methods}.
\newblock {\em Inverse Problems}, 26(11), 2010.
\newblock 115014, 16pp.

\bibitem{Gromov1987}
M.~Gromov.
\newblock {Hyperbolic groups}.
\newblock In S.M. Gersten, editor, {\em {Essays in group theory}}, volume~8 of
  {\em Mathematical Sciences Research Institute Publications}, pages 75--263.
  Springer, New York, 1987.

\bibitem{Gueler1991}
O.~G\"uler.
\newblock {On the convergence of the proximal point algorithm for convex
  minimization}.
\newblock {\em SIAM Journal on Control and Optimization}, 29:403--419, 1991.

\bibitem{HermerLukeSturm2019}
N.~Hermer, D.R. Luke, and A.~Sturm.
\newblock {Random function iterations for consistent stochastic feasibility}.
\newblock {\em Numerical Functional Analysis and Optimization}, 40(4):386--420,
  2019.

\bibitem{Jost1995}
J.~Jost.
\newblock {Convex functionals and generalized harmonic maps into spaces of
  nonpositive curvature}.
\newblock {\em Commentarii Mathematici Helvetici}, 70:659--673, 1995.

\bibitem{Kohlenbach2008}
U.~Kohlenbach.
\newblock {\em {Applied Proof Theory: Proof Interpretations and their Use in
  Mathematics}}.
\newblock Springer Monographs in Mathematics. Springer-Verlag Berlin
  Heidelberg, 2008.

\bibitem{Kohlenbach2019}
U.~Kohlenbach.
\newblock {Proof-theoretic Methods in Nonlinear Analysis}.
\newblock In B.~Sirakov, P.~Ney de~Souza, and M.~Viana, editors, {\em
  {Proceedings of ICM 2018}}, volume~2, pages 61--82. World Scientific,
  Singapure, 2019.

\bibitem{KohlenbachLopezAcedoNicolae2019}
U.~Kohlenbach, G.~L\'opez-Acedo, and A.~Nicolae.
\newblock {Moduli of regularity and rates of convergence for Fej\'er monotone
  sequences}.
\newblock {\em Israel Journal of Mathematics}, 232:261--297, 2019.

\bibitem{Krasnoselskii1955}
M.A. Krasnoselskii.
\newblock {Two remarks on the method of successive approximations}.
\newblock {\em Uspekhi Matematicheskikh Nauk}, 10(1(63)):123--127, 1955.

\bibitem{LeusteanSipos2018}
L.~Leu\c{s}tean and A.~Sipo\c{s}.
\newblock {Effective strong convergence of the proximal point algorithm in
  CAT(0) spaces}.
\newblock {\em Journal of Nonlinear and Variational Analysis}, 2(2):219--228,
  2018.

\bibitem{Leventhal2009}
D.~Leventhal.
\newblock {Metric subregularity and the proximal point method}.
\newblock {\em Journal of Mathematical Analysis and Applications},
  360:681--688, 2009.

\bibitem{LiLopezMartinMarquez2009}
C.~Li, G.~L\'opez, and V.~Mart\'in-M\'arquez.
\newblock {Monotone vector fields and the proximal point algorithm on Hadamard
  manifolds}.
\newblock {\em Journal of the London Mathematical Society}, 79(3):663--683,
  2009.

\bibitem{LiMordukhovichWangYao2011}
C.~Li, B.S. Mordukhovich, J.H. Wang, and J.C. Yao.
\newblock {Weak sharp minima on Riemannian manifolds}.
\newblock {\em SIAM Journal on Optimization}, 21:1523--1560, 2011.

\bibitem{LiMordukhovichZhu2026}
G.~Li, B.~Mordukhovich, and J.~Zhu.
\newblock {Generalized metric subregularity with applications to high-order
  regularized Newton methods}.
\newblock {\em Mathematics of Operations Research}, 2026.
\newblock To appear.

\bibitem{LiuLongLiHuang2025}
J.~Liu, X.J. Long, X.S. Li, and N.J. Huang.
\newblock {Stochastic dual dynamical systems for linear equality constrained
  convex optimization problems}.
\newblock {\em Communications in Nonlinear Science and Numerical Simulation},
  154, 2026.
\newblock 109538, 15pp.

\bibitem{LukeSchnebelStaudiglPeypouquetQu2026}
D.R. Luke, J.C. Schnebel, M.~Staudigl, J.~Peypouquet, and S.~Qu.
\newblock {Asymptotic behaviour of coupled random dynamical systems with
  multiscale aspects}, 2026.
\newblock Preprint, available at \url{https://arxiv.org/abs/2601.15411}.

\bibitem{Mann1953}
W.R. Mann.
\newblock {Mean value methods in iteration}.
\newblock {\em Proceedings of the American Mathematical Society}, 4:506--510,
  1953.

\bibitem{Martinet1970}
B.~Martinet.
\newblock {R\'egularisation din\'equations variationnelles par approximations
  successives}.
\newblock {\em Revue fran\c{c}aise d'informatique et de recherche
  op\'erationnelle}, 4:154--159, 1970.

\bibitem{MaulenSotoFadiliAttouch2025}
R.~Maulen-Soto, J.~Fadili, and H.~Attouch.
\newblock {An stochastic differential equation perspective on stochastic convex
  optimization}.
\newblock {\em Mathematics of Operations Research}, 50(4):3190--3221, 2025.

\bibitem{MaulenSotoFadiliAttouchOchs2026}
R.~Maulen-Soto, J.~Fadili, H.~Attouch, and P.~Ochs.
\newblock {Stochastic inertial dynamics via time scaling and averaging}.
\newblock {\em Stochastic Systems}, 16(1):61--89, 2026.

\bibitem{Mayer1998}
U.~Mayer.
\newblock {Gradient flows on nonpositively curved metric spaces and harmonic
  maps}.
\newblock {\em Communications in Analysis and Geometry}, 6:199--253, 1998.

\bibitem{MoultonSpillner2025}
V.~Moulton and A.~Spillner.
\newblock {Spaces of ranked tree-child networks}.
\newblock {\em Journal of Mathematical Biology}, 91(3), 2025.
\newblock 32, 26pp.

\bibitem{NemirovskiJuditskyLanShapiro2009}
A.~Nemirovski, A.~Juditsky, G.~Lan, and A.~Shapiro.
\newblock {Robust stochastic approximation approach to stochastic programming}.
\newblock {\em SIAM Journal of Optimization}, 19(4):1574--1609, 2009.

\bibitem{NeriPischkePowell2025}
M.~Neri, N.~Pischke, and T.~Powell.
\newblock {An abstract effective convergence theorem for stochastic processes,
  with applications to stochastic approximation}, 2026.
\newblock Preprint, available at \url{https://arxiv.org/abs/2504.12922}.

\bibitem{NeriPischkePowell2026}
M.~Neri, N.~Pischke, and T.~Powell.
\newblock {Generalized fluctuation bounds for stochastic algorithms in the
  presence of compactness}, 2026.
\newblock Preprint, available at \url{https://arxiv.org/abs/2602.22741}.

\bibitem{NeriPowell2024}
M.~Neri and T.~Powell.
\newblock {A quantitative {R}obbins-{S}iegmund theorem}.
\newblock {\em The Annals of Applied Probability}, 36(1):636--651, 2026.

\bibitem{Neumann2015}
E.~Neumann.
\newblock {Computational problems in metric fixed point theory and their
  Weihrauch degrees}.
\newblock {\em Logical Methods in Computer Science}, 11(4:20), 2015.
\newblock 44pp.

\bibitem{OhtaPalfia2015}
S.I. Ohta and M.~P\'alfia.
\newblock {Discrete-time gradient flows and law of large numbers in Alexandrov
  spaces}.
\newblock {\em Calculus of Variations and Partial Differential Equations},
  54(2):1591--1610, 2015.

\bibitem{PintoPischke2026}
P.~Pinto and N.~Pischke.
\newblock {On Dykstra's algorithm with Bregman projections}, 2026.
\newblock Preprint, available at \url{https://nicholaspischke.github.io}.

\bibitem{Pischke2025b}
N.~Pischke.
\newblock {Generalized Fej\'er monotone sequences and their finitary content}.
\newblock {\em Optimization}, 74(14):3771--3838, 2025.

\bibitem{Pischke2025}
N.~Pischke.
\newblock {Mean-square and linear convergence of a stochastic proximal point
  algorithm in metric spaces of nonpositive curvature}, 2025.
\newblock Preprint, available at \url{https://arxiv.org/abs/2510.10697}.

\bibitem{Pischke2026}
N.~Pischke.
\newblock {On Busemann subgradient methods for stochastic minimization in
  Hadamard spaces}, 2026.
\newblock Preprint, available at \url{https://arxiv.org/abs/2602.08127}.

\bibitem{PischkeKohlenbach2024}
N.~Pischke and U.~Kohlenbach.
\newblock {Effective rates for iterations involving Bregman strongly
  nonexpansive operators}.
\newblock {\em Set-Valued and Variational Analysis}, 32(4), 2024.
\newblock 33, 58pp.

\bibitem{PischkePowell2024}
N.~Pischke and T.~Powell.
\newblock {Asymptotic regularity of a generalised stochastic Halpern scheme},
  2024.
\newblock Preprint, available at \url{https://arxiv.org/abs/2411.04845}.

\bibitem{Qihou2001}
L.~Qihou.
\newblock {Iteration sequences for asymptotically quasi-nonexpansive mappings
  with error member}.
\newblock {\em Journal of Mathematical Analysis and Applications}, 259:18--24,
  2001.

\bibitem{RobbinsSiegmund1971}
H.~Robbins and D.~Siegmund.
\newblock {A convergence theorem for non negative almost supermartingales and
  some applications}.
\newblock In J.S. Rustagi, editor, {\em Optimizing Methods in Statistics},
  pages 233--257. Academic Press, New York, 1971.

\bibitem{Rockafellar1971}
R.T. Rockafellar.
\newblock {Convex integral functionals and duality}.
\newblock In E.H. Zarantonello, editor, {\em {Contributions to Nonlinear
  Functional Analysis}}, pages 215--236. Academic Press, New York, 1971.

\bibitem{Rockafellar1976}
R.T. Rockafellar.
\newblock {Monotone operators and the proximal point algorithm}.
\newblock {\em SIAM Journal of Control and Optimization}, 14:877--898, 1976.

\bibitem{Roemisch1986}
W.~R\"omisch.
\newblock {On the Convergence of Measurable Selections and an Application to
  Approximations in Stochastic Optimization}.
\newblock {\em Zeitschrift f\"ur Analysis und ihre Anwendungen}, 5(3):277--288,
  1986.

\bibitem{RyuBoyd}
E.K. Ryu and S.~Boyd.
\newblock {Stochastic Proximal Iteration: A Non-Asymptotic Improvement upon
  Stochastic Gradient Descent}.
\newblock working draft, accessed 2025,
  \url{https://ernestryu.com/papers/spi.pdf}.

\bibitem{SchmidtLeRoux2013}
M.~Schmidt and N.~Le Roux.
\newblock {Fast Convergence of Stochastic Gradient Descent under a Strong
  Growth Condition}, 2013.
\newblock Preprint, available at \url{https://arxiv.org/abs/1308.6370}.

\bibitem{Shapiro1994}
A.~Shapiro.
\newblock {Quantitative stability in stochastic programming}.
\newblock {\em Mathematical Programming}, 67(1):99--108, 1994.

\bibitem{Specker1949}
E.~Specker.
\newblock {Nicht konstruktiv beweisbare S\"atze der Analysis}.
\newblock {\em The Journal of Symbolic Logic}, 14:145--208, 1949.

\bibitem{Zhang2020}
H.~Zhang.
\newblock {New analysis of linear convergence of gradient-type methods via
  unifying error bound conditions}.
\newblock {\em Mathematical Programming}, 180(1):371--416, 2020.

\bibitem{ZhangSra2016}
H.~Zhang and S.~Sra.
\newblock {First-order methods for geodesically convex optimization}.
\newblock In V.~Feldman, A.~Rakhlin, and O.~Shamir, editors, {\em Proceedings
  of the 29th Annual Conference on Learning Theory (COLT 2016)}, volume~49 of
  {\em Proceedings of Machine Learning Research}, pages 1617--1638. PMLR, 2016.

\bibitem{ZhangMiDuSunWangLiZhou2025}
J.~R. Zhang, X.~Mi, G.~Du, Q.~Sun, S.~Wang, J.~Li, and W.~Zhou.
\newblock {A universal Banach--Bregman framework for stochastic iterations:
  Unifying stochastic mirror descent, learning and LLM training}, 2025.
\newblock Preprint, available at \url{https://arxiv.org/abs/2509.14216}.

\end{thebibliography}

\end{document}